\newcommand{\sg}[1]{%
{
    \exploregroups
    \IfSubStr{#1}{_}{%
        \StrBefore{#1}{_}[\@tempa]%
        \StrBehind{#1}{_}[\@tempb]%
        \sg{\@tempa}_\@tempb%
    }{%
        \IfSubStr{#1}{^}{%
            \StrBefore{#1}{^}[\@tempc]%
            \StrBehind{#1}{^}[\@tempd]%
            \sg{\@tempc}^\@tempd%
        }{%
            \IfSubStr{#1}{'}{%
                \StrBefore{#1}{'}[\@tempe]%
                \StrBehind{#1}{'}[\@tempf]%
                \sg{\@tempe}'\@tempf
            }{%
                \widehat{#1}
            }%
        }%
    }%
}
}%
\declaretheorem[numberwithin=section]{lemma}
\declaretheorem[sharenumber=lemma,
refname={prop.,props.},
Refname={Proposition,Propositions}
]{proposition}
\declaretheorem[sharenumber=lemma,
refname={cor.,cors.},
Refname={Corollary,Corollaries}
]{corollary}
\declaretheorem[style=definition,sharenumber=lemma,
refname={def.,defs.},
Refname={Definition,Definitions}
]{definition}
\declaretheorem[style=definition,sharenumber=lemma,
refname={convention,conventions},
Refname={Convention,Conventions}
]{convention}
\declaretheorem[style=remark,sharenumber=lemma,
refname={rmk.,rmks.},
Refname={Remark,Remarks}
]{remark}
\declaretheorem[style=definition,numbered=no,
title={Running Example}
]{example}
\crefname{section}{sec.}{sections}
\Crefname{section}{Section}{Sections}
\newlist{enumdef}{enumerate}{1} 
\setlist[enumdef]{label=\upshape(\arabic*), ref=\upshape\thedefinition(\arabic*)}
\crefname{enumdefi}{def.}{defs.}
\Crefname{enumdefi}{Definition}{Definitions}
\newlist{enumprop}{enumerate}{1} 
\setlist[enumprop]{label=\upshape(\arabic*), ref=\upshape\theproposition(\arabic*)}
\crefname{enumpropi}{prop.}{props.}
\Crefname{enumpropi}{Proposition}{Propositions}
\newcommand{\refitem}[1]{(\crtcrefcountervalue{#1})}
\newcommand{\ZZ}{\mathbb{Z}}
\newcommand{\RR}{\mathbb{R}}
\newcommand{\FF}{\mathbb{F}}
\newcommand{\ep}{\varepsilon}
\newcommand{\halfline}{[0, \infty)}
\newcommand{\skele}[2][0]{#2^{(#1)}}
\newcommand{\zskele}[1]{\skele[0]{#1}}
\newcommand{\cells}[2][0]{#2^{[#1]}}
\newcommand{\verts}[1]{\cells[0]{#1}}
\newcommand{\edges}[1]{\cells[1]{#1}}
\newcommand{\faces}[1]{\cells[2]{#1}}
\newcommand{\ncells}[2][n]{\mu_{#1}(#2)}
\newcommand{\nverts}[1]{\ncells[0]{#1}}
\newcommand{\nedges}[1]{\ncells[1]{#1}}
\newcommand{\subeq}{\subseteq}
\newcommand{\subneq}{\subsetneq}
\newcommand{\supeq}{\supseteq}
\newcommand{\term}[1]{\textit{\textbf{\boldmath #1}}}
\newcommand{\cl}[1]{\overline{#1}}
\newcommand{\from}{\colon}
\newcommand{\bd}{\partial}
\newcommand{\Ends}{\mathcal{E}}
\newcommand{\seq}[2][]{\set{#2}_{#1}}
\newcommand{\pmn}{{\pm n}}
\newcommand{\restr}[2]{#1|_{#2}}
\newcommand{\inv}{{-1}}
\newcommand{\ginv}{{g^{-1}}}
\newcommand{\gsinv}{{g_*^{-1}}}
\newcommand{\psigoo}{\psi_{g}^{\infty}}
\newcommand{\psigoopm}{\psi_{g^{\pm 1}}^{\infty}}
\newcommand{\homeo}{\approx}
\newcommand{\homot}{\simeq}
\newcommand{\moo}{{-\infty}}
\newcommand{\pmoo}{{\pm \infty}}
\newcommand{\pt}{{\mathord{*}}}
\newcommand{\trunc}[1]{{#1}^{\flat}}
\newcommand{\wt}{\widetilde}
\newcommand{\Gam}{\Gamma}
\newcommand{\gX}{\Xi}
\newcommand{\gY}{\Upsilon}
\newcommand{\id}{\mathrm{id}}
\newcommand{\FBC}[2][{}]{{#2 \rtimes_{#1} \ZZ}}
\NewDocumentCommand{\map}{o m g}{%
    \IfNoValueTF{#1}
    {#2 \to \IfNoValueTF{#3}{#2}{#3}}
    {#1 \from #2 \to \IfNoValueTF{#3}{#2}{#3}}%
}
\newcommand{\cU}{\mathcal{U}}
\DeclarePairedDelimiter{\set}{\lbrace}{\rbrace}
\DeclarePairedDelimiter{\abs}{\lvert}{\rvert}
\DeclarePairedDelimiter{\gen}{\langle}{\rangle}
\DeclareMathOperator{\interior}{int}
\DeclareMathOperator{\Maps}{Maps}
\DeclareMathOperator{\fr}{fr}
\DeclareMathOperator{\sgn}{sgn}
\DeclareMathOperator{\St}{St}
\DeclareMathOperator*{\bigast}{\scalebox{1.5}{\raisebox{-0.2ex}{$\ast$}}}
\newcommand{\pd}{\abs}
\begin{document}
\title{Embeddings of mapping tori for end-periodic graph maps}

\author[A. R. Smith]{Adam R. Smith}
\address{Department of Mathematics\\ Temple University}
\email{\href{mailto:smith.adam@temple.edu}{smith.adam@temple.edu}}

\begin{abstract}
    End-periodic homotopy equivalences of infinite, locally finite graphs serve as dimension-one analogs of the end-periodic automorphisms traditionally defined on infinite-type surfaces.
    We demonstrate that if $\Gamma$ is an infinite graph with finitely many ends, and $g \colon \Gamma \to \Gamma$ is end-periodic, then its mapping torus $Z_g$ admits a flowline-preserving homotopy equivalence with a finite 2-complex.
    With additional hypotheses on $g$, this compactified mapping torus subsequently embeds in the mapping torus of a homotopy equivalence on a finite-rank graph via a $\pi_1$-injective, flow-preserving map.
    We prove that every mapping class of $\Gamma$ arising from an end-periodic homotopy equivalence contains a representative whose mapping torus realizes such an embedding.
\end{abstract}

\maketitle

\setcounter{tocdepth}{2}
\tableofcontents

\section{Introduction}
End-periodicity is a dynamical behavior arising naturally from a variety of contexts within low dimensional topology. 
Given an infinite-type surface $S$, a homeomorphism $\map[f]{S}$ is said to be \emph{end-periodic} if it has a positive power $f^p$ under which every end of $S$ is either attracting or repelling. 
Said another way, $f$ is end-periodic if $f^p$ behaves like a nontrivial translation everywhere outside some compact subsurface.

Originating in unpublished work of Handel--Miller, the investigation of end-periodic automorphisms of infinite-type surfaces is an ongoing effort, 
with an extensive body of literature already devoted to the subject (we refer the reader to \cite{CantwellConlonFenley} for a comprehensive treatment of the theory).
The mapping tori of these objects are of particular interest, and have deep connections to the geometry and dynamics of foliated 3-manifolds.
An overview of the notable properties of end-periodic mapping tori can be found in \cite{Loving}; see also \cite{FieldKentLeiningerLoving}, \cite{FieldKimLeiningerLoving}, \cite{Whitfield}, and \cite{LandryMinskyTaylor} for various perspectives.

While the term \emph{end-periodic} has traditionally been reserved for automorphisms of infinite-type surfaces, 
the phenomenon of end-periodicity is by no means exclusive to this context.
Considering the long and successful tradition of analogizing surface behavior with patterns observed one dimension lower, 
it is perhaps unsurprising that a similar notion for maps on infinite graphs turns out to be quite interesting in its own right.
In this spirit, a self map $g$ of an infinite, locally finite graph $\Gamma$ will be deemed end-periodic (\cref{def:end-periodic}) if it restricts to a homeomorphism outside some finite core subgraph, and has a power under which each end of $\Gamma$ is either attracting or repelling.
Such conditions are intended to make end-periodic homotopy equivalences of graphs the analogs of end-periodic homeomorphisms of surfaces.

Compared to the several decades' development underlying the theory of end-periodic surface automorphisms, the study of end-periodic graph maps is still young.
In her thesis \cite{MeadowMacLeod}, Meadow-MacLeod introduced the definition and observed such maps naturally arise within mapping tori of finite graph maps.
He--Wu, adopting the same definition, show the existence of relative train track representatives for end-periodic homotopy equivalences \cite{HeWu}.
The present article contributes to this emerging body of work.
In it, we embark on the first comprehensive investigation of mapping tori for end-periodic graph maps.

\subsection{Main findings}
Our first result is a fundamental observation: the mapping torus of an end-periodic map admits a flow-preserving compactification.
\begin{restatable*}{theorem}{compactifiedMappingTorusTheorem}
    \label{thm:compactified-mapping-torus}
    Let $\Gamma$ be an infinite connected graph with finitely many ends, and \(\map[g]{\Gamma}\) an end-periodic map.
    The mapping torus $Z_g$ is homotopy equivalent to a finite 2-complex \(W_g\) via a flow-preserving embedding.
    In particular, \(W_g\) contains a pair of 1-subcomplexes \(\bd_+ W_g\) and \(\bd_- W_g\), 
    disjoint from one another and oppositely cooriented relative to the semiflow,
    such that $Z_g$ is homeomorphic to $W_g - (\bd_+ W_g \cup \bd_- W_g)$.
\end{restatable*}

The space $W_g$ constructed in \cref{thm:compactified-mapping-torus}, which we call the \emph{compactified mapping torus} of $g$, 
has precedent in the world of end-periodic surface homeomorphisms, where it is well-known that if $f$ is an end-periodic homeomorphism of an infinite-type surface, 
then its mapping torus admits a similar flowline-preserving compactification, and can be realized as the interior of a compact 3-manifold $N_f$ whose boundary is a finite union of closed surfaces \cite{Fenley97}.
Our analysis shows that certain features of the boundary of $N_f$ carry over to $W_g$ when $g$ is a homotopy equivalence;
for instance, Euler characteristic is conserved between $\bd_+ W_g$ and $\bd_- W_g$, 
and the inclusion of each boundary component is necessarily $\pi_1$-injective (see \cref{cor:euler-characteristic-preserved,lemma:boundary-inclusion-pi1-injective}).
While these mirror established facts about mapping tori of end-periodic surface homeomorphisms, the way we prove them is often quite different.
Notably, in the absence of a manifold structure on $W_g$, the most effective techniques tend to be of a combinatorial nature. 

Another important distinction between $Z_g$ and $Z_f$ has to do with their dynamic structures.
While the latter space is equipped with a global suspension flow as the mapping torus of a homeomorphism, 
the lack of a general inverse for $g$ means $Z_g$ can only be furnished with a semiflow.
This is a common challenge when moving from surfaces to graphs;
even if we assume $g$ is a homotopy equivalence, a homotopy inverse $g'$ is not unique, and may fail to posses the same topological properties as the original.
For example, even if $g$ is a proper map, it is not considered a \emph{proper homotopy equivalence} unless $g'$ can also be made proper, 
with $g g'$ and $g' g$ both properly homotopic to the identity.
While this condition is nontrivial in general (see \cite[Example 4.1]{AlgomKfirBestvina}), 
the assumption that $g$ is end-periodic gives more control over the structure of a homotopy inverse.
In particular, we show the following.

\begin{restatable*}{theorem}{homotopyInverseTheorem}
    \label{thm:homotopy-inverse}
    Let $\Gamma$ be an infinite connected graph. 
    For any end-periodic homotopy equivalence $\map[g]{\Gamma}$, there exists a homotopy inverse $\map[g']{\Gamma}$ for $g$ which is end-periodic, and restricts to $g^{-1}$ on a neighborhood of each end.
\end{restatable*}

Since end-periodic maps are proper by definition, it follows that end-periodic homotopy equivalences are automatically PHEs. 
Consequently, the proper homotopy class of every end-periodic homotopy equivalence $\map[g]{\Gamma}$ is an element of the mapping class group $\Maps(\Gamma)$ of $\Gamma$, 
defined as in \cite{AlgomKfirBestvina} to consist of proper homotopy equivalences of $\Gamma$ up to proper homotopy.
This gives us a notion of end-periodicity for mapping classes: we say $\zeta \in \Maps(\Gamma)$ is end-periodic if it has an end-periodic representative.
Our final result concerns embeddings of mapping tori for end-periodic homotopy equivalences:

\begin{restatable*}{theorem}{embeddingTheorem}
  \label{thm:embedding-thm}
  Suppose $\zeta$ is an end-periodic mapping class of an infinite connected graph with finitely many ends.
  There exists an end-periodic representative $g \in \zeta$, along with a finite graph $\Theta$ and homotopy equivalence $\map[f]{\Theta}$, 
  such that the mapping torus $Z_g$ embeds in $Z_f$ via a $\pi_1$-injective, flow-preserving map.
\end{restatable*}

The crux of the proof of \cref{thm:embedding-thm} is a construction which we call \emph{coupling}, 
the inspiration for which comes from the doubled mapping tori of Landry--Minsky--Taylor \cite[Section 3]{LandryMinskyTaylor}.
Any two compactified mapping tori $W_g$ and $W_{g'}$ whose boundaries are related by a coorientation-reversing homeomorphism 
$\map[h]{\bd_\pm W_g}{\bd_\mp W_{g'}}$ can be attached to form a space $M \coloneq W_g \sqcup_h W_{g'}$, which we call their \emph{$h$-couple}.
This is a finite 2-complex with a suspension semiflow $\psi$ descended from the natural semiflows on the original mapping tori, 
and in the case where $g$ and $g'$ are homotopy equivalences, we show the inclusion $W_g \to M$ must be $\pi_1$-injective.
With additional assumptions on the gluing map $h$, one can produce a finite graph $\Theta$ embedded in $M$, 
transverse to $\psi$, and such that its first return map $\map[f]{\Theta}$ is a homotopy equivalence;
we say that $g$ and $g'$ are \emph{compatible} if the boundaries of their compactified mapping tori can be joined by such an $h$.
Our construction ensures $M$ is homeomorphic to $Z_f$, 
so the result follows once we show every end-periodic mapping class has a representative that belongs to a compatible end-periodic pair.

\subsection{Consequences and further questions}
As is often the case when analogizing the behavior of graphs and surfaces, these topological findings have algebraic implications.
Indeed, one of the original motivations for this work was group theoretic conjecture of Chong--Wise, 
recently proved by Linton in \cite{Linton:EmbeddingFBCGroups}, 
which posits that every finitely generated free-by-cyclic group arises as a subgroup of some (fg free)-by-cyclic group \cite[Conjecture 1.2]{ChongWise}.
End-periodic mapping classes represent a natural starting point when considering such objects:
if $\Gamma$ is an infinite graph, and $\map[g]{\Gamma}$ is an end-periodic homotopy equivalence, 
then $\pi_1(Z_g)$ is a free-by-cyclic group, finitely generated by \cref{thm:compactified-mapping-torus}, and isomorphic to $\FBC[g_*]{F_\infty}$ provided $\Gamma$ has at least one end accumulated by loops.
From this perspective, \cref{thm:embedding-thm} shows any fg free-by-cyclic group whose monodromy is induced by the $\pi_1$ action of an end-periodic map can indeed be embedded
in the (fg free)-by-cyclic fundamental group of some finite mapping torus. 
This confirms the conjecture for a large class of groups, though it does not clarify which fg free-by-cyclic groups actually arise in such a manner.

In his paper, Linton removes this uncertainty by proving that a free-by-cyclic group $G = \FBC[\psi]{\FF}$ is finitely generated if and only if $\FF$ has a free product decomposition 
$\FF = A \ast \left(\bigast_{i \in \ZZ} C_i \right)$, where $C_i = \psi^{i}(C_0)$, and where $A$ and $C_0$ are finitely generated \cite[Theorem 1.1]{Linton:EmbeddingFBCGroups}.
This remarkable fact implies the monodromy of every fg group $G = \FBC{F_\infty}$ has an end-periodic topological representative of the following simple description.
Letting $r$ and $s$ denote the rank of $A$ and $C_0$ respectively, its domain is a 2-ended graph,
homeomorphic to a copy of $\RR$ with $s$ circles $c_n^1, \dots, c_n^s$ wedged on at every integer point $n$, 
and an additional $r$ loops $a_1, \dots, a_r$ attached at $n = 0$.
Fixing a basis for $G$ and identifying $\seq{a_i}$ and $\seq{c_0^i}$ with generators of $A$ and $C_0$ respectively, 
the map acts by translating $\RR$ forward one unit, sending $c^i_n$ to $c^i_{n+1}$, and taking $a_i$ to a circuit based at 1 representing $\psi(a_i)$.

This raises several interesting questions about the relationship between fg free-by-cyclic groups and end-periodic mapping classes of infinite graphs.
First, it is clear that two end-periodic mapping tori with the same fundamental group, while homotopy equivalent, need not be properly homotopy equivalent; 
the existence of end-periodic homotopy equivalences of graphs with more than two ends confirms this.
Given a graph of a certain proper homotopy type, one might then ask: which fg $\FBC{F_\infty}$s can be realized by mapping tori of elements from its end-periodic mapping classes?
Alternatively, if $\psi$ is an outer automorphism of $F_\infty$ whose mapping torus is finitely generated, 
can additional algebraic and dynamical properties be leveraged to say more about its end-periodic representatives?
Untangling these connections may show promise for refining of Linton's result (e.g.\ \cite[Question 1.4]{Linton:EmbeddingFBCGroups}).

The geometry of end-periodic mapping tori is also inherently interesting.
By work of Mutanguha \cite[Corollary 5.3.6]{Mutanguha21}, one deduces that the mapping torus of an end-periodic map $g$ is word-hyperbolic if and only if $g$ is atoroidal,
extending Brinkmann's famous result for graph maps of finite rank \cite{Brinkmann}.
Another recent theorem of Linton \cite{Linton:GeometryOfMappingTori} shows that if $g_*$ can be made fully irreducible, $\pi_1(Z_g)$ will be not only hyperbolic, but locally quasi-convex. 
This is a strong property, and devising examples of such maps suggests a way to study these geometrically interesting free-by-cyclic groups from a topological standpoint.

\section{End-periodic graph maps}
\subsection{Cell complexes, graphs, and graph maps}
Let $X$ be a CW complex with $n$-skeleton $\skele[n]{X}$.
We introduce notation $\cells[n]{X}$ for the set of (open) $n$-cells in $X$, elements of which are discrete points if $n = 0$,
or disjoint subsets homeomorphic to open $n$-balls otherwise (here we note the distinction between $\verts{X}$ and $\zskele{X}$ is entirely semantic).
Given an arbitrary subset $U$ of $X$, the collection of $n$-cells from $X$ that lie in $U$ shall likewise be designated $\cells[n]{U} \coloneq \{e \in \cells[n]{X} : e \subeq U\}$.
We write $\ncells[n]{U} \coloneq \abs{\cells[k]{U}}$ for the number of such $n$-cells, noting that a finite-dimensional complex $X = \skele[m]{X}$ is called \term{finite} if $\ncells[n]{X} < \infty$ for all $n$, and \term{infinite} otherwise.
We refer to any closed subset of $X$ which can be realized as a union of cells as a \term{subcomplex}, 
and denote by $\sg{U}$ the largest subcomplex of $X$ contained in $U$ (which is empty if $U$ contains no 0-cells).
Other common notation used throughout the paper includes $\interior U$ for the interior of $U$, $\cl{U}$ for its closure in $X$, and $\fr U$ for the frontier $\cl{U} - \interior U$. 
Note that the term \emph{boundary} will be reserved for distinguished subcomplexes of particular constructions,
to be defined over the course of the paper.

For our purposes, a \term{graph} is a locally finite 1-complex $\Gamma$ 
whose 0-cells $\verts{\Gamma}$ are called \term{vertices} and 1-cells $\edges{\Gamma}$ are called \term{edges}.
Subcomplexes of $\Gamma$ are called \term{subgraphs}.
We use the term \term{path} for any continuous map $\map[\rho]{[0, 1]}{\Gamma}$, and \term{loop} for a path whose image at $0$ and $1$ coincides.
The path $t \mapsto \rho(1 - t)$ which traverses $\rho$ in reverse is designated $\rho^{-1}$.
An \term{oriented edge} is a 1-cell $e \in \edges{\Gamma}$ together with a choice of homeomorphism $\map[\Phi_e]{(0, 1)}{e}$ 
whose extension to $[0, 1] \to \cl{e}$ is a path with endpoints in $\verts{\Gamma}$.

The \term{initial vertex} of $e$ is the point $\Phi_e(0)$, and its \term{terminal vertex} is $\Phi_e(1)$; 
we denote these by $\bd_0 e$ and $\bd_1 e$ respectively.
In practice, we will almost always conflate $e$ and $\Phi_e$, writing $e^{-1}$ for the edge $e$ equipped with the orientation of $\Phi_e^{-1}$, 
and $e_1 \cdot e_2$ for the concatenation $\Phi_{e_1} \cdot \Phi_{e_2}$.
Given a sequence $e_1, \cdots, e_r$ of oriented edges such that $\bd_1 e_i = \bd_0 e_{i + 1}$ for all $i \in \{1, \cdots, r - 1\}$, 
we call $\rho = e_1 \cdot e_2 \cdot \cdots \cdot e_r$ an \term{edge path} of length $r$ with initial vertex 
$\bd_0 \rho \coloneq \bd_0 e_1$ and terminal vertex $\bd_1 \rho \coloneq \bd_1 e_r$.
An edge path is said to be \term{reduced} if it contains no subpaths of the form $e \cdot e^{-1}$, 
and an edge path of length 0 is simply a constant function with image a vertex.

A \term{graph map} is a continuous function $g$ between graphs that sends vertices to vertices and edges to edge paths.
Such a map is called \term{combinatorial} if it takes edges to edges (that is, edge paths of length one), 
and an \term{isomorphism} if it is both combinatorial and a homeomorphism.
We say that $g$ \term{collapses} an edge $e$ if $g(e)$ has length 0.
In the common event that $e$ and $\Phi_e$ are not distinguished, 
we similarly interpret $g(e)$ as the edge path $g \, \Phi_e$.

\subsection{Ends and end-periodicity}
We now introduce the primary objects of study in this paper: end-periodic self maps of infinite graphs.
Let $\Gamma$ be an infinite graph (locally finite by definition).
A sequence of nonempty nested subsets $U_1 \supeq U_2 \supeq \cdots$ of $\Gamma$ is said to be a \term{neighborhood basis} for an end 
if each $U_n$ is a connected component of $\Gamma - K_n$ for some exhaustion of $\Gamma$ by compact subsets $K_1 \subeq K_2 \subeq \cdots$.
Note that this implies $\bigcap_{n} U_n = \emptyset$.
An \term{end} of $\Gamma$ is then an equivalence class of neighborhood bases, where two sequences $\{U_n\}$ and $\{V_n\}$ represent the same end if, for each $n$, there exist $i$ and $j$ such that $U_i \subeq V_n$ and $V_j \subeq U_n$.
The set of ends of $\Gamma$ is denoted $\Ends(\Gamma)$.
An open subset $U$ of $\Gamma$ is said to be a \term{neighborhood} of an end $E$ if it contains the tail of every neighborhood basis for $E$, 
but meets a neighborhood basis of any other end in only finitely many terms.

\begin{definition}[End-periodic maps]
    \label{def:end-periodic}
    Let $\Gamma$ be an infinite graph.
    A graph map $g : \Gamma \to \Gamma$ is called \term{end-periodic} if there exists a neighborhood $U_E$ of each end $E$ such that $g$ restricted to the largest subgraph contained in $\bigcup_{E \in \Ends(\Gamma)} U_E$ is a homeomorphism sending edges to edges, and for some $p > 0$, every $U_E$ satisfies one of the following:
    \begin{itemize}
        \item $g^p(U_E) \subneq U_E$, and $\seq[n \ge 0]{g^{p n}(U_E)}$ is a neighborhood basis for $E$, or
        \item $g^{-p} (U_E) \subneq U_E$, and $\seq[n \ge 0]{g^{-p n}(U_E)}$ is a neighborhood basis for $E$.
    \end{itemize}
\end{definition}

An end neighborhood $U_E$ with the above properties is called a \term{nesting neighborhood} for $E$, 
with the corresponding end labeled as \term{attracting} or \term{repelling} depending on whether the first or second condition is satisfied.
We will use $\Ends_+(\Gamma)$ and $\Ends_-(\Gamma)$ to denote the sets of attracting and repelling ends of $\Gamma$ with respect to a fixed end-periodic map.
The least integer $p$ for which the above conditions are satisfied is called the \term{period} of $g$.

\subsubsection{The action of an end-periodic map on ends}
Let $\map[g]{\Gamma}$ be an end-periodic graph map of period $p$.
Since it behaves like a homeomorphism in a neighborhood of each end, $g$ can be seen to act on $\Ends(\Gamma)$ by sending each end $E$ to the unique end $g(E)$ 
such that $g(U_E) \cap U_{g(E)}$ contains an infinite subgraph.
Since $g^p(E) = E$ for all $E$ by end-periodicity, it follows that $g$ induces an order-$p$ permutation on the ends of its domain.
We write $\Ends(\Gamma) / g$ for the set of orbits under the action of $\gen{\restr{g}{\Ends(\Gamma)}}$.
As we shall see (\cref{prop:wellchosen-system-exists}), this action preserves attracting and repelling ends, 
so $\Ends_+(\Gamma) / g$ and $\Ends_-(\Gamma) / g$ are also well-defined.

The \term{period} of an end $E$ under $g$, denoted $\pd{E}$, is defined to be the least positive integer such that $g^{\pd{E}}(E) = E$.
Since this is the cardinality of its orbit in $\Ends(\Gamma) / g$, the period of each end divides the period $p$ of $g$.
We extend this notion to vertices and edges of $\Gamma$ by declaring the \term{period} 
of any point or subset contained in a nesting neighborhood for $g$ to be the period of its associated end.

\subsubsection{Choosing nesting neighborhoods}
While the definition of end-periodicity guarantees the existence of nesting neighborhoods around the ends of $\Gamma$, 
it says little else about their properties.
It is easy to see that every attracting or repelling end has infinitely many valid nesting neighborhoods; 
indeed, every translate of $U_E$ in its corresponding neighborhood basis also represents a nesting neighborhood for $E$.
In light of this flexibility, it will be useful to introduce some additional properties that a 
practical choice of nesting neighborhoods for an end-periodic map can be expected to satisfy.

\begin{definition}[Well-chosen nesting neighborhoods] \label{def:wellchosen}
    Let $\Gamma$ be a connected graph with finitely many ends, and $\map[g]{\Gamma}$ an end-periodic graph map.
    A collection $\{U_E : E \in \Ends(\Gamma)\}$ of nesting neighborhoods for $g$ is said to be \term{well-chosen} if it satisfies all of the following properties:
    \begin{enumdef}
        \item\label{def:wellchosen.disjoint} Distinct nesting neighborhoods have disjoint closures. 
        \item\label{def:wellchosen.subgraph} The closure of each nesting neighborhood is a subgraph of $\Gamma$. 
        \item\label{def:wellchosen.homeo} Both $g$ and $g^{-1}$ restrict to isomorphisms on the closures of nesting neighborhoods. 
        \item\label{def:wellchosen.leading-ends} Every orbit in $\Ends(\Gamma) / g$ contains a unique end $E$ such that either 
	        \begin{itemize}
	            \item $E$ is attracting, and $g^k(\cl{U_E}) = \cl{U_{g^k(E)}}$ for all $0 \le k < \pd{E}$, or
	            \item $E$ is repelling, and $g^{-k}(\cl{U_E}) = \cl{U_{g^{-k}(E)}}$ for all $0 \le k < \pd{E}$.
	        \end{itemize}
    \end{enumdef}
\end{definition}

We call the end $E$ from \cref{def:wellchosen.leading-ends} the \term{leading end} of its orbit, 
and the associated neighborhood $U_E$ a \term{leading neighborhood}.
Every well-chosen system of nesting neighborhoods determines a collection of leading ends, 
which together form a complete set of representatives for the orbits of $\Ends(\Gamma) / g$.
Although such a choice of representatives is not a canonical feature of $g$, 
it will still be convenient use $\Ends'(\Gamma)$, $\Ends'_+(\Gamma)$, and $\Ends'_-(\Gamma)$ as shorthand for various subsets of leading ends once nesting neighborhoods have been fixed.
The next proposition confirms that such nesting neighborhoods exist in general.

\begin{proposition}
    \label{prop:wellchosen-system-exists}
    Let $\Gamma$ be an infinite graph with finitely many ends.
    Every end-periodic map $\map[g]{\Gamma}$ admits a well-chosen system of nesting neighborhoods.
\end{proposition}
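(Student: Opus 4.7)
The plan is to start with any family of nesting neighborhoods $\{V_E\}_E$ furnished by \cref{def:end-periodic}, and refine them to meet each of the four conditions of \cref{def:wellchosen} in turn. Throughout, write $\Gamma_0$ for the largest subgraph contained in $\bigcup_E V_E$, on which $g$ restricts to a combinatorial isomorphism by definition.

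First I would perform an initial deepening: replace each $V_E$ by a sufficiently deep iterate $g^{\pm pn}(V_E)$, with the sign chosen according to whether $E$ is attracting or repelling. Because $\Ends(\Gamma)$ is finite and distinct ends have disjoint neighborhood bases, finitely many deepenings suffice to ensure that the $\overline{V_E}$ are pairwise disjoint; in addition, each $V_E$ may be taken at the outset to be a component of $\Gamma$ minus a finite subgraph, so that \refitem{def:wellchosen.subgraph} holds automatically for its closure. I would then shrink further so that $g(V_E) \subseteq V_{g(E)}$ for every $E$, which is achievable because the iterates $g^{pn}(V_E)$ form a neighborhood basis for $E$, whence their images under $g$ eventually sit inside $V_{g(E)}$. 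A direct consequence is that $g(\Gamma_0) \subseteq \Gamma_0$ and that $g$ preserves attracting/repelling status on $\Ends(\Gamma)$ (the nesting relation transforms covariantly under $g$), justifying the use of $\Ends_{\pm}(\Gamma) / g$.

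Next, to enforce \refitem{def:wellchosen.leading-ends}, I would pick one leading end $E_\mathcal{O}$ per orbit in $\Ends(\Gamma) / g$, set $U_E := g^{\pm p N}(V_E)$ for a common large integer $N$ at each leading end $E$, and define $U_{g^k(E)} := g^k(U_E)$ for $0 \le k < \pd{E}$. This propagation around the orbit is well-defined because $g^k$ restricts to an isomorphism on $\Gamma_0$, and by construction it yields the required equalities $g^k(\overline{U_E}) = \overline{U_{g^k(E)}}$.

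The main obstacle is then calibrating $N$ large enough to force \refitem{def:wellchosen.disjoint} and \refitem{def:wellchosen.homeo} simultaneously. Disjointness of closures across all orbits is a finite collection of inclusions, each of which holds for large $N$ by the neighborhood-basis property. The isomorphism condition splits in two: for $g$ itself, it suffices that $\overline{U_{g^k(E)}} \subseteq \Gamma_0$, automatic once $N$ is large; for $g^{-1}$, it requires that each preimage $g^{-1}(\overline{U_{g^k(E)}})$ also lie in $\Gamma_0$. The delicate case is $k = 0$, where $g^{-1}(\overline{U_E})$ is a neighborhood of $g^{\pd{E} - 1}(E)$ strictly larger than $\overline{U_{g^{\pd{E} - 1}(E)}}$ in the nesting tower; one further factor of $g^{\pm p}$ in the deepening pushes this preimage back into $\Gamma_0$. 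Since there are only finitely many leading ends, a single $N$ simultaneously satisfies all the required constraints, and the four clauses of \cref{def:wellchosen} then follow by inspection.
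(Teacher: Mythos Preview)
Your proposal is correct and follows essentially the same strategy as the paper: repeatedly deepen the nesting neighborhoods by applying powers of $g^{\pm p}$ to achieve disjointness and the isomorphism conditions, then pick a leading end in each orbit and propagate its neighborhood via $g^k$ to obtain \refitem{def:wellchosen.leading-ends}. The only cosmetic difference is that you arrange \refitem{def:wellchosen.subgraph} by starting with complement-of-finite-subgraph neighborhoods, whereas the paper achieves it by an explicit edge-expansion after deepening; both work.
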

\begin{proof}
    For the following proof, $E$ will represent an attracting end of $\Gamma$.
    In the context of a repelling end, $g$ has a well-defined inverse which may be substituted to obtain an analogous argument.

    Since $g$ is end-periodic, there exists a positive integer $p$ and collection of nesting neighborhoods $\{U_E : E \in \Ends(\Gamma)\}$ satisfying the conditions of \cref{def:end-periodic}.
    As a preliminary observation, we note that if $U_E$ is a nesting neighborhood for $E$, then so is each element of the neighborhood basis $\seq[n \ge 0]{g^{p n}(U_E)}$.
    Since the terms of neighborhood bases of distinct ends are eventually disjoint, and $\Ends(\Gamma)$ is finite, 
    we may replace each $U_E$ with its image under a sufficiently high power of $g^p$ to ensure that all nesting neighborhoods in this collection are pairwise disjoint.

    Parts \refitem{def:wellchosen.subgraph} and \refitem{def:wellchosen.homeo} both follow from the observation that $g^{p N}(U_E) \subeq \sg{U_E}$ for some sufficiently large integer $N$, which exists because the difference $U_E - \sg{U_E}$ is bounded, and therefore contained in one of the compact sets $K_N$ whose complementary components define the neighborhood basis.
    Replacing $U_E$ with $g^{p (N + 1)}(U_E)$ guarantees $g$ as well as $g^{-1}$ restrict to homeomorphisms on a subgraph of $\Gamma$ containing $U_E$.
    As such, we may expand $U_E$ by the union of open edges whose intersection with $U_E - \sg{U}_E$ is nonempty to ensure $\cl{U_E}$ is a subgraph.

    With property \refitem{def:wellchosen.homeo} established, it becomes clear that $g$ preserves attracting and repelling ends of $\Gamma$.
    Indeed, if $E$ is an attracting end with associated neighborhood basis $\seq[n \ge 0]{g^{n p}(U_E)}$, 
    the homeomorphic images of these sets under $g$ constitute a neighborhood basis $\seq[n \ge 0]{g^{n p + 1}(U_E)}$ for $g(E)$, 
    which must therefore be attracting.
    By once again replacing $U_E$ with its image under a power of $g^p$, 
    we may assume $g^k(\cl{U_E}) \subeq \cl{U_{g^k (E)}}$ holds for all $1 \le k < \pd{E}$, 
    after which $\{U_E, g(U_E), \cdots, g^{\pd{E} - 1}(U_E)\}$ represent nesting neighborhoods for ends in the $g$-orbit of $E$ that satisfy part \refitem{def:wellchosen.leading-ends}.
    Repeating this process for each orbit in $\Ends_+(\Gamma) / g$, 
    and doing likewise with $g^{-1}$ for orbits of repelling ends, the resulting nesting neighborhoods form a well-chosen system.
\end{proof}

\begin{example}
    \begin{figure}[htb]
        \centering
        \includegraphics[width=\textwidth]{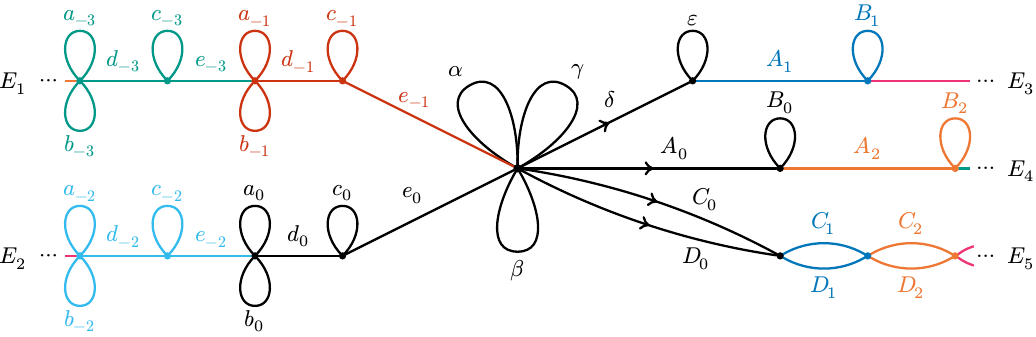}
        \captionsetup{singlelinecheck=off}
        \caption[.]{
            Let $\Gamma$ be the infinite, 5-ended graph depicted above, and $\Gamma_0$ the finite subgraph whose vertices and edges are shaded black.
            We define $g$ to act on a subset of edges in $\Gamma$ by
            \begin{equation*}
                g: \left\{
                \begin{aligned}
                    a_0 & \mapsto \alpha              & \alpha   & \mapsto \alpha \cdot \delta \cdot \epsilon \cdot \delta^{-1} \\
                    b_0 & \mapsto \beta               & \beta    & \mapsto C_0 \cdot D_0^{-1}                                   \\
                    c_0 & \mapsto \gamma              & \gamma   & \mapsto A_0 \cdot B_0 \cdot A_0^{-1}                         \\
                    d_0 & \mapsto \alpha \cdot \gamma & \delta   & \mapsto A_0                                                  \\
                    e_0 & \mapsto \beta               & \epsilon & \mapsto B_0
                \end{aligned}
                \right.
            \end{equation*}
            while mapping the others $x_{-i} \mapsto x_{-i + 1}$ and $X_{i - 1} \mapsto X_i$ for all positive $i$, where $x$ can be any one of the symbols $\{a, b, c, d, e\}$, and $X$ is likewise in $\{A, B, C, D\}$.
        }
        \label{fig:Gamma}
    \end{figure}
    Consider the map $\map[g]{\Gamma}$ defined in \cref{fig:Gamma}, which will serve as a running example throughout the paper.
    This is an end-periodic homotopy equivalence of period $2$, with repelling ends $\Ends_-(\Gamma) = \{E_1, E_2\}$ 
    and attracting ends $\Ends_+(\Gamma) =\{E_3, E_4, E_5\}$.
    The components of $\Gamma - \Gamma_0$ form a system of well-chosen nesting neighborhoods.
    Because $g$ swaps $E_1$ and $E_2$, it follows that $\pd{E_1} = \pd{E_2} = 2$, and $\Ends_-(\Gamma) / g$ consists of a single orbit with leading end $E_1$.
    On the other hand, our attracting ends with period $\pd{E_3} = \pd{E_4} = 2$ and $\pd{E_5} = 1$ are partitioned into two orbits $\Ends_+(\Gamma) / g = \{\{E_3, E_4\}, \{E_5\}\}$, whose leading ends are $E_3$ and $E_5$.
\end{example}

\subsection{Decompositions induced by end-periodic maps}
Once nesting neighborhoods are fixed, an end-periodic map can be seen to induce several decompositions of its underlying graph.
The most immediate of these is the decomposition into a core and nesting domains.

\begin{convention} \label{conv:g}
    For the remainder of the paper, unless otherwise noted, $\Gamma$ will represent an infinite connected graph with finitely many ends, 
    and $\map[g]{\Gamma}$ an end-periodic graph map.
    Consistent with previous usages, we write $U_E$ to refer to a generic nesting neighborhood for some end $E$ under $g$.
    It is implicit that any such $U_E$ belongs to a system of nesting neighborhoods $\{U_E : E \in \Ends(\Gamma)\}$, which, 
    in light of \cref{prop:wellchosen-system-exists}, we assume to be well-chosen.
\end{convention}

\begin{definition}[Nesting domains and core]
    Fix a set $\cU = \set{U_E : E \in \Ends(\Gamma)}$ of well-chosen nesting neighborhoods for $g$.
    The \term{nesting domains} $\Gamma_+$ and $\Gamma_-$ associated to $\cU$ are the unions
    \begin{align*}
        \Gamma_+ & \coloneq \bigcup_{E \in \Ends_+(\Gamma)} U_E, & \Gamma_- & \coloneq \bigcup_{E \in \Ends_-(\Gamma)} U_E.
    \end{align*}
    The \term{core} for $g$ assocated to $\cU$ is the complement of these nesting domains:
    \[
        \Gamma_0 \coloneq \Gamma - (\Gamma_+ \cup \Gamma_-).
    \]
\end{definition}
Since nesting neighborhoods are well-chosen, \cref{def:wellchosen.subgraph} implies $\cl{\Gamma_+}$ and $\cl{\Gamma_-}$ are subgraphs,
the components of which must be closures of nesting neighborhoods by \cref{def:wellchosen.disjoint}.
The core $\Gamma_0$ is also a subgraph: finite and nonempty because $\Gamma_+$ and $\Gamma_-$ have disjoint closures while containing a neighborhood of every end, 
and connected because each $U_E$ is non-separating, being part of a neighborhood basis.

\begin{remark}
    In general, we will refer to any finite subgraph of $\Gamma$ as a \term{core} for $g$ if its complement is a (disjoint) union of well-chosen nesting neighborhoods.
    While specifying a core is often more expedient than listing nesting neighborhoods, the notions are dual and can be used interchangeably.
\end{remark}

\begin{definition}[Junctures]
    The \term{positive and negative junctures} of a core $\Gamma_0$ are the sets 
    \begin{align*}
        \bd_+ \Gamma_0 & \coloneq \fr \Gamma_+ = \Gamma_0 \cap \cl{\Gamma_+}, & \bd_- \Gamma_0 & \coloneq \fr \Gamma_- = \Gamma_0 \cap \cl{\Gamma_-}.
    \end{align*}
\end{definition}
Since $\cl{\Gamma_\pm}$ is a subgraph while $\Gamma_0 \cap \Gamma_\pm = \emptyset$, 
each juncture is a (finite) subset of $\verts{\Gamma_0}$ whose elements we call \term{juncture vertices}.

\subsubsection{Block decompositions}
A core for $g$ is naturally associated with a partition of the underlying graph 
that comes from translating nesting domains under successive powers of $g$ and $g^{-1}$.
The next definition makes this explicit.

\begin{definition}[Block decomposition]
    Let $\Gamma_0$ be a for $g$.
    The \term{block decomposition} associated to $\Gamma_0$ is the bi-infinite sequence of subsets $\seq[n \in \ZZ]{B_n}$ obtained by setting
    \begin{align*}
        B_0 & \coloneq \Gamma_0, & B_1 & \coloneq \Gamma_+ - g(\Gamma_+), & B_{-1} & \coloneq \Gamma_- - g^{-1}(\Gamma_-),
    \end{align*}
    and then inductively defining $B_\pmn \subeq \Gamma_\pm$ for all $n > 1$ to be the $(n - 1)$th translate of $B_{\pm 1}$ under $g^{\pm 1}$:
    \begin{align*}
        B_n & \coloneq g^{n - 1}(B_1), & B_{-n} & \coloneq g^{-n + 1}(\Gamma_-).
    \end{align*}
    We refer to each $B_n$ as a \term{block}, with $B_0$ being the \term{zero block}, and others called \term{positive or negative blocks} depending on the sign of $n$.
\end{definition}

\begin{remark}
    Since the restriction $\restr{g}{\Gamma_\pm}$ is a homeomorphism by end-periodicity, each nonzero block is homeomorphic to either $B_1$ or $B_{-1}$ depending on its sign.
    Unwrapping the inductive definition above, one obtains the following equivalent characterization of nonzero blocks $B_\pmn$, $n > 0$:
    \[
        B_n \coloneq g^{n - 1}(\Gamma_+) - g^n(\Gamma_+), \hspace{6em} B_{-n} \coloneq g^{-n + 1}(\Gamma_-) - g^{-n}(\Gamma_-).
    \]
    As $\Gamma$ is connected, we note that each nonzero block is neither open nor closed.
\end{remark}

Block decompositions are a key tool for understanding the combinatorial structure of $\Gamma$, 
and the following properties, which come easily from the definition, will be used abundantly in subsequent sections.

\begin{proposition}
    Let $\seq[n \in \ZZ]{B_n}$ be the block decomposition associated to a well-chosen core for $g$.
    Then the following hold.
    \begin{enumprop}
        \item \label{prop:blockprops.closure-subgraph} $\cl{B_n}$ is a finite subgraph of $\Gamma$ for all $n \in \ZZ$.
        \item \label{prop:blockprops.decomposition} $\Gamma_+ = \bigcup_{n \ge 1} B_n$ and $\Gamma_- = \bigcup_{n \ge 1} B_{-n}$, each a disjoint union. 
            Thus, $\Gamma = \bigcup_{n \in \ZZ} B_n$. \null\hfill\qedsymbol
    \end{enumprop}
\end{proposition}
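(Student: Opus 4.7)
The plan is to derive both parts by analyzing how $g$ acts on the partition $\Gamma_+ = \bigsqcup_E U_E$ via the orbit structure of the attracting ends; the negative side is symmetric under replacing $g$ with $g^{-1}$ and $\Gamma_+$ with $\Gamma_-$. First I would unwind \cref{def:wellchosen} to describe $g(\Gamma_+)$ explicitly: for each orbit $\{E_0, g(E_0), \ldots, g^{q-1}(E_0)\}$ of attracting ends with leading end $E_0$ and period $q = \pd{E_0}$, the well-chosen property forces $g$ to cyclically shift the nesting neighborhoods, sending $U_{g^k(E_0)} \to U_{g^{k+1}(E_0)}$ for $0 \le k < q - 1$, while $g(U_{g^{q-1}(E_0)}) = g^q(U_{E_0}) \subneq U_{E_0}$, with strictness ultimately traceable to $g^p(U_{E_0}) \subneq U_{E_0}$ and $q \mid p$. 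Summing over orbits and cancelling yields
\[
    B_1 = \Gamma_+ - g(\Gamma_+) = \bigsqcup_{E_0 \in \Ends'_+(\Gamma)} \bigl( U_{E_0} - g^{\pd{E_0}}(U_{E_0}) \bigr),
\]
and in particular $g(\Gamma_+) \subneq \Gamma_+$.

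Part (2) then follows by iteration. The strict containment gives a descending chain $\Gamma_+ \supsetneq g(\Gamma_+) \supsetneq g^2(\Gamma_+) \supsetneq \cdots$, whose successive differences $B_n = g^{n-1}(\Gamma_+) - g^n(\Gamma_+)$ are disjoint by construction and telescope to $\Gamma_+ - \bigcap_{n \ge 0} g^n(\Gamma_+)$. This intersection is empty: a point $x$ of $\Gamma_+$ lying in every $g^n(\Gamma_+)$ must belong to a single $U_E$ for all $n$ (by disjointness of the $U_{E'}$ and the fact that $g^n(U_{E'}) \subeq U_{g^n(E')}$), so specializing to $n = pk$ and using $g^p(E) = E$ gives $x \in \bigcap_k g^{pk}(U_E) = \emptyset$, the last equality by the neighborhood basis condition. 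The symmetric statement for $\Gamma_-$ and the partition $\Gamma = \Gamma_0 \sqcup \Gamma_+ \sqcup \Gamma_-$ then give $\Gamma = \bigsqcup_{n \in \ZZ} B_n$.

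For part (1), $B_0 = \Gamma_0$ is finite by construction, and it suffices to show $\cl{B_1}$ is a finite subgraph; the case $n > 1$ follows since $B_n = g^{n-1}(B_1)$ and $g$ restricts to a graph isomorphism on $\cl{\Gamma_+}$ by \cref{def:wellchosen}, so $\cl{B_n} = g^{n-1}(\cl{B_1})$ is the isomorphic image of a finite subgraph. Each summand $U_{E_0} - g^{\pd{E_0}}(U_{E_0})$ is bounded: any sequence escaping to infinity within $\cl{U_{E_0}}$ accumulates at its unique end $E_0$, hence eventually lies in every element of the neighborhood basis $\{g^{pk}(U_{E_0})\}_k$, in particular inside $g^{\pd{E_0}}(U_{E_0})$. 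Local finiteness of $\Gamma$ combined with the subgraph structure of $\cl{U_{E_0}}$ makes the closure of each summand a finite subgraph, and finiteness of $\Ends'_+(\Gamma)$ completes the argument; the negative blocks are handled identically.

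The main technical obstacle is the first step: rigorously deducing $g^q(U_{E_0}) \subneq U_{E_0}$ for leading attracting ends with period $q \mid p$, which amounts to verifying that the nesting neighborhood adjustments in the proof of \cref{prop:wellchosen-system-exists} are deep enough that a single trip of $g$ around the orbit lands strictly inside $U_{E_0}$. Once this identification of $B_1$ is pinned down, the telescoping decomposition and the boundedness estimate are routine consequences of the neighborhood basis property.
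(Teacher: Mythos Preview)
The paper gives no proof here: the proposition carries a \qedsymbol\ inside the statement and is introduced as coming ``easily from the definition.'' Your argument is correct and supplies the details the paper omits; the orbit-by-orbit description of $B_1$ you extract is precisely what the paper later proves as \cref{prop:Bn-components}, by the same cancellation over leading ends.

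Your closing caveat is well placed and worth sharpening. Every step---disjointness of the $B_n$, the telescoping, and even your boundedness argument for part~(1), where you assert that a sequence escaping to $E_0$ eventually lies in $g^{\pd{E_0}}(U_{E_0})$ though this set is not literally a member of the basis $\{g^{pk}(U_{E_0})\}_k$ when $\pd{E_0} < p$---rests on the containment $g^{\pd{E_0}}(U_{E_0}) \subseteq U_{E_0}$, which \cref{def:wellchosen} does not directly assert. The paper elides this point entirely. Note also that your suggested fix of ``going deep enough'' does not work as stated: replacing $U_{E_0}$ by $g^{pN}(U_{E_0})$ and conjugating by the homeomorphism $g^{-pN}$ returns the original condition unchanged. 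One clean repair is to enlarge instead, setting $\widetilde U_{E_0} = \bigcup_{j=0}^{m-1} g^{j\pd{E_0}}(U_{E_0})$ with $p = m\,\pd{E_0}$; this is a connected neighborhood of $E_0$ (each term is, and all contain a common tail of the basis), and $g^{\pd{E_0}}(\widetilde U_{E_0}) \subseteq \widetilde U_{E_0}$ follows directly from $g^p(U_{E_0}) \subseteq U_{E_0}$.
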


\begin{example}
    Consider the map $g$ defined in \cref{fig:Gamma}.
    The subgraph $\Gamma_0$ is a core for $g$, and components of $\Gamma - \Gamma_0$ are thus a well-chosen family of nesting neighborhoods.
    Vertices and edges outside $\Gamma_0$ are shaded to indicate its associated block decomposition, 
    with cells of the same color belonging to the same block.
\end{example}

\subsubsection{The cell structure of $\Gamma$} \label{sec:cell-structure-Gamma}
Since disjoint blocks partition both the vertices and open edges of the graph $\Gamma$, they give us a means to  fully enumerate its cell structure.
The following notation for cells of $\Gamma$ relative to a particular block decomposition $\seq[n \in \ZZ]{B_n}$ will prove quite useful later on.

For $\ep \in \{-1, 0, 1\}$, begin by listing vertices of $B_\ep$ as
\[
    \verts{B_\ep} = \{v_\ep^j : 1 \le j \le \nverts{B_\ep}\}.
\]
Now, for all $n > 1$, set $v_n^j \coloneq g^{n - 1}(v_1^j)$ and $v_{-n}^j \coloneq g^{-n + 1}(v_{-1}^j)$.
Since $B_n$ and $B_{-n}$ are homeomorphic to $B_1$ and $B_{-1}$ via $g^{n - 1}$ and $g^{-n + 1}$ respectively, their vertices are exactly
\begin{align*}
    \verts{B_n} & = \{v_n^j : 1 \le j \le \nverts{B_1}\}, & \verts{B_{-n}} & = \{v_{-n}^j : 1 \le j \le \nverts{B_{-1}}\}.
\end{align*}
The entire 0-skeleton of $\Gamma$ can therefore be enumerated
\[
    \verts{\Gamma} = \{v_n^j : n \in \ZZ, 1 \le j \le \nverts{B_{\sgn n}}\}.
\]

We use the same approach to catalog edges.
Starting with
\[
    \edges{B_\ep} = \{e_\ep^i : 1 \le j \le \nedges{B_\ep}\},
\]
put $e_n^i \coloneq g^{n - 1}(e_1^i)$ and $e_{-n}^i \coloneq g^{-n + 1}(e_{-1}^i)$ for all $n > 1$.
This gives
\begin{align*}
    \edges{B_n} & = \{e_n^i : 1 \le i \le \nedges{B_1}\}, & \edges{B_{-n}} & = \{e_{-n}^i : 1 \le i \le \nedges{B_{-1}}\},
\end{align*}
so that the complete set of 1-cells of $\Gamma$ can be written
\[
    \edges{\Gamma} = \{e_n^i : n \in \ZZ, 1 \le i \le \nedges{B_{\sgn n}}\}.
\]

When it is necessary to orient the edges of $\Gamma$, we will always do so in a way that ensures $g$ is orientation preserving outside its core.
This allows us to operate with the assumption that $g^{\pm 1}(\bd_\ell e_n^i) = \bd_\ell e_{n \pm 1}^i$ for all $n \neq 0$ and $\ell \in \{0, 1\}$.

\subsubsection{Components of blocks}

Sometimes, whole blocks may be too coarse for the task at hand, and we will want to distinguish their individual components.
These can be indexed by leading ends, as the next proposition indicates.
\begin{proposition} \label{prop:Bn-components}
    Let $\Gamma_0$ be a core for $g$ with associated block decomposition $\seq[n \in \ZZ]{B_n}$.
    The components of each nonzero block $B_\pmn$, $n \ge 1$ are in bijection orbits of $\Ends_\pm(\Gamma) / g$.
    In particular, the components of $B_{\pm 1}$ are
    \[
        \pi_0(B_{\pm 1}) = \set{B_{\pm 1} \cap U_E = U_E - g^{\pm \pd{E}}(U_E) : E \in \Ends'_\pm(\Gamma)},
    \]
    with components of $B_\pmn$ being translates
    \[
        \pi_0(B_\pmn) = \set{g^{\pm(n - 1)}(B_{\pm 1} \cap U_E) = B_\pmn \cap U_{g^{\pm(n - 1)}(E)} : E \in \Ends'_\pm(\Gamma)}.
    \]
\end{proposition}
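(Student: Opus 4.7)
The plan is to establish the component description for $B_{\pm 1}$ directly from the leading-end structure of \cref{def:wellchosen.leading-ends}, then transport the result to arbitrary $B_\pmn$ via the homeomorphism $g^{\pm(n-1)}$ furnished by \cref{def:wellchosen.homeo}. I will write out the positive case; the negative case is identical after swapping $g$ with $g^{-1}$ and $\cl{\Gamma_+}$ with $\cl{\Gamma_-}$.

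The first step is to unpack $B_1 = \Gamma_+ - g(\Gamma_+)$ one $g$-orbit at a time. Fix an orbit in $\Ends_+(\Gamma)/g$ with leading end $E$, so the nesting neighborhoods of its members are $U_E, U_{g(E)}, \ldots, U_{g^{\pd{E}-1}(E)}$, pairwise disjoint by \cref{def:wellchosen.disjoint}. \cref{def:wellchosen.leading-ends} gives $g(U_{g^k(E)}) = U_{g^{k+1}(E)}$ for $0 \le k \le \pd{E}-2$ and $g(U_{g^{\pd{E}-1}(E)}) = g^{\pd{E}}(U_E)$, while end-periodicity (\cref{def:end-periodic}) forces $g^{\pd{E}}(U_E) \subsetneq U_E$ (otherwise $g^p(U_E) = U_E$ for the global period $p$). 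Subtracting the orbit-wise images cancels $U_{g^k(E)}$ for $1 \le k \le \pd{E}-1$, leaving
\[
    \Bigl(\bigcup_{k=0}^{\pd{E}-1} U_{g^k(E)}\Bigr) \setminus \Bigl(\bigcup_{k=0}^{\pd{E}-1} g(U_{g^k(E)})\Bigr) = U_E - g^{\pd{E}}(U_E).
\]
Ranging over all orbits expresses $B_1$ as the disjoint union $\bigsqcup_{E \in \Ends'_+(\Gamma)} (U_E - g^{\pd{E}}(U_E))$, and disjointness of distinct nesting neighborhoods identifies each piece with $B_1 \cap U_E$.

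The delicate step is verifying each $U_E - g^{\pd{E}}(U_E)$ is actually connected, so the disjoint decomposition above is the component decomposition. Both $U_E$ and $g^{\pd{E}}(U_E)$ are connected, being single components of $\Gamma$ minus a compact subset by virtue of belonging to a neighborhood basis. I expect this to be the main obstacle: one must rule out extraneous components of the difference. The argument I would run is that any unbounded component of $U_E - g^{\pd{E}}(U_E)$ would itself be a neighborhood of some end of $\Gamma$ (here I lean on the finiteness of $\Ends(\Gamma)$), but $U_E$ is by definition a neighborhood of $E$ alone, and every end neighborhood of $E$ contained in $U_E$ is nested within some term of its basis; since $g^{\pd{E}}(U_E)$ is the maximal such term strictly contained in $U_E$, no extra unbounded component can coexist with it. Any compact components are then handled by invoking \cref{def:wellchosen.subgraph} to trace paths through $\cl{U_E}$ back to the frontier vertices of $g^{\pd{E}}(U_E)$.

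Finally, for each $n > 1$, \cref{def:wellchosen.homeo} provides that $\restr{g^{n-1}}{\cl{\Gamma_+}}$ is an isomorphism onto its image, so $\pi_0(B_n) = g^{n-1}(\pi_0(B_1))$. Each leading component $B_1 \cap U_E$ maps to $g^{n-1}(B_1 \cap U_E) = g^{n-1}(U_E) - g^{n-1+\pd{E}}(U_E)$, and tracing the orbit-wise decompositions of $g^{n-1}(\Gamma_+)$ and $g^n(\Gamma_+)$ against $U_{g^{n-1}(E)}$ confirms this equals $B_n \cap U_{g^{n-1}(E)}$, matching the stated form. The repelling case follows verbatim with $g^{-1}$ substituted for $g$, using the dual half of \cref{def:wellchosen.leading-ends}.
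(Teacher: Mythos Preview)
Your overall strategy---decompose $\Gamma_+$ orbit by orbit via \cref{def:wellchosen.leading-ends}, cancel the telescoping terms to obtain $B_1 = \bigsqcup_{E \in \Ends'_+(\Gamma)} (U_E - g^{\pd{E}}(U_E))$, then transport by $g^{\pm(n-1)}$---is exactly the paper's argument, and your identification $U_E - g^{\pd{E}}(U_E) = B_1 \cap U_E$ matches the paper's justification that $g^{\pd{E}}(U_E)$ is the only piece of $g(\Gamma_+)$ meeting $U_E$.

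Where you diverge is in explicitly flagging connectivity of each $U_E - g^{\pd{E}}(U_E)$ as ``the delicate step.'' You are right that this is what upgrades the disjoint-union decomposition to a component decomposition, and the paper's proof in fact asserts it without argument. However, your sketch does not close the gap. The unbounded-component branch is vacuous: $U_E - g^{\pd{E}}(U_E) = B_1 \cap U_E \subseteq B_1$, and $\cl{B_1}$ is a finite subgraph by \cref{prop:blockprops.closure-subgraph}, so every component already has compact closure and there is nothing to rule out. That leaves only your ``compact components'' clause, which proposes tracing paths in $\cl{U_E}$ to frontier vertices of $g^{\pd{E}}(U_E)$; but when that frontier consists of several vertices, you have not explained why those vertices lie in a single component of the difference, which is precisely the content of the claim. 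So the connectivity argument, as written, does not go through---though it is worth noting the paper's own proof leaves this point unaddressed as well.
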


\begin{proof}
    \Cref{def:wellchosen.leading-ends} allows each nesting neighborhood of an attracting end to be written as
    $g^k(U_E)$ for some leading end $E \in \Ends'_+(\Gamma)$ and power $0 \le k < \pd{E}$.
    Consequently, $\Gamma_+$ can be expressed as the union
    \[
        \Gamma_+ = \bigcup_{E \in \Ends'_+(\Gamma)} \bigcup_{k = 0}^{\pd{E} - 1} g^k(U_E),
    \]
    whose terms are disjoint by \cref{def:wellchosen.disjoint}.
    Applying this expansion to $B_1$, we find
    \begin{align*}
        B_1 = \Gamma_+ - g(\Gamma_+) & = \bigcup_{E \in \Ends'_+(\Gamma)} \left( \bigcup_{k = 0}^{\pd{E} - 1} g^k(U_E) - \bigcup_{k = 1}^{\pd{E}} g^k(U_E) \right) \\
                                     & = \bigcup_{E \in \Ends'_+(\Gamma)} U_E - g^{\pd{E}}(U_E)                                                                    \\
                                     & = \bigcup_{E \in \Ends'_+(\Gamma)} B_1 \cap U_E,
    \end{align*}
    with the final equality justified by the observation that $g^{\pd{E}}(U_E)$ is the only component of $g(\Gamma_+)$ to meet $U_E$.
    The analogous argument for $\Gamma_-$ gives
    \[
        B_{-1} = \bigcup_{E \in \Ends'_-(\Gamma)} U_E - g^{-\pd{E}}(U_E) = \bigcup_{E \in \Ends'_-(\Gamma)} B_{-1} \cap U_E.
    \]
    Thus, the components of $B_1$ and $B_{-1}$ are in bijection with the leading attracting and repelling ends of $\Gamma$, and can be enumerated as in the proposition.
    Since a nonzero block $B_\pmn$ is homeomorphic to $B_{\pm 1}$ via $g^{\pm(n - 1)}$, the rest follows.
\end{proof}

\subsubsection{Proper cores}
We will say that a core $\Gamma_0$ for $g$ is \term{proper} if it contains the image of every edge mapped non-combinatorially.
Morally, such a core is as close to being invariant under $g$ as one can ask, satisfying the identity $g(B_{-1} \cup \Gamma_0) = \Gamma_0 \cup B_1$,
where $B_{\pm 1}$ denote blocks of its corresponding decomposition.
We can always upgrade a well-chosen core to a proper core by enlarging it in the following way.

\begin{definition}[Core enlargement]
    Let $\Gamma_0$ be a core for $g$ with associated block decomposition $\seq[n \in \ZZ]{B_n}$.
    The \term{$n$th enlargement} of $\Gamma_0$ is the set $\Gamma_n \coloneq \bigcup_{i = - n}^n B_i$.
\end{definition}

It is easily verified that any enlargement of $\Gamma_0$ is itself core for $g$.
Thus,

\begin{proposition}
    \label{prop:big-core}
    If \(\Gamma_0\) is a core for \(g\), then some enlargement of \(\Gamma_0\) is a proper core.
\end{proposition}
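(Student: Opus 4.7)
The plan is to choose $N$ large enough that the enlargement $\Gamma_N$ engulfs $g(\Gamma_0)$. The argument then hinges on a single observation: the only edges of $\Gamma$ that $g$ can possibly map non-combinatorially are those of the finite core $\Gamma_0$.

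To justify this observation, I would invoke \cref{def:wellchosen.homeo}, which ensures that $g$ restricts to an isomorphism on each subgraph $\cl{U_E}$. Every edge of $\Gamma$ not lying in $\Gamma_0$ sits inside some such $\cl{U_E}$ (the closures cover $\cl{\Gamma_+} \cup \cl{\Gamma_-}$ and are subgraphs by \cref{def:wellchosen.subgraph}), so $g$ maps it to a single edge. Consequently the set of non-combinatorially mapped edges is a subset of $\edges{\Gamma_0}$, which is finite.

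Next, I would observe that $g(\Gamma_0)$ is contained in finitely many blocks. Being the image of a finite subcomplex under a graph map, $g(\Gamma_0)$ is supported on a finite subgraph of $\Gamma$. Since the blocks $\seq[n \in \ZZ]{B_n}$ partition the cells of $\Gamma$ by \cref{prop:blockprops.decomposition}, only finitely many of them meet $g(\Gamma_0)$. I then choose $N \ge 0$ large enough that $g(\Gamma_0) \subeq \bigcup_{|i| \le N} B_i = \Gamma_N$.

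Finally, I would verify that $\Gamma_N$ is a proper core. The remark preceding the proposition asserts that every enlargement of a core is again a core, so it suffices to check properness. Any edge of $\Gamma_N$ mapped non-combinatorially must lie in $\Gamma_0$ — edges of $\Gamma_N - \Gamma_0$ sit in $\Gamma_+ \cup \Gamma_-$ and are mapped to single edges by the earlier observation — so its image lies in $g(\Gamma_0) \subeq \Gamma_N$, as required. The only step demanding real care is the first: recognizing that non-combinatorial behavior is confined to $\Gamma_0$. Once that is in hand, finiteness of $\Gamma_0$ together with the block decomposition does the rest.
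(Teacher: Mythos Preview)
Your proposal is correct and follows essentially the same approach as the paper: both arguments choose $N$ large enough that $g(\Gamma_0) \subeq \Gamma_N$ (the paper phrases this via the compact exhaustion $\Gamma_0 \subeq \Gamma_1 \subeq \cdots$ and continuity of $g$), and both conclude properness from the fact that $g$ acts combinatorially on edges outside $\Gamma_0$. Your write-up is a bit more explicit about why non-combinatorial behavior is confined to $\Gamma_0$, but the underlying logic is identical.
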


\begin{proof}
    Since the blocks $\seq[n \in \ZZ]{B_n}$ cover $\Gamma$, the sequence $\Gamma_0 \subeq \Gamma_1 \subeq \cdots$ represents an exhaustion of our graph by compact sets.
    It follows by the continuity of $g$ that $g(\Gamma_0) \subeq \Gamma_n$ for all $n$ sufficiently large.
    Every such $\Gamma_n$ is a core for $g$, and must be proper since the map acts combinatorially on edges of $\Gamma_n - \Gamma_0$.
\end{proof}

\subsubsection{Subgraph and joining edges}
\label{sec:standard-orientation}

The block decomposition $\seq[n \in \ZZ]{B_n}$ associated to a choice of core for $g$ gives rise to the following trichotomy between edges of $\Gamma$.

\begin{definition}[Subgraph and joining edges]
    A 1-cell in $\edges{\Gamma}$ will be called a \term{core edge} if it lies in $\Gamma_0$,
    a \term{subgraph edge} if it is contained in the largest subgraph $\sg{B_n}$ of some nonzero block $B_n$, 
    and a \term{joining edge} otherwise.
\end{definition}

For each $n \neq 0$, we put $J_n \coloneq B_n - \sg{B_n}$.
Since $\cl{B_n}$ is itself a subgraph, we see that $J_n$ is exactly the union of joining edges in $B_n$, so named because they join vertices of $\sg{B_n}$ to vertices of an adjacent block $\sg{B_k}$.
Since $g$ is a homeomorphism outside the core, we have $J_{\pm(n+1)} = g^{\pm1}(J_{\pmn})$ for all $n \ge 1$.
Thus, every edge of $J_\pmn$ is the translate under $g^{\pm(n - 1)}$ of an edge from $B_{\pm1}$ with a vertex in $\bd_\pm \Gamma_0$.

We will say an edge of $J_n$, $n \ne 0$ is in \term{standard orientation} if it has its terminal vertex in $\sg{B_n}$ and initial vertex in $\sg{B_k}$, 
where the value of $k$ depends on its period $q$:
\[
    k = \begin{cases}
        \max\{0, n - q\} & \text{if } n > 0  \\
        \min\{0, n + q\} & \text{if } n < 0.
    \end{cases}
\]
Going forward, we shall assume joining edges are in standard orientation as a matter of convention.

\begin{example}
    In our running example (\cref{fig:Gamma}), $J_n = \set{A_n, C_n, D_n}$ and $J_{-n} = \set{e_{-n}}$ for all $n \ge 1$.
\end{example}

\begin{remark}
    \label{rmk:at-least-one-joining-edge-per-component}
    The distinction between subgraph and joining edges is not a canonical feature of $g$, since it depends quite significantly on a choice of block decomposition.
    However, because a nonzero block $B_n$ of any decomposition properly contains $\sg{B_n}$ (the latter being closed while the former is not), it holds in general that $J_n$ is nonempty for all $n \neq 0$.
    In fact, since the individual components of $B_n$ are non-closed by \cref{prop:Bn-components}, it must be the case that every component of a nonzero block contains at least one joining edge.
\end{remark}

\section{End-periodic mapping tori}

\subsection{The mapping torus of a graph map}
Let $\gX$ be a connected graph and $\map[f]{\gX}$ a graph map.
The \term{mapping torus} of $f$, denoted $Z_f$, is the defined as the quotient of $\gX \times [0, 1]$ 
under the equivalence relation $\sim$ generated by declaring $(x, 1) \sim (f(x), 0)$ for all $x \in \gX$.

Recall that $Z_f$ is naturally a 2-complex whose cell structure descends from the following canonical cellulation of $\gX \times [0, 1]$. 
Viewing the subsets $\gX \times \{0\}$ and $\gX \times \{1\}$ as disjoint copies of the original graph, the 0-skeleton of $\gX \times [0, 1]$ taken to be the union of their vertices: $\skele[0]{\gX} \times \{0, 1\}$.
For each $v \in \verts{\gX}$, the set of points $v \times (0, 1)$ constitutes the interior of a 1-cell attached from $(v, 0)$ to $(v, 1)$, and these, along with the two copies of the original graph comprise the 1-skeleton of $\gX \times [0, 1]$.
Finally, for all edges $e$ of $\gX$, the points $e \times (0, 1)$ define the interior of a 2-cell of $\gX \times [0, 1]$ which attaches along the loop in the 1-skeleton based at $\bd_0 e$ and traversing the 1-cells $e \times \{0\}$, $\{\bd_1 e\} \times [0, 1]$, $e^{-1} \times \{1\}$, and $\{\bd_0 e\} \times [0, 1]^{-1}$ in sequence.

In order for the cell structure on $\gX \times [0, 1]$ to pass to the quotient, it is necessary that the equivalence relation respects preserves $n$-skeleta.
Since $Z_f$ is obtained by gluing edges of $\gX \times \{1\}$ along edge paths of $\gX \times \{0\}$, one can ensure this by subdividing edges of $\gX \times \{1\}$, 
adding a new 0-cell at the point $v \times \{1\}$ for all $v \in f^\inv(\skele[0]{\gX}) - \skele[0]{\gX}$.
With this refinement, $\sim$ identifies vertices with vertices and edges with edges, allowing the cellulation of $\gX \times [0, 1]$ to descend to a cellulation of $Z_f$.
There is a canonical isomorphism between the original graph $\gX$ and the subcomplex $\gX \times \{0\} / \mathord{\sim}$ inside the 1-skeleton of $Z_f$; 
we will routinely identify the two in order to view $\gX$ as a subset of $Z_f$.
Because the equivalence relation only identifies points of the 1-skeleton, 
the mapping torus contains for each $e \in \edges{\gX}$ a 2-cell attaching along the loop $e \cdot t_0 \cdot f(e)^{-1} \cdot t_1^{-1}$, 
as shown in \cref{fig:rectangle}.
\begin{figure}[htpb]
    \centering
    \includegraphics[]{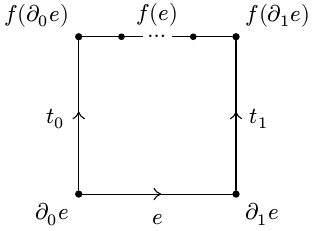}
    \caption{ Generic 2-cell in the mapping torus $Z_f$. }
    \label{fig:rectangle}
\end{figure}

$Z_f$ is equipped with a \term{suspension semiflow} $\map[\psi_f]{Z_f \times [0, \infty)}{Z_f}$, 
defined as the quotient of the local semiflow on $\gX \times [0, 1]$ that maps $((x, y), t) \mapsto (x, y + t)$ for all $x \in \gX$, $0 \le y \le 1$, and $0 \le t \le 1 - y$.
The \term{forward orbit} of a point $z \in Z_f$ under this semiflow is the subset $\psi_f(z, [0, \infty))$, 
and we use $\map[\psi_f^t]{Z_f}$ to denote the mapping $z \mapsto \psi_f (z, t)$.
Note that our parameterization of the semiflow is such that $\psi_f^n = f^n$ for all $n \ge 0$.

If $f$ does not collapse edges, then every 2-cell of its mapping torus attaches along a path like the one in \cref{fig:rectangle}.
We abide by the following convention for depicting such 2-cells in all diagrams that follow.

\begin{convention}
    Let $e$ be an oriented edge not collapsed by $f$, identified with $(0, 1)$ via the homeomorphism $\Phi_e$.
    We will depict the closure of the 2-cell $\psi_f(e, (0, 1))$ as the unit square in $\RR^2$,
    first identifying its interior with $(0, 1)^2$ by and $\psi_f(x, t) \mapsto (\Phi_e(x), t)$ for all $x \in e$, $t \in (0, 1)$, 
    and then continuously extending to $[0, 1]^2$.
    This sends $e$ and $f(e)$ to the horizontal segments $(0, 1) \times \{0\}$ and $(0, 1) \times \{1\}$ respectively, 
    while mapping $\psi_f(\bd_\ell e, [0, 1])$ to the vertical segment $\{\ell\} \times [0, 1]$ for $\ell \in \{0, 1\}$.
    Note that in $Z_f$, pieces of these perimeter edges may become identified.
\end{convention}

In accordance with our convention for depicting them, we shall adopt the term \term{vertical} 
to describe 1-cells of the form $v \times (0, 1) / \mathord{\sim}$ in the mapping torus, 
and \term{horizontal} for those descended from edges of $\gX \times \{0\}$.
It follows from the cellulation of $\gX \times [0, 1]$ that every 1-cell of $Z_f$ is either horizontal or vertical.

\subsection{Mapping tori of end-periodic graph maps}
Let $\map[g]{\Gamma}$ be an end-periodic map as in \cref{conv:g}.
Letting $Z = Z_g$ denote its mapping torus, we preemptively identify $\Gamma$ within the 1-skeleton of $Z$.

\subsubsection{Positive and negative mapping tori}
\label{sec:positive-and-negative-mapping-tori}

The decomposition $\Gamma = \Gamma_0 \cup \Gamma_+ \cup \Gamma_-$ induced by a core $\Gamma_0$ for $g$ induces the following decomposition of its mapping torus.
\begin{definition}[Positive and negative mapping tori]
    The \term{positive and negative mapping tori} of $g$ are the subsets $Z_+$ and $Z_-$ of $Z$ corresponding to the mapping tori of $\restr{g}{\Gamma_+}$ and $\restr{g^{-1}}{\Gamma_-}$ respectively.
    The set $Z_0$ is the difference $Z_0 \coloneq Z - (Z_+ \cup Z_-)$.
\end{definition}

The fact that $\ginv$ is not only well-defined on $\Gamma_-$, but takes this domain into itself properly and homeomorphically means
that each flowline of $\psi_g$ passing through a point of $\Gamma_-$ can be extended infinitely backwards.
This yields a semiflow on the negative mapping torus, denoted $\map[\psi_\ginv]{Z_- \times \halfline}{Z_-}$,
which is the inverse of $\psi_g$ in the sense that $\psi_g^t \psi_\ginv^t = \id_{Z_-}$ for all $t \ge 0$.
We can thus interpret $Z_+$ and $Z_-$ dynamically as the forward orbits of $\Gamma_+$ and $\Gamma_-$ under $\psi_{g}$ and $\psi_{g^{-1}}$ respectively.

While $Z_0$, $Z_+$, and $Z_-$ are themselves neither open nor closed, their closures are all subcomplexes of $Z$.
In particular, $\cl{Z_0}$ is finite, while each $\cl{Z_\pm}$ is infinite.
By analogy to the junctures of $\Gamma$, we let $\bd_\pm Z_0 \coloneq \cl{Z_0} \cap \cl{Z_\pm}$ denote the finite 1-subcomplexes of $Z$ 
where the closures of $Z_\pm$ meet $\cl{Z_0}$.
The definition of the mapping torus shows each $\bd_\pm Z_0$ consists of the vertical 1-cells $\psi_g(v, [0, 1])$ corresponding to juncture vertices $v \in \bd_\pm \Gamma_0$,
and the horizontal 1-cells from $\Gamma_0 - g(\Gamma_0)$ or $g(\Gamma_0) - \Gamma_0$.
If $\Gamma_0$ is a proper core, we see that the horizontal 1-cells of $\bd_\pm Z_0$ are exactly $\edges{B_{\pm 1}}$.

Whereas $\Gamma_\pm$ has one component for each attracting or repelling end of $\Gamma$, the corresponding mapping torus $Z_\pm$
has a component for each orbit of $\Ends_\pm(\Gamma)$ under the action of $g$.
Indeed, by \cref{prop:Bn-components}, we can view each component of $Z_\pm$ as the forward orbit of a component
$B_{\pm 1} \cap U_E$ of $B_{\pm 1}$ under $\psi_{g^{\pm 1}}$.

\subsubsection{The cell structure of $Z$}
As was done for vertices and edges of $\Gamma$ in \cref{sec:cell-structure-Gamma}, 
one can enumerate cells of $Z$ along the lines of the decomposition $Z_0 \cup Z_+ \cup Z_-$.
Since $Z_0$ contains only finitely many cells, the interesting task here is to describe the structures of the positive and negative mapping tori.

We begin with $Z_+$, whose 0-cells, canonically identified with the vertices of $\Gamma_+$, are enumerated as in \cref{sec:cell-structure-Gamma}, and whose horizontal 1-cells likewise correspond to edges of $\Gamma_+$.
Vertical 1-cells of $Z_+$ exist in bijection with $\verts{Z_+}$, each being a segment of a flowline of $\psi_g$ that originates from a vertex, and for all $n \ge 1$, we let $t_n^j$ denote the vertical edge $\psi_g(v_n^j, (0, 1))$ whose attaching map is $\bd_\ell t_n^j = v_{n + \ell}^j, \ell \in \{0, 1\}$.
Thus,
\[
    \edges{Z_+} = \{e_n^i : 1 \le i \le \nedges{B_1}, n \ge 1\} \cup \{t_n^j : 1 \le j \le \nverts{B_1}, n \ge 1\}.
\]

Elements of $\faces{Z_+}$ share a similar correspondence with edges of $\Gamma_+$, where for each horizontal edge $e_n^i \in \edges{Z_+}$, 
we have a 2-cell $f_n^i$ given by the partial orbit $\psi_g(e_n^i, (0, 1))$.
This allows us to write
\[
    \faces{Z_+} = \{f_n^i : 1 \le i \le \nedges{B_1}, n \ge 1\}.
\]
Because the action of $g$ on $\Gamma_+$ is combinatorial, every such $f_n^i$ attaches along a square of 1-cells as illustrated in \cref{fig:2-cells-Zplus}.
Here we note that, while the indexing scheme for vertices requires us to specify this attachment differently based on whether $e_n^i$ is a subgraph or joining edge, 
the result in both cases is structurally identical.

This gives an exhaustive account of the cell structure of $Z_+$.
Cells of $Z_-$ are named by repeating this process with $\Gamma_-$, $\ginv$, and $\psi_\ginv$ filling the roles of $\Gamma_+$, $g$, and $\psi_g$ respectively.

\begin{figure}[htb]
    \centering
    \begin{subfigure}[b]{0.45\textwidth}
        \centering
        \includegraphics[]{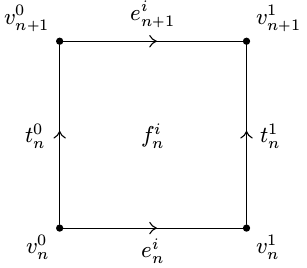}
        \subcaption{$e_n^i$ a subgraph edge.}
        \label{fig:2-cells-Zplus:f}
    \end{subfigure}
    \hfill
    \begin{subfigure}[b]{0.45\textwidth}
        \centering
        \includegraphics[]{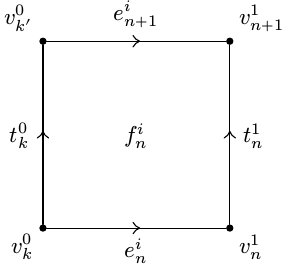}
        \subcaption{$e_n^i$ a joining edge.}
        \label{fig:2-cells-Zplus:h}
    \end{subfigure}
    \caption{Attaching map of the 2-cell $f_n^i$ in the case where $e_n^i$ is a subgraph edge (left), or a joining edge (right).
        In the latter diagram, $k = \max\{0, n - q\}$ and $k' = \max\{0, n - q + 1\}$, where $q$ is the period of $e_n^i$.}
    \label{fig:2-cells-Zplus}
\end{figure}

\subsection{Compactification}
When $f$ is an end-periodic homeomorphism of an infinite-type surface, it is well-known 
that its mapping torus $Z_f$ admits a flowline-preserving compactification that realizes it as the interior of 
a compact 3-manifold whose boundary is a finite union of closed surfaces \cite{Fenley97}.
The aim of this section is to prove an analogous result for mapping tori of end-periodic graph maps, namely:

\compactifiedMappingTorusTheorem

An immediate corollary is that $\pi_1(Z_g)$ is finitely generated.
The proof is in two parts.
Beginning with the mapping torus $Z = Z_g$, we first describe a procedure, called \emph{compactification along flowlines},
which realizes $W_g$ as the union of $Z$ and a certain boundary at infinity.
Then, we adapt the natural cell structure of $Z$ to obtain a cellulation of $W_g$.
A visual depiction of the process is provided in \cref{fig:compactification}.

\begin{figure}[ht]
    \centering
    \begin{subfigure}[b]{0.1\textwidth}
        \includegraphics[width=\textwidth]{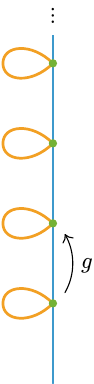}
        \subcaption{$\Gamma_+$}
        \label{fig:compactification-1}
    \end{subfigure}
    \hfill
    \begin{subfigure}[b]{0.29\textwidth}
        \includegraphics[width=\textwidth]{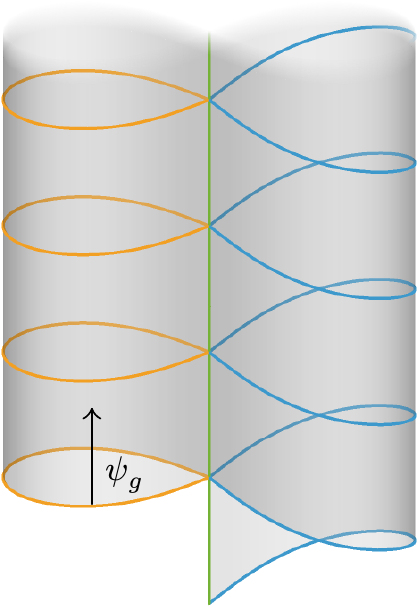}
        \subcaption{$\cl{Z_+}$}
        \label{fig:compactification-2}
    \end{subfigure}
    \hfill
    \begin{subfigure}[b]{0.29\textwidth}
        \includegraphics[width=\textwidth]{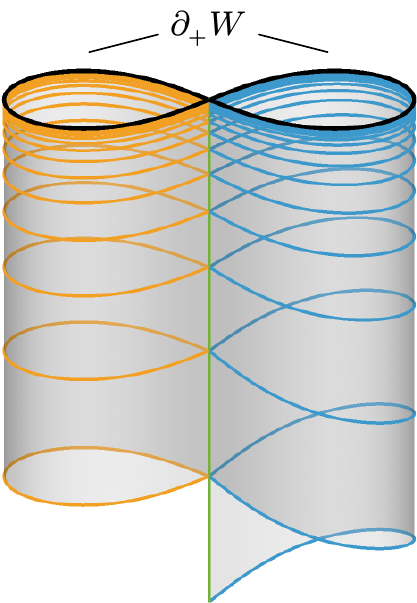}
        \subcaption{$\cl{W_+}$}
        \label{fig:compactification-3}
    \end{subfigure}
    \hfill
    \begin{subfigure}[b]{0.29\textwidth}
        \includegraphics[width=\textwidth]{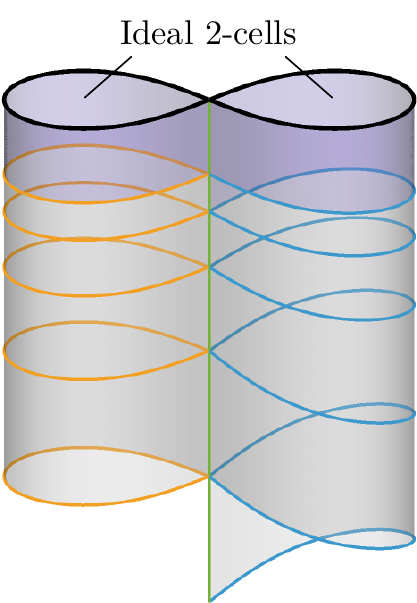}
        \subcaption{Cellulation of $\cl{W_+}$}
        \label{fig:compactification-4}
    \end{subfigure}
    \caption{
        Compactification and cellulation of a positive mapping torus.
        Let $\map[g]{\Gamma}$ be an end-periodic map whose action on $\Gamma_+$, depicted in (\subref{fig:compactification-1}), is translation by one vertex.
        Each positive block of $\Gamma$ has one joining edge (blue), and one subgraph edge (orange).
        \Cref{fig:compactification-2} shows $\Gamma_+$ embedded in the positive mapping torus $Z_+$, whose closure is an infinite 2-complex.
        Flowlines of $\psi_g$ correspond to vertical rays, and after compactifying along them we are left with the space $\cl{W_+}$ presented in (\subref{fig:compactification-3}); 
        note that $\Gamma_+$ is no longer embedded.
        By truncating $\Gamma_+$ at some cutoff, we achieve a finite cell structure on $\cl{W}$ as in (\subref{fig:compactification-4}).
    }
    \label{fig:compactification}
\end{figure}

\subsubsection{Compactification along flowlines}
Fix a core $\Gamma_0$ for $g$, and identify $\Gamma_\pm$ in the 1-skeleton of $Z_\pm$ as usual.
Being contained in a mapping torus, the forward orbit $\gamma$ of a point in $Z_+$ under $\psi_g$ is fully determined by its intersections with $\Gamma_+$, of which there are infinitely many.
Since $\restr{g}{\Gamma_+}$ is a homeomorphism with no fixed points, it follows that $\gamma$ is an infinite ray, which must be unbounded because $\bigcap_{n \ge 0} g^n(\Gamma_+) = \emptyset$.
Forward orbits of points in $Z_-$ under $\psi_\ginv$ are likewise rays that escape compact sets.

Roughly speaking, the compactification of $Z$ is achieved by appending to every such ray an ideal point.
We begin by defining two disjoint sets $\bd_\infty Z_+$ and $\bd_\infty Z_-$, to be made up of ideal points
originating from flowlines of $\psi_g$ through $Z_+$, and flowlines of $\psi_\ginv$ through $Z_-$ respectively.
Formally, the elements of $\bd_\infty Z_\pm$ are equivalence classes of forward orbits $\gamma_z \coloneq \psi_{g^{\pm 1}}(z, \halfline)$, $z \in Z_\pm$,
where two rays $\gamma_w, \gamma_z \in \bd_\infty Z_\pm$ represent the same point if
$\gamma_w \subeq \gamma_z$ or $\gamma_z \subeq \gamma_w$. 
Note that a single flowline of $\psi_g$ intersecting both $Z_+$ and $Z_-$ represents distinct points of $\bd_\infty Z_+$ and $\bd_\infty Z_-$.

Letting $W_\pm$ denote the union $Z_\pm \cup \bd_\infty Z_\pm$, we may extend $\psi_{g^{\pm 1}}$ to a semiflow 
$\map{W_\pm \times [0, \infty]}{W_\pm}$ by setting $\psi_{g^{\pm 1}}(z, \infty) \coloneq \gamma_z$ for all $z \in Z_\pm$, 
and putting $\psi_{g^{\pm 1}}(\gamma_z, t) \coloneq \gamma_z$ for all $\gamma_z \in \bd_\infty Z_\pm$ and $t \in [0, \infty]$.
The neighborhoods in $W_\pm$ of boundary points $\gamma_z \in \bd_\infty Z_\pm$ will be sets of the form $\psi_{g^{\pm 1}}(N(z), (t, \infty))$, where $t > 0$, and $N(z)$ is any $Z_\pm$-neighborhood of $z$.
Along with the open sets of $Z_\pm$, these provide the basis for a topology on $W_\pm$ with respect to which this extension of $\psi_g$ is indeed continuous.
We can now define:
\begin{definition}
    The \term{compactified mapping torus} of an end-periodic graph map $g$ is the space
    \[
        W = W_g \coloneq Z \cup \bd_\infty Z_+ \cup \bd_\infty Z_-,
    \]
    whose topology is generated by $W_\pm$-neighborhoods of points in $\bd_\infty Z_\pm$, and open sets in $Z$.
\end{definition}
The sets $\bd_+ W \coloneq \bd_\infty Z_+$ and $\bd_- W \coloneq \bd_\infty Z_-$ are called the \term{positive and negative boundaries} of $W$,
with its total \term{boundary} being their union $\bd W \coloneq \bd_+ W \cup \bd_- W$.
The \term{interior} of $W$ is the difference $W - \bd W$, homeomorphic to the uncompactified mapping torus by definition.

\subsubsection{Invariance under choice of core}
\label{rmk:boundary-points-are-maximal-flowlines}
Although our construction uses a core for $g$ to talk about the sets $Z_+$ and $Z_-$, this is more a convenience than a necessity. 
Without reference to a core, points of $\bd_\infty Z_+$ could instead be described as flowlines of $\psi_g$ 
which are maximal with respect to inclusion and unbounded in the forward direction, with two flowlines considered equivalent if they ever intersect. 
Similarly, elements of $\bd_\infty Z_-$ can be viewed as maximal flowlines of $\psi_g$ unbounded in the reverse direction 
(the fact that backward ends of such flowlines must be distinct means no equivalence relation is necessary).

The equivalence of these definitions follows by observing that the intersection of a maximal flowline with horizontal 1-cells of $Z$ 
can be identified with a a bi-infinite sequence in $\Gamma$ representing the orbit of a point under powers of $g$. 
If the flowline is unbounded, this sequence escapes any compact core for $g$ in either the forward or reverse direction, 
after which it becomes trapped in the positive or negative domain by end-periodicity. 
Therefore, the sets $Z_\pm$ associated to any choice of core eventually meet every unbounded flowline of $\psi_g$.
This is enough to conclude that $W$, like the uncompactified mapping torus $Z$, is uniquely determined by $g$.

\subsubsection{The cell structure of $W_g$}
\label{sec:cell-structure-W}

The only claim of \cref{thm:compactified-mapping-torus} left to prove is that the compactified mapping torus is a finite 2-complex.
We shall do this directly, endowing $W$ with an explicit cell structure derived from that of $Z$.
The combinatorial description developed here will feature prominently in subsequent sections.

Since we can decompose $W = Z_0 \cup W_+ \cup W_-$, where $\cl{Z_0}$ already inherits the structure of compact cell complex from $Z$,
our major task is showing that the closed subsets $\cl{W_+}$ and $\cl{W_-}$ can also be realized as finite 2-complexes.
For concreteness, we shall consider $W_+$ in isolation, 
noting that the analogous argument for $W_-$ is readily obtained by substituting $g$ and $\psi_g$ for $g^{-1}$ and $\psi_{g^{-1}}$.
We can also make the simplifying assumption that our core $\Gamma_0$ is proper, so that, per \cref{sec:positive-and-negative-mapping-tori}, 
$\cl{Z_0}$ meets $\cl{W_+}$ at exactly the finite subcomplex $\cl{B_1} \cup \psi_g(\bd_+ \Gamma_0, [0, 1])$.
The cellulation of $\cl{W_+}$ we build will only need to preserve these cells in order to be reattached to $\cl{Z_0}$.

Considering $\Gamma_+$ within the 1-skeleton of $Z_+$ as usual, we denote the forward orbit of an open edge $e_1^i$ of $B_1$ under the natural semiflow by $E^i \coloneq \psi_g(e_1^i, \halfline)$.
Since $\psi_g$ is injective when restricted to $Z_+$, we may identify $E^i$ with the strip $(0, 1) \times \halfline \subeq \RR^2$ via the map $\psi_g(x, t) \mapsto (\Phi_e, t)$ for all $x \in e_1^i$, $t \ge 0$.
Under this identification, horizontal 1-cells $e_n^i$ correspond to horizontal segments $(0, 1) \times \{n - 1\}$, and unbounded flowlines through $e_1^i$ are taken to vertical rays of the form $\{x\} \times \halfline$.
In particular, this shows $\psigoo(e_1^i) \subeq \bd_+ W$ to be homeomorphic to an open interval.
The closed strip $[0, 1] \times \halfline$, illustrated in \cref{fig:cellstrip}, then corresponds to the 2-complex $F^i$ obtained from $\cl{Z_+}$ by cutting along the forward orbits of $\bd_0 e_1^i$ and $\bd_1 e_1^i$.

\begin{figure}[htpb]
    \centering
    \begin{subfigure}[b]{0.45\textwidth}
        \centering
        \includegraphics[]{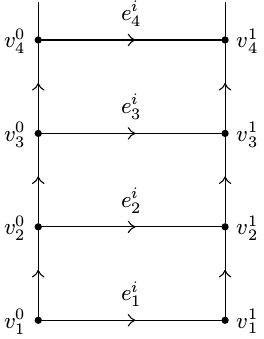}
        \subcaption{$e_1^i$ a subgraph edge.}
        \label{ei}
    \end{subfigure}
    \hfill
    \begin{subfigure}[b]{0.45\textwidth}
        \centering
        \includegraphics[]{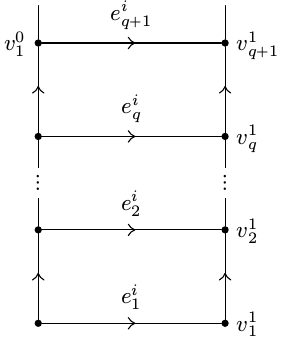}
        \subcaption{$e_1^i$ a joining edge.}
        \label{di}
    \end{subfigure}
    \caption{
        Attaching map for the ideal 2-cell $f_\infty^i$ associated to a subgraph edge (left) and joining edge (right).
        Dotted lines indicate images of horizontal 1-cells $e_{m + 1}^i, e_{m + 2}^i, \dots$ from $F^i$ under the inclusion $F^i \to K^i$; note that these are not 1-cells of $K^i$.
    }
    \label{fig:cellstrip}
\end{figure}

Compactifying the subcomplex $F^i$ along flowlines yields a space $K^i$, which is topologically just the product of $e_1^i$ with a closed interval.
Just as gluing the closed strips $F^1, \dots, F^k$ along their vertical edges recovers $\cl{Z_+}$, one obtains $\cl{W_+}$ by piecing together these compactified strips along their upright boundaries.
This observation motivates our next step, which will be to realize each $K^i$ explicitly as a finite 2-complex.
If we can do this in a way that ensures the vertical sides of adjacent strips glue together by cellular maps, the result will be the desired cellulation of $\cl{W_+}$.

As a preparatory step, we note that each $F^i$ is an infinite tower of 2-cells whose structure can be adapted to a finite cellulation of $K^i$ in roughly the following way.
Fixing a positive integer $m$, called the \term{cutoff} for the strip, we will leave all cells with lower index $\le m$ untouched while interpreting the rays $\psi_g (\bd_0 e_1^i, (m, \infty))$ and $\psi_g (\bd_1 e_1^i, (m, \infty))$ as 1-cells with endpoints at infinity, and $\psi_g (e_1^i, (m, \infty))$ as a 2-cell whose top edge similarly attaches along an ideal horizontal segment.
In order to guarantee neighboring strips attach cellularly, it will be enough to require that both use the same threshold $m$.
Thus, the final cell structure on $\cl{W_+}$ will depend on a separate cutoff for each of its components.
Since the value of this cutoff effectively determines a neighborhood of the corresponding boundary component, we sometimes refer to this as a \term{choice of boundary neighborhoods}.

We now make the above discussion precise.
Observe that for any vertex $v_1^j \in \verts{\cl{B_1}}$, the cell structure of its forward orbit $\psi_g(v_1^j, \halfline)$ in $Z_+$
is that of an infinite ray $\halfline$ subdivided by a 0-cell $v_{n + 1}^j$ at every nonnegative integer point $n$.
We convert this to a cellulation of $\psi_g (v_1^j, [0, \infty])$ by appending an \term{ideal 0-cell} $v_\infty^j$
at $\psigoo(v_1^j)$, and then discarding all vertices $v_n^j$ with $n > m$.
In their place will be a single 1-cell $t_\infty^j$ attaching from $v_m^j$ to $v_\infty^j$.
This realizes the vertical edges of $K^i$ as finite 1-complexes, while preserving the finite cell structure of $\psi_g(e_1^i , [0, m - 1]) \subeq K^i$ inherited from $Z_+$.
The open interval $\psigoo(e^i)$ can now be viewed as an \term{ideal 1-cell} $e_\infty^i$ attaching along the ideal vertices of the left and right edge ($v_\infty^0$ and $v_\infty^1$, say).
Similarly, the open rectangle $\psi_g (e_0^i, (m - 1, \infty))$ becomes the interior of an \term{ideal 2-cell} $f_{\infty}^i$ attached along the perimeter of $[0, 1] \times [m - 1, \infty]$.
The precise spelling of this loop depends on whether $e_1^i$ is a subgraph or joining edge;
in the former case, it is simply $e_m^i \cdot t_\infty^1 \cdot (e_\infty^i)^{-1} \cdot (t_\infty^0)^{-1}$.
If, on the other hand, $e_1^i$ is a joining edge, the lower indices of its initial and terminal vertices are offset by its period $q$,
so the attaching map for $f_\infty^i$ becomes $e_m^i \cdot t_\infty^1 \cdot (e_\infty^i)^{-1} \cdot (t_{m - q}^0 \mathbin{\cdots} t_{m - 1}^0 \cdot t_\infty^0)^{-1}$.
The two situations are presented in \cref{fig:ideal-2-cells}, where we draw $[0, 1] \times [m - 1, \infty]$ scaled by the homeomorphism $(x, y) \mapsto (x, 1 - 2^{m - y})$.

\begin{figure}[htpb]
    \centering
    \begin{subfigure}[b]{0.35\textwidth}
        \centering
        \includegraphics[]{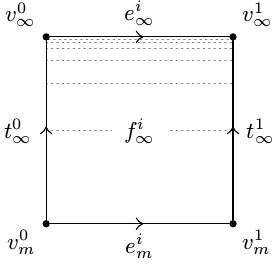}
        \subcaption{ $e_m^i$ a subgraph edge. }
        \label{fig:ideal-2-cells.subgraph}
    \end{subfigure}
    \hfill
    \begin{subfigure}[b]{0.6\textwidth}
        \centering
        \includegraphics[]{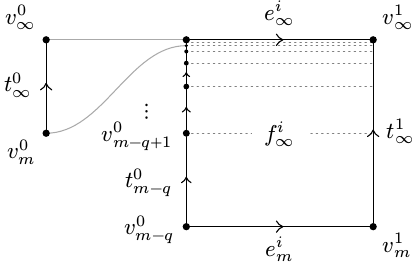}
        \subcaption{$e_m^i$ a joining edge of period $q$. }
        \label{fig:ideal-2-cells.joining}
    \end{subfigure}
    \caption{
        Attaching map for the ideal 2-cell $f_\infty^i$ associated to a subgraph edge (left) and joining edge (right).
        Dotted lines indicate images of horizontal 1-cells $e_{m + 1}^i, e_{m + 2}^i, \dots$ from $F^i$ under the inclusion $F^i \to K^i$;
        note that these are not 1-cells of $K^i$.
    }
    \label{fig:ideal-2-cells}
\end{figure}

Cellulating every $K^i$ in this way and then gluing the collection of strips along vertical boundaries results in a finite cell structure on $\cl{W_+}$.
Since our construction preserves the edges of $\cl{B_1}$ and all vertical 1-cells originating from juncture vertices, there is no obstruction to reattaching $\cl{W_+}$ to $\cl{Z_0}$.
We do the same for $\cl{W_-}$, whose cell structure is also finite, and this completes the proof of \cref{thm:compactified-mapping-torus}.

\begin{remark}
    \label{rmk:cellulation-induced-by-core}
    The cellulation we obtain from this process, while non-canonical, is fully determined our choice of core and boundary neighborhoods.
    The first of these decisions is also the most consequential, since any core, even one that is not proper,
    also determines a set of minimal cutoffs for which the re-gluing of $\cl{W_\pm}$ to $\cl{Z_0}$ remains cellular.
    Thus, a core for $g$ may be said to induce a cell structure on $W$.
\end{remark}

Going forward, we will tacitly assume that every cellulation of a compactified mapping torus we encounter arises from the construction described in \cref{sec:cell-structure-W}.
The following notation will also become standard.

\begin{convention}[Standard notation for mapping tori]
    \label{conv:cellulation-of-W}
    Just as $\map[g]{\Gamma}$ represents a generic end-periodic map, 
    we use $Z$ and $W$ to denote its uncompactified and compactified mapping tori respectively.
    The cell structure of $W$ (when one is fixed) will be constructed relative to a core $\Gamma_0$ for $g$, 
    which induces a block decomposition $\seq[n \in \ZZ]{B_n}$ on $\Gamma$.
    Sometimes we will consider maps denoted $\map[g']{\Gamma'}$; in this case, 
    all corresponding objects will be indicated by primed versions of the same symbols.
\end{convention}

A cellulation of $W$ allows us to describe the structure of its boundary quite explicitly.

\begin{proposition}
    \label{prop:boundary-structure}
    The boundaries $\bd_+ W$ and $\bd_- W$ are finite graphs, and every component of $\bd_\pm W$ is the image of a component of $B_{\pm 1}$ under $\psigoopm$.
    In particular, the component $\psigoopm(B_{\pm 1} \cap U_E)$ is isomorphic to $\cl{B_{\pm 1} \cap U_E} / \mathord{\sim}$,
    where $v \sim g^{\pm\pd{E}}(v)$ for each juncture vertex $v \in \bd_\pm \Gamma_0 \cap \cl{U_E}$.
\end{proposition}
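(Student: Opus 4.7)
The plan is to deduce the proposition directly from the explicit cellulation of $W$ developed in \cref{sec:cell-structure-W}. By construction, $\bd_+ W$ is a subcomplex of the 1-skeleton whose 0-cells and 1-cells are in bijective correspondence with $\verts{\cl{B_1}}$ and $\edges{B_1}$, respectively, so $\bd_+ W$ is a graph, and it is finite once we note that $B_1$ has only finitely many cells. The latter holds because each component $B_1 \cap U_E = U_E - g^{\pd{E}}(U_E)$ sits between two successive members of the neighborhood basis $\seq[n \ge 0]{g^{\pd{E} n}(U_E)}$ for $E$, and is therefore contained in a compact set, while $\Ends'_+(\Gamma)$ is finite by hypothesis. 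The argument for $\bd_- W$ is identical.

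For the statement on components, I would introduce the natural cellular map $\eta \from \cl{B_1} \to \bd_+ W$ sending each $x$ to $\psigoo(x)$, obtained by composing the inclusion $\cl{B_1} \hookrightarrow \cl{Z_+}$ with the retraction to $\bd_+ W$ along the semiflow. Unwinding the strip-by-strip cellulation of $\cl{W_+}$ shows that $\eta$ carries each vertex and edge cell of $\cl{B_1}$ onto the corresponding ideal cell of $\bd_+ W$ and respects attaching maps. Because $\psi_g$ preserves the block decomposition of $\Gamma_+$, \cref{prop:Bn-components} implies $\eta(\cl{B_1 \cap U_E})$ lies in a single component of $\bd_+ W$ and accounts for every cell there; components of $\bd_+ W$ are therefore in bijection with the components of $B_1$.

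The substantive content is the identification of a single component $\psigoo(B_1 \cap U_E)$. Two points $x, x' \in \cl{B_1 \cap U_E}$ satisfy $\eta(x) = \eta(x')$ precisely when $g^k(x) = g^\ell(x')$ for some integers $k, \ell \ge 0$. I would case-split according to whether each of $x, x'$ is a subgraph cell of $B_1 \cap U_E$ or a juncture vertex in $\bd_+ \Gamma_0 \cap \cl{U_E}$. Invoking \cref{def:wellchosen.homeo,def:wellchosen.leading-ends} to track iterates through $\cl{U_E}, \cl{U_{g(E)}}, \dots, \cl{U_{g^{q-1}(E)}}$ (with $q = \pd{E}$), the iterates $v, g(v), \dots, g^{q-1}(v)$ of a juncture vertex $v$ remain junctures, while $g^q(v)$ lies in $\fr g^q(U_E) \subseteq \verts{\sg{B_1 \cap U_E}}$. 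This immediately gives $\eta(v) = \eta(g^q(v))$ and records the identification $v \sim g^q(v)$.

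The main obstacle is ruling out any further identifications. The key input is that $g$ restricts to a homeomorphism on $\cl{U_E}$ and on its iterated images, so that any equation $g^k(x) = g^\ell(x')$ with $k \ge \ell$ must pull back to an equality $x' = g^{qm}(x)$ for some $m \ge 0$; disjointness of the distinct nesting neighborhoods $\cl{U_E}, \cl{U_{g(E)}}, \dots, \cl{U_{g^{q-1}(E)}}$ rules out the case $k - \ell \not\equiv 0 \pmod q$. From there, no two distinct subgraph cells of $B_1 \cap U_E$ can share an orbit (they live in distinct blocks), a subgraph vertex $u \in \sg{B_1 \cap U_E}$ can only collide with a juncture $v$ via $u = g^q(v)$, and two distinct juncture vertices cannot collide because the required iterate would place one of them inside $g^q(U_E) \subsetneq U_E$, contradicting its juncture status. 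With all identifications exhausted, $\eta$ descends to a cellular bijection $\cl{B_1 \cap U_E}/\mathord{\sim} \to \psigoo(B_1 \cap U_E)$ between finite 1-complexes, and hence to an isomorphism. The statement for $\bd_- W$ follows by substituting $g^{-1}$, $\psi_\ginv$, and $\Gamma_-$ throughout.
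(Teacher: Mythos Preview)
Your argument is correct and follows essentially the same route as the paper: both restrict $\psigoo$ to the first block to enumerate the ideal cells of $\bd_+ W$ and then read off the attaching maps of ideal joining edges to obtain the quotient description. One small slip: the 0-cells of $\bd_+ W$ are in bijection with $\verts{B_1}$, not $\verts{\cl{B_1}}$, since a juncture vertex $v \in \bd_+ \Gamma_0 \cap \cl{U_E}$ shares its forward orbit with $g^{\pd{E}}(v) \in B_1$ --- but you correctly account for exactly this identification later, so the slip is harmless. The paper sidesteps the bookkeeping by restricting $\psigoo$ to the \emph{open} block $B_1$, where injectivity is immediate from $B_1 \cap g(B_1) = \emptyset$, and then recovers the juncture identification by tracking where $\bd_0 e_\infty^i$ lands for each ideal joining edge; your choice to work with $\cl{B_1}$ and carry the relation $\sim$ throughout is more explicit about why no further identifications occur, a point the paper leaves implicit.
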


\begin{proof}
    We consider the positive boundary in isolation, the argument for $\bd_- W$ being analogous.
    The restriction of $\psigoo$ to $\Gamma_+$ is a local homeomorphism sending each vertex
    $v_1^j \in \verts{B_1}$ to a 0-cell $v_\infty^j$ in $\bd_+ W$ and each edge $e_1^i \in \edges{B_1}$ to a 1-cell $e_\infty^i$ of $\bd_+ W$.
    Since $\psigoo(x) = \psigoo(y)$ if and only if $x$ and $y$ belong to the same forward orbit under $g$,
    but $B_1 \cap g(B_1) = \emptyset$, it follows these ideal vertices and edges must be distinct.
    \Cref{rmk:boundary-points-are-maximal-flowlines} shows $\restr{\psigoo}{B_1}$ surjects the positive boundary,
    so we conclude $\bd_+ W$ is a finite graph with vertices $\{v_\infty^j : 1 \le j \le \nverts{B_1}\}$ and edges $\{e_\infty^i : 1 \le i \le \nedges{B_1}\}$.

    The attaching map for an ideal edge $e_\infty^i$ is determined by its counterpart $e_1^i$ in $B_1$
    according to $\bd_\ell e_\infty^i = \psigoo(\bd_\ell e_1^i)$ for $\ell \in \{0, 1\}$.
    Since every subgraph edge of $B_1$ is incident to vertices in the same block, the boundary graph contains an isomorphic copy of $\sg{B_1}$.
    On the other hand, if $e_1^i$ is a joining edge, then $\bd_0 e_1^i$ is a juncture vertex, 
    and $v_\infty^j \coloneq \bd_0 e_\infty^i$ corresponds to a vertex $v_1^j \in \verts{B_1}$ satisfying $g^n(\bd_0 e_1^i) = v_1^j$ for some positive $n$.
    By \cref{sec:standard-orientation}, this holds when $n$ matches the period of $e_1^i$.
    Thus, if $e_1^i \in \edges{(B_1 \cap U_E)}$, we have $v_1^j = g^{\pd{E}}(\bd_0 e_1^i) \in g^{\pd{E}}(B_1 \cap U_E) = B_1 \cap U_E$,
    so it follows that $\psigoo(B_{1} \cap U_E)$ comprises a component of $\bd_+ W$ for each $E \in \Ends'_+(\Gamma)$.
\end{proof}

For each leading end $E \in \Ends'_\pm(\Gamma)$, we will let $\Sigma_E$ designate the component $\psigoopm(B_{\pm 1} \cap U_E)$ of $\bd_\pm W$.
The unique correspondence between edges of $\bd_\pm W$ and 1-cells of $B_{\pm1}$ means the distinction between subgraph and joining edges persists into the boundary.
Thus, an element of $\edges{\bd_\pm W}$ will be called an \term{(ideal) subgraph edge} if it is the image under 
$\smash{\psigoopm}$ of an edge from $\sg{B_{\pm 1}}$, and an \term{(ideal) joining edge} otherwise.

\begin{example}
    If $g$ is our example end-periodic map defined in \cref{fig:Gamma}, the positive and negative boundary graphs of its compactified mapping torus are as follows.

    \begin{figure}[htpb]
        \centering
        \hfill
        \begin{subfigure}[c]{0.35\textwidth}
            \centering
            \includegraphics[]{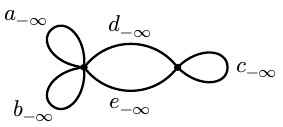}
            \subcaption{$\bd_- W_g$}
        \end{subfigure}
        \hfill
        \begin{subfigure}[c]{0.35\textwidth}
            \centering
            \includegraphics[]{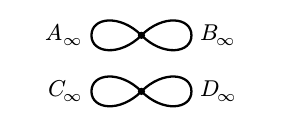}
            \subcaption{$\bd_+ W_g$}
        \end{subfigure}
        \hfill{}
    \end{figure}
\end{example}

\subsection{Graphs in the compactified mapping torus}
The purpose of this section is to introduce graphs in the mapping torus whose dynamics will be of particular interest going forward.
As usual, let $Z$ be the mapping torus of an end-periodic map with compactification $W$.
We say a subset $\gX$ is a \term{graph} or \term{subgraph} in $Z$ or $W$ if it is an embedded locally finite 1-complex (though not necessarily a subcomplex).
The \term{boundary} of a $\gX \subeq W$ is defined to be $\bd \gX \coloneq \gX \cap \bd W$ (likewise, $\bd_\pm \gX \coloneq \gX \cap \bd_\pm W$).
We say that $\gX$ is \term{nicely embedded} in $W$ if $\bd \gX$ is a finite (possibly empty) subset of $\verts{\gX}$ 
consisting of valence-1 vertices meeting $\bd W$ away from $\zskele{\bd W}$.
A graph in $Z$ or the interior of $W$ is said to be \term{transverse} to the natural semiflow
if it has an oriented $[-1, 1]$-bundle neighborhood $\gX \times [-1, 1]$ intersecting flowlines along segments that start at $\gX \times \{-1\}$,
end at $\gX \times \{1\}$, and meet $\gX$ at exactly one point in between.
We consider nicely embedded subgraphs of $W$ to be transverse to the semiflow if their intersection with $\interior W$ is.

Given a graph $\gX$ nicely embedded in $W$ and transverse to the semiflow $\psi_g$, we can consider for each point $x \in \gX$ 
the set $R(x) \coloneq \{t \in (0, \infty) : \psi_g (x, t) \in \gX\}$,
consisting of all positive times at which the forward orbit of $x$ meets the original graph.
If $\min R(x)$ exists for all $x \in \gX$, we define the \term{first return map} of this graph to be the function 
$\gX \to \gX$ sending $x$ to $\psi_g (x, \min R(x))$, its point of first return.

The prototypical example of a graph in $Z$ transverse to the natural semiflow $\psi_g$ is $\Gamma$ itself.
We note that $\Gamma$ has first return map $\psi_g^1 = g$, thus encoding the combinatorial information of the original graph map in the dynamics of its mapping torus.
Upon compactification, $\Gamma$ ceases to be an embedded subgraph; edges in neighborhoods of ends now accumulate onto ideal 1-cells of the boundary.
This leads us to consider certain nicely embedded graphs that are closely related. 
The first of these is called the \emph{interior subgraph}.

\begin{definition}[Interior subgraph]
    Let $W$ be a compactified and cellulated mapping torus.
    The \term{interior subgraph} $\Delta$ associated to this cellulation
    is the subcomplex consisting of all closed horizontal 1-cells from the interior of $W$.
\end{definition}

Clearly, the interior subgraph is a finite subcomplex of $\skele[1]{W}$ that contains every non-ideal vertex.
One can think of $\Delta$ as what remains of $\Gamma$ in the 1-skeleton of $W$ after compactification; 
this is well illustrated in \cref{fig:compactification-4}, where the intersection of $\Delta$ with $\cl{W_+}$ 
corresponds to the union of all closed orange and blue 1-cells.
Since its edges are derived from horizontal 1-cells of $Z$, it is also natural to view $\Delta$ a subset of $\Gamma$, 
and in this regard it is a connected subgraph containing the core $\Gamma_0$.

We note that when $W$ has been cellulated with cutoff $m$ everywhere, the resulting $\Delta$ coincides with $\Gamma_m = \bigcup_{i=-m}^m B_i$, the $m$th enlargement of $\Gamma_0$.
Our next proposition shows such an identity always holds relative to the correct choice of core.

\begin{proposition}
    \label{prop:rebase-interior-subgraph}
    Let $m$ be a positive integer, and suppose $W$ is cellulated according to \cref{conv:cellulation-of-W} with cutoffs $m_E > m$ around each component $\Sigma_E$ of the boundary.
    Denote the associated interior subgraph by $\Delta$.
    Then $\Gamma_0$ is contained in a larger core $\Gamma^*_0$ for $g$ whose induced block decomposition 
    $\seq[n \in \ZZ]{B^*_n}$ is such that $B^*_{n} \homeo B_{n}$ for all $n \ne 0$,
    and $\Delta = \Gamma^*_m$.
\end{proposition}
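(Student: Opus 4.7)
The plan is to construct the new core $\Gamma^*_0$ by annexing to $\Gamma_0$ the "inner" blocks of $\Delta$, leaving the outermost $m$ blocks per orbit free to realize the new $m$th enlargement. Concretely, for each leading positive end $E \in \Ends'_+(\Gamma)$ with cutoff $m_E$ and each $1 \le j \le m_E - m$, $\Delta$ contains the block component $B_j \cap U_{g^{j-1}(E)}$; let $\Gamma^*_0$ be the union of $\Gamma_0$ with all such components, together with their negative-side analogs $B_{-j} \cap U_{g^{-(j-1)}(E)}$ for $E \in \Ends'_-(\Gamma)$ and $1 \le j \le m_E - m$. By inspection $\Gamma^*_0$ is a finite subgraph of $\Gamma$ containing $\Gamma_0$, and $\Delta$ decomposes as $\Gamma^*_0$ together with the outermost $m$ block components in each orbit.

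After setting up $\Gamma^*_0$, I would exhibit its nesting neighborhoods and verify the well-chosen property. Setting $q_E = \lfloor (m_E - m)/\pd{E} \rfloor$ and $r_E = (m_E - m) - q_E \pd{E}$, one computes that the component of $\Gamma - \Gamma^*_0$ about $g^k(E)$, for $0 \le k < \pd{E}$, is
\[
    U^*_{g^k(E)} = \begin{cases} g^{(q_E+1)\pd{E}+k}(U_E) & 0 \le k < r_E, \\ g^{q_E\pd{E}+k}(U_E) & r_E \le k < \pd{E}, \end{cases}
\]
with the analogous formula for repelling ends. Each $U^*_{E'}$ lies in the original nesting basis of $E'$, and this immediately yields parts \refitem{def:wellchosen.disjoint}--\refitem{def:wellchosen.homeo} of \cref{def:wellchosen}. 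For the leading-ends condition \refitem{def:wellchosen.leading-ends}, the correct choice of new leading end in the orbit of $E$ is not $E$ itself but rather $g^{r_E}(E)$; with that designation, the cyclic identities $g^k(\cl{U^*_{g^{r_E}(E)}}) = \cl{U^*_{g^{r_E + k}(E)}}$ for $0 \le k < \pd{E}$ reduce to straightforward arithmetic with the case formula above.

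With the nesting system in hand, I would calculate the induced block decomposition and verify the remaining two conclusions. Using \cref{prop:Bn-components}, the component of $B^*_1$ in the orbit of $E$ works out to $U^*_{g^{r_E}(E)} - g^{\pd{E}}(U^*_{g^{r_E}(E)}) = g^{m_E - m}(B_1 \cap U_E)$, which, read in the original decomposition, is the block component $B_{m_E - m + 1} \cap U_{g^{r_E}(E)}$. Iterating under $g$, one obtains $B^*_n \cap U^*_{g^{r_E + n - 1}(E)} = B_{m_E - m + n} \cap U_{g^{r_E + n - 1}(E)}$ for every $n \ge 1$, each a $g^{m_E - m + n - 1}$-translate of the original component $B_1 \cap U_E$. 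Since $g$ restricts to a homeomorphism on $\Gamma_+$, these identifications assemble to a homeomorphism $B^*_n \homeo B_n$; the negative blocks are treated symmetrically. Combining the block identification above with the definition of $\Gamma^*_0$ then shows that $\Gamma^*_m$ contains $\Gamma_0$ together with the blocks $B_j$ for $1 \le |j| \le m_E$ in every orbit, which by the description of $\Delta$ in \cref{sec:cell-structure-W} is exactly $\Delta$.

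The principal obstacle is the well-chosen condition \refitem{def:wellchosen.leading-ends}. When $m_E$ is not a multiple of $\pd{E}$, the blocks of $\Delta$ lie unevenly across the original nesting neighborhoods of ends in a single orbit: the first $r_E$ positions absorb $q_E + 1$ layers into $\Gamma^*_0$ while the remaining absorb $q_E$. This asymmetry is incompatible with keeping $E$ as leading, because the cyclic identity required by \refitem{def:wellchosen.leading-ends} would then force equal absorbed depths throughout the orbit. Shifting the leading designation forward by $r_E$ positions around the orbit is the unique choice that reconciles the asymmetric depths with the cyclic requirement, and locating this shift is the subtle step that makes the whole construction work.
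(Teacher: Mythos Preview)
Your construction is correct and yields exactly the same core $\Gamma^*_0$ as the paper's, which instead builds from the outside in by first declaring $B^*_m \coloneq \bigcup_E g^{m_E-1}(B_1 \cap U_E)$, pulling back to get $B^*_{m-1},\dots,B^*_1$, and taking $\Gamma^*_0$ as the remainder of $\Delta$. Your treatment is actually more careful on one point the paper elides with ``easily verified'': the observation that the leading end in each orbit must shift to $g^{r_E}(E)$ when $m_E - m$ is not divisible by $\pd{E}$ is exactly the subtlety hidden in that phrase.
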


\begin{proof}
    Each component $V$ of $W_+$ is the forward orbit of a component of $B_1$, 
    which by \cref{prop:Bn-components} can be written $B_1 \cap U_E$ for a leading attracting end $E$.
    Since $V$ is cellulated with cutoff $m_E$, the subset of its 1-skeleton consisting of horizontal 1-cells can be expressed
    \[
        \Delta \cap V = \bigcup_{i = 0}^{m_E - 1} g^i(B_1 \cap U_E),
    \]
    and by letting $E$ vary over all leading attracting ends we obtain a decomposition
    \[
        \Delta \cap W_+ = \bigcup_{E \in \Ends'_+(\Gamma)} \bigcup_{i = 0}^{m_E - 1} g^i(B_1 \cap U_E).
    \]
    Note that all edges of $\Delta$ which meet the closure of ideal 2-cells from $W_+$ are contained in translates of the form $g^{m_E - 1}(B_1 \cap U_E)$.
    Our new core will be defined from the outside in, beginning with
    \[
        B^*_m \coloneq \bigcup_{E \in \Ends'_+(\Gamma)} g^{m_E - 1}(B_1 \cap U_E).
    \]
    This $B^*_m$ is homeomorphic to $B_1$ since each power of $g$ restricts to a homeomorphism of $B_1$, 
    and the images of different components of $B_1$ under various powers of $g$ are all disjoint.
    Since each cutoff $m_E$ is at least $m$, we can then inductively define $B^*_n \coloneq g^{-1}(B^*_{n + 1})$ for all $1 \le n < m$ until we reach $B^*_1$.
    One defines $B^*_{-n}$ similarly, starting with
    \[
        B^*_{-m} \coloneq \bigcup_{E \in \Ends'_-(\Gamma)} g^{-m_E + 1}(B_{-1} \cap U_E)
    \]
    (homeomorphic to $B_{-1}$), and taking $B^*_{-n} \coloneq g(B^*_{-n - 1})$ for all  $1 \le n < m$.
    At this point, $\Gamma^*_0$ is taken to be the remainder of $\Delta$:
    \[
        \Gamma^*_0 \coloneq \Delta - \bigcup_{i = 1}^m (B_i \cup B_{-i}).
    \]
    It is easily verified that $\Gamma^*_0$ represents a well-chosen core for $g$, 
    with all properties of $\seq[n \in \ZZ]{B^*_n}$ in the statement of the proposition satisfied by construction.
\end{proof}

Consider again the compactification pictured in \cref{fig:compactification}.
In going from figure $\labelcref{fig:compactification-3}$ to $\labelcref{fig:compactification-4}$, we discard infinitely many edges of $\Gamma$, 
and are left with a finite graph $\Delta$ that does not meet the boundary.
Now suppose $L \homeo \bd_+ W \times (0, 1]$ is the collar neighborhood of $\bd_+ W$ containing all vertices of $\Gamma_+ - \Delta$ depicted in \cref{fig:pushing-in}.
Since the orange subgraph edges of $\Gamma_+$ are loops whereas the blue joining edges have offset vertices, 
the frontier of $L$ meets $\Gamma_+$ at exactly one point along the interior of a joining edge.
Flowing $L$ into the boundary via a homotopy supported on ideal 2-cells results in a nicely embedded graph $\Lambda$ in $W_+$, containing $\Delta$ and with nonempty boundary.
Generalizing this example motivates our next construction.

\begin{figure}[htpb]
    \centering
    \includegraphics[width=0.72\textwidth]{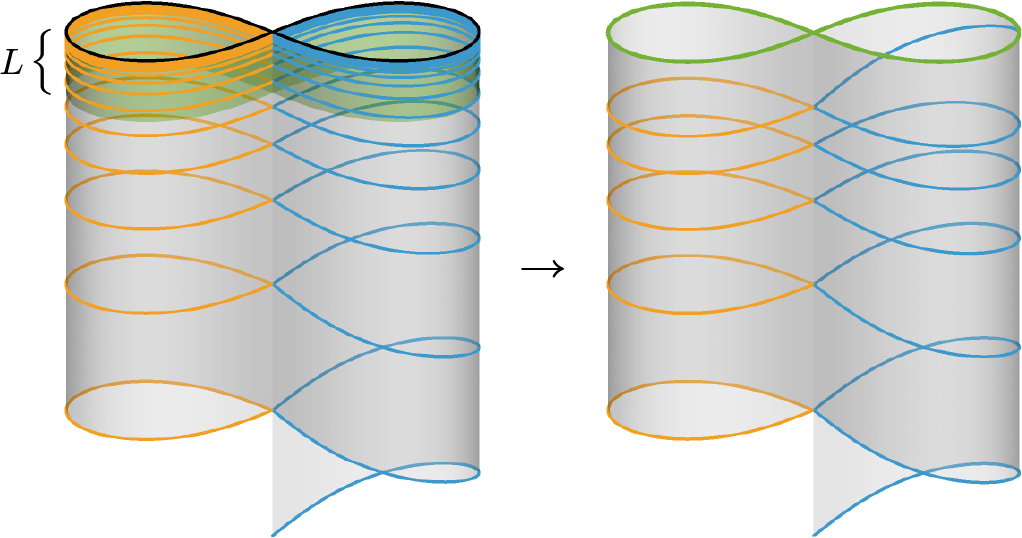}
    \caption{Pushing in a collar neighborhood $L$ of the boundary to obtain a nicely embedded graph.}
    \label{fig:pushing-in}
\end{figure}

Let $e_\infty^i$ be an ideal joining edge of a component $\Sigma \subeq \bd_+ W$ with initial and terminal vertices $v_\infty^0$ and $v_\infty^1$ respectively.
The ideal 2-cell $f_\infty^i$ attaches along $e_\infty^i$ and the horizontal edge $e_m^i$, where $m$ is the cutoff for the cellulation around $\Sigma$.
Letting $q$ denote the period of $e_m^i$, the vertical segment from $v_{m - q}^0 \coloneq \bd_0 e_m^i$ to $v_\infty^0$ along the boundary of $f_\infty^i$
is subdivided by exactly $q$ 0-cells: $v_{m - q + 1}^0, \dots, v_m^0$.
Identifying $f_\infty^i$ with the unit square as in \cref{fig:ideal-2-cells.joining}, 
we now specify for each $j \in \{1, \dots, q\}$ a new 1-cell, $s_+^j$, by drawing an arc from $v_{m - q + j}^0$ to the interior of $e_\infty^i$, 
strictly monotonic in both coordinates, and with closure disjoint from the closure of $s_+^k$ for all other $j \neq k$ (\cref{fig:ideal-subdivision.pos}).
These 1-cells, which we call \term{positive subdividing edges}, have closures nicely embedded in $W$, 
are transverse to the natural semiflow, and collectively partition both $e_\infty^i$ and $f_\infty^i$ into $q + 1$ subcells .
One defines \term{negative subdividing edges} analogously for any joining edge $e_\moo^i$ of $\bd_- W$ by
splitting the ideal 2-cell $f_\moo^i$ along edges $s_-^j$ that attach from 0-cells on the segment between 
$\bd_0 e_{-m}^i$ and $\bd_0 e_\moo^i$ to points on the interior of $e^i_{\moo}$ (\cref{fig:ideal-subdivision.neg}).

\begin{figure}[htpb]
    \centering
    \begin{subfigure}[c]{0.45\textwidth}
        \centering
        \includegraphics[]{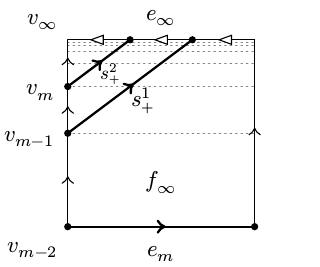}
        \subcaption{Ideal 2-cell of $W_+$.}
        \label{fig:ideal-subdivision.pos}
    \end{subfigure}
    \hfill
    \begin{subfigure}[c]{0.45\textwidth}
        \centering
        \includegraphics[]{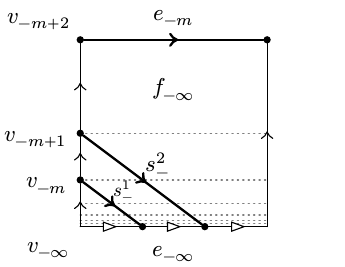}
        \subcaption{Ideal 2-cell of $W_-$.}
        \label{fig:ideal-subdivision.neg}
    \end{subfigure}
    \caption{
        Subdivision for an ideal 2-cell of $W_+$ (left) and $W_-$ (right) which attaches along an ideal joining edge of period 2.
        Edges of $\Lambda$ are drawn in bold, and the derived orientation on $\bd W$ is indicated by open arrowheads.
    }
    \label{fig:subdivided-2-cells}
\end{figure}

\begin{definition}[Principal subgraph of $W$]
    Let $W$ be the compactified and cellulated mapping torus of an end-periodic map $\map[g]{\Gamma}$.
    The \term{principal subgraph} $\Lambda$ associated to this cellulation is defined as the union of 
    its interior subgraph $\Delta$ with the closure of every subdividing edge.
\end{definition}

We observe $\Lambda$ is a finite graph, nicely embedded in $W$ and transverse to the suspension semiflow.
By construction, it intersects every component $\Sigma_E$ of $\bd W$, meeting each joining edge $e_\pmoo \in \edges{\Sigma_E}$ at exactly $\pd{E}$ distinct points.
This graph's interaction with the boundary can be used to impose orientations on ideal joining edges as follows.
For each subdividing edge $s_\pm$ of $\Lambda$ contained in an ideal 2-cell $f_\pmoo$ of $W_\pm$, 
let $f^+$ denote the component of $f_\pmoo - s_\pm$ that meets the forward orbit of $s_\pm$ along $\psi_g$.
The corresponding boundary edge $e_\pmoo$ is endowed with an orientation directed from $e_\pmoo - \cl{f^+}$ to $e_\pmoo \cap \cl{f^+}$.
We call this its \term{derived orientation}, 
in contrast to the \term{standard orientation} it inherits from edges of $J_{\pmn}$, $n \ge 1$ under $\psigoopm$ (recall \cref{sec:standard-orientation}).
From \cref{fig:subdivided-2-cells}, it is clear that every subdividing edge incident to $e_\pmoo$ induces the same derived orientation, 
which does not depend on the cutoff $m$ of the ambient cellulation.

\begin{remark}
    \label{rmk:derived-orientations}
    The derived orientation on each joining edge of the positive boundary is opposite its standard orientation, 
    whereas for joining edges of the negative boundary, the two coincide.
\end{remark}

The way $\Lambda$ sits inside $W$ is recorded by the various data it induces on the boundary.
We refer to a collection of such information as a \emph{decoration}, defined as follows.

\begin{definition}[The decorated boundary]
    \label{rmk:decoration}
    A \term{decoration} of $\bd W$ consists of:
    \begin{itemize}
        \item A \emph{partition} of $\edges{\bd W}$ into subgraph and joining edges.
        \item A \emph{subdivision} of each joining edge in the component $\Sigma_E \subeq \bd W$ by $\pd{E}$ vertices.
        \item A \emph{derived orientation} on joining edges of $\bd W$.
    \end{itemize}
\end{definition}

It should be noted that a decoration of the boundary is really an invariant of the core used in the cellulation of $W$, 
since none of the components above depends on a cutoff.

\begin{example}
    Let $g$ be our running example from \cref{fig:Gamma}.
    The decoration of $\bd W_g$ induced by the core $\Gamma_0$ is as follows.
    All joining edges are highlighted in blue, with subdivisions indicated by valence-2 vertices and derived orientations given by open arrowheads.
    \begin{figure}[htpb]
        \centering
        \hfill
        \begin{subfigure}[c]{0.35\textwidth}
            \centering
            \includegraphics[]{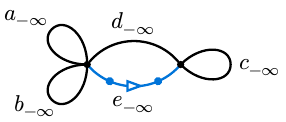}
            \subcaption{$\bd_- W_g$}
        \end{subfigure}
        \hfill
        \begin{subfigure}[c]{0.35\textwidth}
            \centering
            \includegraphics[]{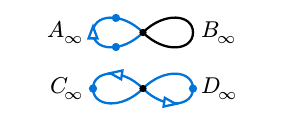}
            \subcaption{$\bd_+ W_g$}
        \end{subfigure}
        \hfill{}
    \end{figure}
\end{example}

\subsection{Coupling}
\label{sec:coupling}

Suppose $\map[g]{\Gamma}$ and $\map[g']{\Gamma'}$ are end-periodic maps.
A \term{decoration preserving map} is a component-wise, coorientation reversing isomorphism $\bd_\pm W \to \bd_\mp W'$ of decorated boundary graphs  which sends subdivided ideal joining edges to one another combinatorially while preserving their derived orientations.
We say that cellulations of $W$ and $W'$ are \term{compatible} if their decorated boundaries can be related by such a map, 
and \term{$h$-compatible} if these decorations are preserved by the automorphism $\map[h]{\bd_\pm W}{\bd_\mp W'}$ in particular.
End-periodic maps will be called ($h$-)compatible if their compactified mapping tori admit ($h$-)compatible cellulations.
On compatible mapping tori, we define an operation called \emph{coupling}.

\begin{definition}[The $h$-couple]
    Let $W$ and $W'$ be compactified mapping tori with compatible cell structures, and $\map[h]{\bd_\pm W}{\bd_\mp W'}$ a decoration preserving map.
    The space
    \[
        M = M(W, W', h) \coloneq W \sqcup_h W'
    \]
    obtained by gluing $W$ and $W'$ along their boundaries via $h$ is called the \term{$h$-couple} of $W$ and $W'$.
\end{definition}

Since the gluing is combinatorial, every $h$-couple is automatically equipped with the structure of a finite 2-complex.
This cell structure depends heavily on the original cellulations of $W$ and $W'$, and is not hard to see that every pair of $h$-compatible cell structures on two mapping tori give rise to a different cellulation of their $h$-couple.
As positive boundary components of $W$ are matched with negative boundary components of $W'$, we can extend their natural semiflows
by first reparameterizing $\psi_g$ and $\psi_{g'}$ to flow through ideal 2-cells in unit time, and then joining flowlines through each identified point on the boundary.
The result is a map $\map[\psi]{M \times \halfline}{M}$ that we call the \term{extended semiflow} on $M$.

\begin{definition}[Principal subgraph of $M$]
    Let $W$ and $W'$ be compactified mapping tori with $h$-compatible cellulations, 
    whose principal subgraphs we denote $\Lambda$ and $\Lambda'$ respectively.
    The \term{principal subgraph} associated to the resulting cellulation of their $h$-couple $M$ is the space
    \[
        \Theta \coloneq \Lambda \sqcup_{\restr{h}{\bd \Lambda}} \Lambda',
    \]
    obtained by gluing $\bd \Lambda$ to $\bd \Lambda'$ by the map $\map[\restr{h}{\bd_\pm \Lambda}]{\bd_\pm \Lambda}{\bd_\mp \Lambda'}$.
\end{definition}

Since $\Lambda$ and $\Lambda'$ are connected, finite, and meet every boundary component of their respective mapping tori, 
$\Theta$ is likewise a finite connected graph in $M$.
The niceness of embeddings $\Lambda \to W$ and $\Lambda' \to W'$ guarantees every positive subdividing edge $s_+$ of $\Lambda$ (resp.\ $\Lambda'$)
is paired with a negative subdividing edge $s_-$ of $\Lambda'$ (resp.\ $\Lambda$) along a single boundary vertex.
It follows that each 0-cell of $\Theta$ descended from a pair of identified boundary vertices has valence 2.
As a matter of convenience, we will omit these points from $\verts{\Theta}$, choosing instead to view the concatenation of $s_+$ and $s_-$
as a single element $s \in \edges{\Theta}$, which we call a \term{subdividing edge} of $\Theta$
(this terminology is unambiguous, since the original $s_+$ and $s_-$ are not themselves elements of $\edges{\Theta}$).
The union of subdividing edges which attach a vertex of $\Delta \cap \Gamma_+$ to a vertex of $\Delta' \cap \Gamma'_-$ will be denoted $S_+$,
with $S_-$ likewise designating the union of edges which connect $\Delta' \cap \Gamma'_+$ to $\Delta \cap \Gamma_-$.
Where relevant, we adopt the convention that edges of $S_+$ are oriented with initial vertices in $\Delta$ and terminal vertices in $\Delta'$, the opposite holding for edges of $S_-$.
What follows is a decomposition of $\Theta$ as the disjoint union $\Delta \cup \Delta' \cup S_+ \cup S_-$.

\begin{proposition} \label{prop:Theta}
    Let $\map[g]{\Gamma}$ and $\map[g']{\Gamma'}$ be 
    $h$-compatible end-periodic maps with compactified mapping tori $W$ and $W'$, 
    and suppose $\Theta$ is a principal subgraph for some cellulation of their $h$-couple $M$.
    Then the following hold:
    \begin{enumprop}
        \item \label{prop:Theta.transverse} $\Theta$ is transverse to the extended semiflow $\psi$, 
        \item \label{prop:Theta.f-exists} $\Theta$ has a well-defined first return map $\map[f]{\Theta}$, and 
        \item \label{prop:Theta.M-homeo-to-mapping-torus} $M$ is homeomorphic to the mapping torus $Z_f$.
    \end{enumprop}
\end{proposition}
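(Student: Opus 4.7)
The strategy is to verify the three properties in order, since transversality underwrites the existence of the first return map and together they yield the mapping torus identification via a standard cross-section argument.

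For \refitem{prop:Theta.transverse}, transversality is a local condition. Away from the gluing locus $\bd W = \bd W'$, the graph $\Theta$ restricts to $\Lambda \subset W$ or $\Lambda' \subset W'$, each of which is transverse to its ambient semiflow by construction. The only thing to check is at a point where a positive subdividing edge $s_+$ of $\Lambda$ is identified with a negative subdividing edge $s_-$ of $\Lambda'$ via $h$. Because $h$ preserves derived orientations, the side of $s_+$ on which $\psi_g$ exits matches the side of $s_-$ on which $\psi_{g'}$ enters, so the oriented local $[-1,1]$-bundle neighborhoods glue into a single transverse neighborhood of $s_+ \cup s_-$ in $M$. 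This is where \cref{rmk:derived-orientations} is essential.

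For \refitem{prop:Theta.f-exists}, I would argue that every forward orbit of $\psi$ through a point of $\Theta$ returns to $\Theta$ in finite positive time. A uniform local lower bound on return times is immediate from \refitem{prop:Theta.transverse} and compactness of $\Theta$, so the main issue is an upper bound. Tracing a flowline starting in the interior of an edge of $\Delta$, it crosses the next horizontal 1-cell of $W$ after unit time, either re-entering $\Delta$ or moving into an ideal 2-cell. Within an ideal 2-cell above a joining edge, the strict monotonicity of the subdividing edges forces the flowline to meet some edge of $S_+$ within unit time; within an ideal 2-cell above a subgraph edge, the flowline crosses to $W'$ via $h$ in unit time and then enters $\Delta'$ after at most one more unit. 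A symmetric analysis handles flowlines originating in $\Delta'$ or on a subdividing edge, and in every case one obtains an explicit upper bound depending only on the cellulation.

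For \refitem{prop:Theta.M-homeo-to-mapping-torus}, writing $r \from \Theta \to (0, \infty)$ for the first return time established in \refitem{prop:Theta.f-exists}, I would consider the map
\[
    \Phi \from \Theta \times [0, 1] \to M, \qquad \Phi(x, t) = \psi(x, t \cdot r(x)).
\]
This descends to a continuous map $Z_f \to M$ under the identification $(x, 1) \sim (f(x), 0)$. Surjectivity follows from the cross-section property in \refitem{prop:Theta.f-exists}, injectivity modulo the identification follows from the definition of $f$, and the fact that the descent is a local homeomorphism uses the flow-box structure afforded by \refitem{prop:Theta.transverse}.

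The main obstacle I anticipate is the interplay between \refitem{prop:Theta.f-exists} and \refitem{prop:Theta.M-homeo-to-mapping-torus}: the first return time $r$ need not be globally continuous, since jumps can occur at points whose forward orbit first meets $\Theta$ at a vertex rather than at an edge interior. Handling this will require a careful combinatorial analysis of the flow-box decomposition within each ideal 2-cell to confirm that $r$ is continuous on the interior of each edge of $\Theta$ and that the jumps along $\zskele{\Theta}$ are consistent with the cell structure of $Z_f$, so that the reparameterization $(x, t) \mapsto (x, t \cdot r(x))$ still assembles into a global homeomorphism.
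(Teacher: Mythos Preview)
Your approach is essentially the same as the paper's: both verify transversality locally at the glued subdividing edges using the fact that $h$ preserves derived orientations, trace flowlines through the cellular decomposition to establish the first return map, and conclude with a cross-section argument for the mapping torus identification. The paper packages steps \refitem{prop:Theta.f-exists} and \refitem{prop:Theta.M-homeo-to-mapping-torus} via explicit rectangular ``ideal neighborhoods'' $N(e_\infty) = e_m \cup f_\infty \cup e_\infty \cup f'_\moo \cup e'_{-m}$ in which the subdividing edges appear as monotone transversals, so that $f$ is computed edge-by-edge (e.g.\ $e_m \mapsto s^1 \mapsto \cdots \mapsto s^q \mapsto (e'_{-m})^{-1}$); this makes your flagged concern about the continuity of $r$ disappear, since in these coordinates $f$ is visibly a graph map and the flow-boxes tile $M$ combinatorially without any reparameterization subtleties.
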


\begin{proof}
    Fix $h$-compatible cellulations of $W$ and $W'$, and let $\Delta$ and $\Lambda$ be the interior and principal subgraphs for $W$, 
    with $\Delta'$ and $\Lambda'$ defined similarly for $W'$.
    We have $\Theta = \Delta \cup \Delta' \cup S_+ \cup S'_-$.
    Since $\Lambda$ and $\Lambda'$ are transverse to the natural semiflows of their enclosing mapping tori, 
    $\Theta$ is transverse to the extended semiflow everywhere except possibly the identified boundary vertices $\bd \Lambda \sim \bd \Lambda'$
    Thus, part \refitem{prop:Theta.transverse} only needs to be verified for edges of $S_+ \cup S_-$, which can be done in the course of proving part \refitem{prop:Theta.f-exists}.
    Since the roles of $g$ and $g'$ are not distinguished, it will be enough to argue $f$ is well-defined
    on $\Delta \cup S_+$; the same follows automatically for $\Delta' \cup S_-$ by exchanging $g$ for $g'$.

    For each component $\Sigma$ of $\bd_\pm W$, let $\Sigma' \coloneq h(\Sigma)$ denote its counterpart in $\bd_\pm W'$.
    The union of all closed ideal 2-cells from $W$ and $W'$ that attach along $\Sigma \sim \Sigma'$ specify a closed neighborhood of this component in $M$, which we denote $N(\Sigma)$.
    Deleting the interior of $N(\Sigma)$ and doing likewise for every other component of $\bd W$ separates $M$ 
    into components $\trunc{W}$ and $\trunc{W'}$, which can be identified with the subcomplexes of $W$ and $W'$ consisting of cells whose closure does not contain an ideal vertex.
    By definition, $\Delta$ is exactly the union of closed horizontal 1-cells in $\trunc{W}$.
    Thus, the edges of $\Delta$ which flow into 2-cells of $\trunc{W}$ have first return map given by $\psi^1 = \psi_g^1 = g$, 
    and those which do not must represent the bottom edge of an ideal 2-cell in $W_+$.


    The only edges of $\Delta \cup S_+$ left to consider are those which intersect a neighborhood $N(\Sigma)$ of some boundary component $\Sigma \subeq \bd_+ W$.
    Each edge $e_\infty$ of $\Sigma$ is paired with an edge $e'_\moo$ of $\Sigma'$ under $h$, and supposing for convenience that both $W$ and $W'$ have been cellulated with cutoff $m$, we take $f_\infty \in \faces{W}$ and $f'_\moo \in \faces{(W')}$ to be the ideal 2-cells that attach along $e_\infty$ and $e_m$ and along $e'_\moo$ and $e'_{-m}$ respectively.
    We define the \term{ideal neighborhood} of $e_\infty$ to be its 2-sided neighborhood in $N(\Sigma)$ given by $N(e_\infty) \coloneq e_m \cup f_\infty \cup e_\infty \cup f'_\moo \cup e'_{-m}$.

    If $e_\infty$ and $e'_\moo$ are both subgraph edges, then $\Theta$ meets $N(e_\infty)$ exactly twice at the edges $e_m$ and $e'_{-m}$.
    Identifying 2-cells with squares in $\RR^2$ as usual, the identification of $e_\infty$ with $e'_\moo$ glues the top of $f_\infty$ to the bottom of $f'_\moo$, so that $N(e_\infty)$ becomes a rectangle with bottom edge $e_m$ and top edge $e'_{-m}$ (\cref{fig:ideal-neighborhood.subgraph}).
    Trajectories under the extended semiflow move upward along vertical segments, and it is clear that $f(e_m) = e'_{-m}$.

    On the other hand, if $e_\infty$ and $e'_\moo$ are both joining edges, then $\Theta \cap N(e_\infty)$ consists of $e_m$, $e'_{-m}$, and $q$ subdividing edges $s^1, \dots, s^q$, where $q$ is the period of $e_m$.
    Again, $N(e_\infty)$ is homeomorphic to a rectangle with bottom edge $e_m$, top edge $e'_{-m}$, and $\psi$ flowing points upward along vertical segments (\cref{fig:ideal-neighborhood.joining}).
    The requirement that $h$ match the derived orientations on $e_\infty$ and $e'_\moo$ ensures that
    each subdividing edge $s^i$ begins on one vertical edge of $N(e_\infty)$,
    ends on the other, and remains transverse to $\psi$ at every point in between,
    justifying part \refitem{prop:Theta.transverse} of the proposition.
    Furthermore, we see that $s^i$ meets every flowline through $N(e_\infty)$ exactly once,
    so $f$ is well-defined and maps $e_m \mapsto s^1 \mapsto \cdots \mapsto s^q \mapsto (e'_{-m})^{-1}$ as oriented edges.

    This proves $f$ is well-defined on $\Delta \cup S_+$, and the same follows for $\Delta' \cup S_-$ by exchanging $g$ for $g'$, completing part \refitem{prop:Theta.f-exists}.
    For item \refitem{prop:Theta.M-homeo-to-mapping-torus}, we simply note that every point $x$ of $M$ is either contained in the neighborhood $N(\Sigma)$ of a boundary component, 
    or one of the subcomplexes $\trunc{W}$ and $\trunc{(W')}$.
    The previous arguments now show any such $x$ can be reached by flowing forward a unique point of $\Theta$, and this completes the proof.
\end{proof}

\begin{figure}[p]
    \centering
    \begin{subfigure}[c]{0.99\textwidth}
        \centering
        \includegraphics[width=\textwidth]{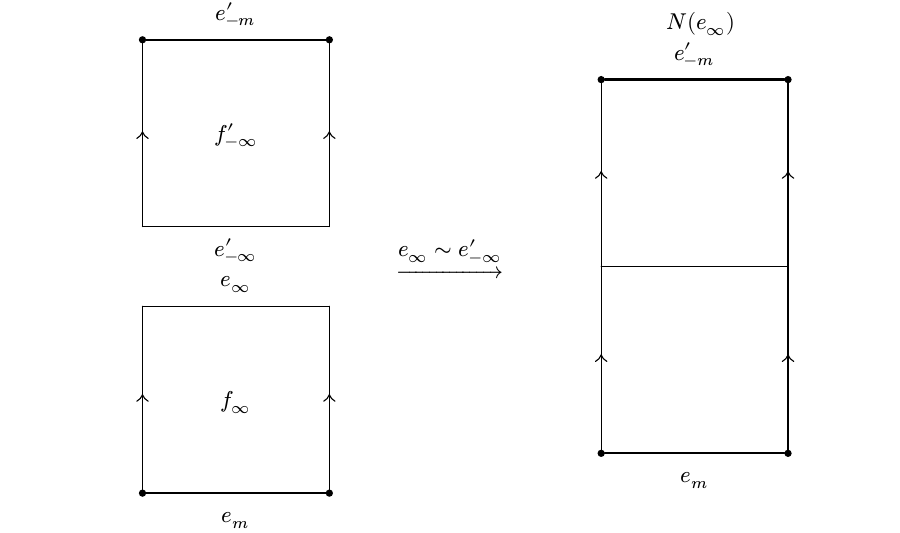}
        \subcaption{Case 1: $e_\infty$ an ideal subgraph edge.}
        \label{fig:ideal-neighborhood.subgraph}
    \end{subfigure}
    \begin{subfigure}[c]{0.99\textwidth}
        \centering
        \includegraphics[width=\textwidth]{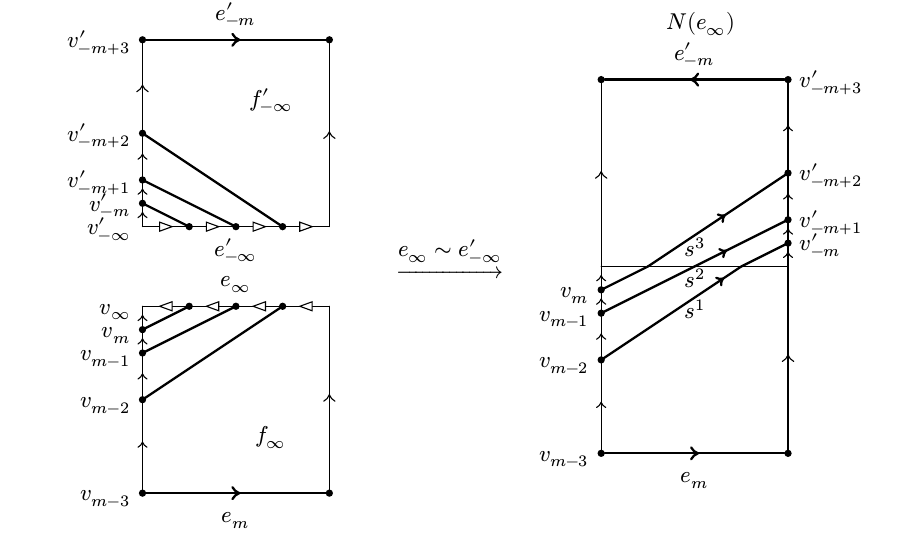}
        \subcaption{        Case 2: $e_\infty$ an ideal joining edge.
            The depicted $e_m$ has period 3, and open arrowheads indicate derived orientations on ideal edges, which must be matched upon gluing.}
        \label{fig:ideal-neighborhood.joining}
    \end{subfigure}
    \caption{
        Gluing ideal 2-cells to form the two-sided neighborhood $N(e_\infty)$ in $M$.
        Bold is used for edges of $\Lambda$ and $\Lambda'$ on the left, which become edges of $\Theta$ on the right.
        \label{fig:ideal-neighborhood}
    }
\end{figure}

From this proof, one easily observes that $\psi$ flows each vertex of $\Theta$ forward to another vertex in $\Theta$; thus, $f$ is a graph map.
Assuming $W$ and $W'$ are cellulated with uniform cutoff $m$ around each boundary component, 
we write their interior subgraphs as $\Delta = \bigcup_{i = - m}^m B_i$ and $\Delta' = \bigcup_{i = - m}^m B'_i$
in order to make the following observations.

\begin{proposition}
    \label{prop:properties-of-Theta}

    Suppose $W$ and $W'$ are cellulated as above.
    Then first return map $f$ of $\Theta = \Delta \cup \Delta' \cup S_+ \cup S_-$ has the following properties.
    \begin{enumprop}
        \item $f$ restricts to match $g$ on $\Delta - B_m$, and $g'$ on $\Delta' - B'_m$. \label{prop:properties-of-Theta.f-matches-g}
        \item $f$ induces isomorphisms $\cl{B_m \cup S_+} \to \cl{B'_{-m} \cup S_+}$ and $\cl{B'_m \cup S_-} \to \cl{B_{-m} \cup S_-}$. \label{prop:properties-of-Theta.isomorphisms-of-lambda}
    \end{enumprop}
\end{proposition}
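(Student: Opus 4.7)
The plan is to extend the case analysis used in the proof of \cref{prop:Theta}, where the first return map $f$ on each edge of $\Theta$ is determined by whether its forward trajectory remains in the truncated complex $\trunc{W} \cup \trunc{W'}$ or enters an ideal neighborhood $N(e_\infty)$. Both parts of the statement are essentially a bookkeeping consequence of this dichotomy.

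For part \refitem{prop:properties-of-Theta.f-matches-g}, every edge $e \subeq \Delta - B_m$ lies in some block $B_n$ with $-m \le n < m$. If $n \ne 0$, the associated 2-cell $\psi_g(e, (0, 1))$ is non-ideal and contained in $\trunc{W}$, so $f(e) = \psi^1(e) = g(e) \in B_{n+1} \subeq \Delta$. For $n = 0$, after enlarging the core to be proper via \cref{prop:big-core,prop:rebase-interior-subgraph}, we have $g(e) \subeq \Gamma_0 \cup B_1 \subeq \Delta$, and the relevant 2-cells again lie in $\trunc{W}$, so $f(e) = g(e)$ on the nose. Swapping the roles of $g$ and $g'$ delivers the parallel statement on $\Delta' - B'_m$.

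For part \refitem{prop:properties-of-Theta.isomorphisms-of-lambda}, apply the ideal neighborhood analysis of \cref{prop:Theta} once per edge $e_\infty \in \edges{\bd_+ W}$. The decoration-preserving property of $h$ pairs each $e_\infty$ with an edge $e'_\moo \in \edges{\bd_- W'}$ of matching type, period, and derived orientation. When $e_\infty$ is a subgraph edge, $N(e_\infty)$ is a rectangle in which $f(e_m) = e'_{-m}$. When $e_\infty$ is a joining edge of period $q$, the subdividing edges $s^1, \dots, s^q \in \edges{S_+}$ slice $N(e_\infty)$ into $q + 1$ transverse strips, producing the oriented cycle $e_m \mapsto s^1 \mapsto \cdots \mapsto s^q \mapsto (e'_{-m})^{-1}$. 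Assembling these local patterns over all $e_\infty \in \edges{\bd_+ W}$ yields a bijection on edges between $B_m \cup S_+$ and $B'_{-m} \cup S_+$. The induced map on vertices is combinatorial because $\psi$ sends vertices to vertices both across $\trunc{W} \cup \trunc{W'}$ (by part \refitem{prop:properties-of-Theta.f-matches-g}) and within each $N(e_\infty)$. The symmetric analysis applied to ideal neighborhoods in $\bd_+ W'$ gives the second isomorphism.

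The principal subtlety to address is the tracking of vertices that appear in $\cl{B_m \cup S_+}$ but not in $B_m$ itself, namely the $W$-side endpoints $v_{m-q+j}^0 \in B_{m-q+j}$ of subdividing edges in $S_+$ with $j < q$. These are handled cleanly by part \refitem{prop:properties-of-Theta.f-matches-g}: each such vertex is mapped by $f$ to $g(v_{m-q+j}^0) = v_{m-q+j+1}^0$, which coincides with the corresponding endpoint of $f(s^j) = s^{j+1}$ under the indexing of \cref{sec:cell-structure-W}. This alignment is precisely what upgrades the edge-level bijection to a cellular isomorphism, and is where the matching of derived orientations under $h$ enters substantively.
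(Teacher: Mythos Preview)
Your approach matches the paper's closely: part \refitem{prop:properties-of-Theta.f-matches-g} is handled identically, and for part \refitem{prop:properties-of-Theta.isomorphisms-of-lambda} both you and the paper establish the edge bijection via the ideal-neighborhood case analysis from \cref{prop:Theta}. The only real difference is in how the vertex-level isomorphism is certified. The paper finishes with a counting argument: having shown $f$ maps $\Lambda_1 \coloneqq \cl{B_m \cup S_+}$ onto $\Lambda_2 \coloneqq \cl{B'_{-m} \cup S_+}$ bijectively on edges, it verifies $\nverts{\Lambda_1} = \nverts{\Lambda_2}$ via the identity $\nverts{\cl{B_m} - \cl{S_+}} = \nverts{\sg{B_m}} = \nverts{\sg{B'_{-m}}} = \nverts{\cl{B'_{-m}} - \cl{S_+}}$ (the middle equality coming from the isomorphism of boundary subgraphs induced by $h$), and then concludes injectivity from finiteness.

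Your direct endpoint tracking is a reasonable alternative route, but as written it leaves a small gap: you check that the edge and vertex maps agree at the $W$-side endpoints $v_{m-q+j}^0$, yet never argue that $f$ is injective on $\verts{\Lambda_1}$, and an edge bijection for a graph map does not force this on its own. You also treat only the $W$-side endpoints of $S_+$; the $W'$-side endpoints need the symmetric observation (via part \refitem{prop:properties-of-Theta.f-matches-g} applied to $g'$). The quickest patch is to note that every point of $\Lambda_1$ flows forward through a region of $M$ where the extended semiflow is invertible, so the first return map is automatically injective there; alternatively, adopt the paper's vertex count.
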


\begin{proof}
    The first claim was proved in the previous proposition, and we record it here for future reference.
    For \refitem{prop:properties-of-Theta.isomorphisms-of-lambda}, we must show that $f$ restricts to an isomorphism from 
    $\Lambda_1 \coloneq \cl{B_m \cup S_+}$ to $\Lambda_2 \coloneq \cl{B'_{-m} \cup S_+}$.
    These being finite graphs, it will suffice to show the continuous function $\restr{f}{\Lambda_1}$ is an injection onto $\Lambda_2$.
    Since $B_m \homeo B_1$, \cref{prop:boundary-structure} implies $\restr{\psigoo}{B_m}$ gives rise to a bijection 
    $\edges{B_m} \leftrightarrow \edges{(\bd_+ W)}$, and it holds that $\edges{(B'_{-m})} \leftrightarrow \edges{(\bd_- W')}$ by the same token.
    What's more, $h$ induces a correspondence $\edges{(\bd_+ W)} \leftrightarrow \edges{(\bd_- W')}$, so that edges of $B_m$ correspond uniquely to edges of $B'_{-m}$ by transitivity.
    In particular, the subgraphs $\sg{B_m}$ and $\sg{B'_{-m}}$, which are isomorphic to subgraphs of the associated boundaries, must be isomorphic to one another.

    We first prove $f(\Lambda_1) = \Lambda_2$.
    Let $e_{\infty}$ be an edge of $\bd_+ W$, paired with $e'_\moo$ under $h$.
    As noted in the proof of \cref{prop:Theta}, its ideal neighborhood $N(e_\infty)$ has the cell structure of a rectangle whose top and bottom segments 
    relative to the upward oriented semiflow $\psi$ are edges of $B_m$ and $B'_{-m}$ respectively; 
    the reader should again refer to \cref{fig:ideal-neighborhood} for depictions. 
    We note that each 1-cell of $B_m$, $S_+$, and $B'_{-m}$ is contained in exactly one such ideal neighborhood.
    The first return map $f$ takes each labeled edge of $\Theta$ in this figure to the one directly above it.
    Thus, 1-cells of $\sg{B_m}$ map directly to 1-cells of $\sg{B'_{-m}}$, and edges of $J_m$ are sent to edges of $S_+$.
    At the same time, a subset of $S_+$ maps onto $J'_{-m}$, while the elements of $\edges{S_+}$ not sent to $B'_{-m}$ flow back into $S_+$.
    This implies $f(\Lambda_1 - \skele[0]{\Lambda_1}) = \Lambda_2 - \skele[0]{\Lambda_2}$, so the claim follows by the continuity of $f$ and discreteness of $\skele[0]{\Theta}$.

    All that remains is to verify injectivity of $\restr{f}{\Lambda_1}$ at vertices, a task which reduces to showing $\nverts{\Lambda_1} = \nverts{\Lambda_2},$ because both graphs are finite.
    As $\cl{S_+}$ is a subgraph of both $\Lambda_1$ and $\Lambda_2$, we can proceed by counting the 0-cells of $\cl{B_m} - \cl{S_+}$ and $\cl{B'_{-m}} - \cl{S_+}$.
    Observe that by \cref{fig:ideal-neighborhood.joining}, vertices of $\cl{B_m} \cap \cl{S_+}$ correspond to images under $f$ of initial vertices from edges of $J_m$.
    Similarly, each vertex of $\cl{B'_{-m}} \cap \cl{S_+}$ flows to the initial vertex of an edge in $J'_{-m}$.
    Thus
    \[
        \nverts{\cl{B_m} - \cl{S_+}} = \nverts{\sg{B_m}} = \nverts{\sg{B'_{-m}}} = \nverts{\cl{B'_{-m}} - \cl{S_+}},
    \]
    where the middle equality is a consequence of the isomorphism between $\sg{B_m}$ and $\sg{B'_{-m}}$, and the conclusion follows.
\end{proof}

\section{End-periodic homotopy equivalences}
Recall that a graph map $\map[f]{\gX}{\gY}$ is considered a \term{proper homotopy equivalence} (PHE) if it is proper, and there 
exists a proper map $\map[f']{\gY}{\gX}$ such that $f f'$ and $f' f$ are properly homotopic to the identity.
The goal of this section is to show that end-periodic homotopy equivalences automatically satisfy this property, 
and therefore represent elements of the mapping class group.
Since end-periodic maps are proper by definition, this will be a consequence of the following theorem.

\homotopyInverseTheorem

The first step toward proving \cref{thm:homotopy-inverse} is to establish a generating set for $\pi_1(\Gamma)$ using the following standard construction.
For an arbitrary subset $U \subeq \Gamma$, we define a \term{maximal tree} for $U$ to be a connected, acyclic subgraph of $U$ containing $\zskele{U}$, 
and a \term{maximal forest} for $U$ to be a subgraph of $U$ that meets each component in a maximal tree.
Given a maximal tree $T$ for $\Gamma$, and vertices $v_1, v_2 \in \verts{\Gamma}$, 
there is always exactly one reduced edge path in $T$ from $v_1$ to $v_2$, which we denote $\path{v_1, v_2}_T$.
Fixing a basepoint $\pt \in \verts{\Gamma}$, every maximal tree $T$ for $\Gamma$ 
corresponds to a free basis of $\pi_1(\Gamma, \pt)$ consisting of homotopy classes 
of loops $\path{\pt, \bd_0e}_T \cdot e \cdot \path{\bd_1e, \pt}_T$ 
taken as $e$ ranges over all oriented edges in $\Gamma - T$.

To make the action of $g$ on $\pi_1(\Gamma)$ explicit, we will be interested in maximal trees with the following technical property.

\begin{definition}[End-invariant maximal trees]
    Suppose $\map[g]{\Gamma}$ is an end-periodic graph map.
    A maximal tree $T$ for $\Gamma$ is \term{end invariant} under $g$ if there exists a well-chosen core $\Gamma_0$ such that 
    $g^{\pm 1}(\path{v_1, v_2}_T) = \path{g^{\pm 1}(v_1), g^{\pm 1}(v_2)}_T$ for any pair of vertices $v_1, v_2$ from the same 
    component of $\Gamma - \Gamma_0$ (that is, from the same nesting neighborhood).
\end{definition}

\begin{lemma}
    \label{lemma:maximal-tree}
    Let $\Gamma$ be an infinite connected graph and $\map[g]{\Gamma}$ an end-periodic graph map.
    There exists a maximal tree $T$ for $\Gamma$ which is end invariant.
\end{lemma}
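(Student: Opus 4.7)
The plan is to build $T$ inductively, starting with a carefully designed tree on a finite base and extending outward by iterated $g^{\pm 1}$-translation. By \cref{prop:big-core}, fix a proper well-chosen core $\Gamma_0$, which supplies the block decomposition $\seq[n \in \ZZ]{B_n}$ and nesting neighborhoods $\set{U_E : E \in \Ends(\Gamma)}$; for each end $E$, write $L_E \coloneq \bd_{\sgn E} \Gamma_0 \cap \cl{U_E}$ for its set of juncture vertices.

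The base of the construction is a maximal tree $T^{(1)}$ of the finite connected subgraph $\Gamma_0 \cup \cl{B_1} \cup \cl{B_{-1}}$, chosen so that for each leading end $E$ the restriction $T^{(1)} \cap \cl{B_{\sgn E} \cap U_E}$ forms a spanning subforest of $\cl{B_{\sgn E} \cap U_E}$ with one component per juncture vertex in $L_E$. The inductive step, for $n \ge 2$, sets
\[
    T^{(n)} \coloneq T^{(n-1)} \cup g\bigl(T^{(n-1)} \cap \cl{B_{n-1}}\bigr) \cup g^{-1}\bigl(T^{(n-1)} \cap \cl{B_{-(n-1)}}\bigr),
\]
with translated edges processed in the same order as at step $n-1$. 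Since $\Gamma_0$ is proper, $g(\Gamma_0) \subseteq \Gamma_0 \cup \cl{B_1}$; combined with the fact that $g$ acts as an edge-preserving homeomorphism on $\cl{\Gamma_+}$ (and $g^{-1}$ on $\cl{\Gamma_-}$), this guarantees each translated edge extends the tree: its parent endpoint is either already in $T^{(n-1)}$ or, by the processing order, was attached earlier in step $n$. Setting $T \coloneq \bigcup_n T^{(n)}$ yields a maximal tree of $\Gamma$.

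To verify end-invariance, note that by construction
\[
    T \cap \cl{U_E} = \bigcup_{k \ge 0} g^{k p_E}\bigl(T^{(1)} \cap \cl{B_{\sgn E} \cap U_E}\bigr)
\]
(with $g^{-1}$ replacing $g$ for repelling $E$), a $g^{\pm p_E}$-equivariant subforest with $|L_E|$ chains, each anchored at a juncture. For $v_1, v_2 \in U_E$, the reduced path $\path{v_1, v_2}_T$ decomposes into translates of base-level arcs, possibly joined by a $T^{(1)}$-subpath in $\Gamma_0$ that bridges two distinct junctures; applying $g$ shifts each translated arc by one block and carries any $\Gamma_0$-bridge into $g(\Gamma_0) \subseteq \Gamma_0 \cup \cl{B_1}$, where after free reduction one recovers the unique tree path $\path{g(v_1), g(v_2)}_T$. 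The main obstacle is the interlocking constraint on the base tree $T^{(1)}$: it must simultaneously span the finite base as a tree and split correctly across each $\cl{B_{\pm 1} \cap U_E}$ so that the subsequent equivariant translation never introduces cycles or strands a vertex, a compatibility achievable by a direct Euler-characteristic-driven choice but requiring careful combinatorial bookkeeping to verify.
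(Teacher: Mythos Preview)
Your construction has a genuine gap in the verification of end-invariance. By requiring $T^{(1)} \cap \cl{B_{\pm 1} \cap U_E}$ to be a spanning forest with one component \emph{per juncture vertex}, you force $T \cap U_E$ to be disconnected whenever $|L_E| > 1$ (the components, being pieces of a single tree $T$, can never merge inside $U_E$ without creating a cycle). Consequently, for vertices $v_1, v_2 \in U_E$ lying in different components of $T \cap U_E$, the reduced path $\path{v_1, v_2}_T$ must pass through $\Gamma_0$. Now the definition of end-invariance requires the literal equality $g(\path{v_1, v_2}_T) = \path{g(v_1), g(v_2)}_T$ as edge paths, but on $\Gamma_0$ the map $g$ is neither a homeomorphism nor tree-preserving: it is an arbitrary graph map, and there is no reason for $g$ applied to your $\Gamma_0$-bridge to land in $T$ at all, let alone to give the reduced tree path. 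Your appeal to ``free reduction'' cannot rescue this, both because the image need not lie in $T$ and because the definition demands equality before any reduction. Enlarging the core does not help either, since the disconnectedness of $T \cap U_E$ persists at every depth.

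The paper's proof avoids exactly this trap by arranging that $T \cap U_E$ is a single tree for each nesting neighborhood. It does so by choosing a maximal \emph{tree} (one component) for each $\sg{B_{\pm 1}} \cap U_E$ and attaching it to the core via a \emph{single} critical joining edge $e_E$; the remaining joining edges are left out of $T$. This guarantees that $\path{v_1, v_2}_T$ for $v_1, v_2 \in U_E$ never leaves $U_E$, so only the combinatorial action of $g^{\pm 1}$ on $\Gamma_\pm$ is ever invoked. A second ingredient you are missing is the choice of $T_0 = T \cap \Gamma_0$: the paper arranges $T_0$ to contain both $g^{-1}(F_1 \cup D_1)$ and $g(F_{-1} \cup D_{-1})$, which is precisely what makes $g^{\pm 1}(T - \Gamma_0) \subseteq T$ hold at the interface with the core. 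Your construction imposes no analogous constraint on $T^{(1)} \cap \Gamma_0$.
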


\begin{proof}
    Let $\Gamma_0$ be a core for $g$, enlarged once if necessary to ensure $g(B_{-1})$ and $g^{-1}(B_1)$ are disjoint.
    Take $F_1$ and $F_{-1}$ to be maximal forests for the largest subgraphs of $B_1$ and $B_{-1}$ respectively.
    The components of $F_{\pm 1}$ are maximal trees for components of $\sg{B_{\pm 1}}$, 
    which by \cref{prop:Bn-components} must be of the form $\sg{B_{\pm 1}} \cap U_E$, where $E$ is a leading end.
    For every such $E$, we select a \term{critical edge} $e_E$ from $J_{\pm 1} \cap U_E$ 
    (which is nonempty by \cref{rmk:at-least-one-joining-edge-per-component}) 
    whose initial vertex is in $\Gamma_0$ and whose terminal vertex is in $F_{\pm 1} \cap U_E$.
    With $D_{\pm 1}$ designating the union of critical edges over all components of $F_{\pm 1}$, 
    we see that $g$ sends the closure of $F_{-1} \cup D_{-1}$ into $\Gamma_0$ homeomorphically,
    while $g^{-1}$ does likewise for the closure of $F_1 \cup D_1$.
    Since $g^{-1}(F_1 \cup D_1)$ and $g(F_{-1} \cup D_{-1})$ are disjoint, 
    their union meets $\Gamma_0$ in an acyclic subgraph, and, letting $T_0$ be a maximal tree for $\Gamma_0$ containing this union, 
    it follows at once that
    \[
        T_1 \coloneq T_0 \cup (F_1 \cup D_1) \cup (F_{-1} \cup D_{-1})
    \]
    is a maximal tree for $\Gamma_1 = \Gamma_0 \cup B_1 \cup B_{-1}$.
    For all $n > 1$, we now inductively set $F_{\pm(n + 1)} \coloneq g^{\pm 1}(F_\pmn)$ 
    and $D_{\pm(n + 1)} \coloneq g^{\pm 1}(D_\pmn)$ in order to define
    \[
        T_{n + 1} \coloneq T_n \cup (F_{n + 1} \cup D_{n + 1}) \cup (F_{-n - 1} \cup D_{-n - 1}).
    \]
    Because $g$ is end-periodic, each $F_\pmn$ is a maximal forest for the largest subgraph of $B_\pmn$.
    Moreover, $D_\pmn$ contains exactly one edge connecting $\Gamma_{n - 1}$ to any given component of $F_\pmn$.
    Thus, $T_n$ is a maximal tree for $\Gamma_n$, and the increasing union 
    $T \coloneq \bigcup_{n \ge 0} T_n$ represents a maximal tree for $\Gamma$.

    To show $T$ is end invariant, we first observe that components of $T - \Gamma_0$ represent maximal trees for nesting neighborhoods.
    Indeed, a path between any two vertices in the same component of $\Gamma - \Gamma_0$ 
    (for concreteness say $v_1 \in B_1 \cap U_E$ and $v_2 \in B_n \cap U_E$) 
    can be obtained by bridging the sequence of unique edges 
    $D_2 \cap U_E, \dots, D_n \cap U_E$ with subpaths from $F_1 \cap U_E, \dots, F_n \cap U_E$.
    Furthermore, our careful selection of $T_0$ and subsequent construction of $T_n$ ensures that
    $g(F_\pmn \cup D_\pmn) \subeq F_{\pmn + 1} \cup D_{\pmn + 1} \subeq T$
    and
    $g^{-1}(F_\pmn \cup D_\pmn) \subeq F_{\pmn - 1} \cup D_{\pmn - 1} \subeq T$
    for all $n \ge 1$.
    Thus, $g^{\pm 1}(T - \Gamma_0) \subeq T$.
    End-invariance is an immediate consequence: 
    if $v_1$ and $v_2$ lie in the same component of $\Gamma - \Gamma_0$, 
    then the path $\path{v_1, v_2}_T$ remains in $T - \Gamma_0$.
    Thus its homeomorphic image under $g$ is a reduced path in $T$ between $g(v_1)$ to $g(v_2)$, 
    equal to $\path{g(v_1), g(v_2)}_T$ by uniqueness.
    The same holds for the image of $\path{v_1, v_2}_T$ under $g^{-1}$.
\end{proof}

Armed with an end-invariant maximal tree, we can now prove our theorem.

\begin{proof}[Proof of \Cref{thm:homotopy-inverse}]
    Fix a base vertex $\pt$ in $\Gamma$, and if necessary, conjugate $g$ by a compactly supported homotopy equivalence $\Gamma \to \Gamma$ to ensure $g(\pt) = \pt$.
    Since the conjugated function matches the original outside some compact set, it is still end-periodic.
    Moreover, the function $g'$ we build to match the new $g^{-1}$ on neighborhoods of ends will also match the inverse of the original on a suitable domain.

    As a homotopy equivalence fixing the basepoint, $g$ induces an automorphism $g_*$ of $\pi_1(\Gamma, \pt)$ whose inverse we denote $\gsinv$.
    Construct as in \cref{lemma:maximal-tree} a maximal tree $T$ that is end-invariant with respect to a core $\Gamma_0$ for $g$, and, 
    letting $\path{u, v}$ denote the path from $u$ to $v$ in $T$, 
    put $\gamma_e \coloneq \path{\pt, \bd_0e} \cdot e \cdot \path{\bd_1e, \pt}$ for each oriented edge $e \in \edges{(\Gamma - T)}$.
    Because $T$ is maximal, the collection of homotopy classes $\{[\gamma_e] : e \in \cells[1]{\Gamma - T}\}$ is a free basis for $\pi_1(\Gamma, \pt)$.

    Recall from the proof of \cref{lemma:maximal-tree} that for each end $E$ of $\Gamma$, the subtree $T \cap \sg{U_E}$ connects to $T_0 = T \cap \Gamma_0$ 
    via a unique critical edge $e_E$ in $J_{\pm 1} \cap U_E$.
    Thus, for any $e \in \edges{(\sg{U_E} - T)}$, we have
    \[
        \gamma_e \homot \path{\pt, \bd_0e_E} \cdot \path{\bd_0e_E, \bd_0e} \cdot e \cdot \path{\bd_1e, \bd_0e_E} \cdot \path{\bd_0e_E, \pt}.
    \]
    Since all but the first and last paths are contained in $\sg{U_E}$, and $T$ is end invariant, it follows
    \begin{align*}
        g(\gamma_e) & \homot g(\path{\pt, \bd_0e_E}) \cdot \path{g(\bd_0e_E), g(\bd_0e)} \cdot g(e) \cdot \path{g(\bd_1e), g(\bd_0e_E)} \cdot g(\path{\bd_0e_E, \pt}) \\
                    & \homot g(\path{\pt, \bd_0e_E}) \cdot \path{g(\bd_0e_E), \pt} \cdot \gamma_{g(e)} \cdot \path{\pt, g(\bd_0e_E)} \cdot g(\path{\bd_0e_E, \pt}).
    \end{align*}
    Using $\sigma_{g(E)}$ to denote the loop $g(\path{\pt, \bd_0e_E}) \cdot \path{g(\bd_0e_E), \pt}$ as in \cref{fig:maxtree}, this shows $g_*([\gamma_e]) = [\sigma_{g(E)}] [\gamma_{g(e)}] [\sigma_{g(E)}]^{-1}$,
    and consequently $\gsinv([\gamma_{g(e)}]) = [\tau_{g(E)}]^{-1} [\gamma_e] [\tau_{g(E)}]$, where $\tau_{g(E)}$ is a representative of $\gsinv([\sigma_{g(E)}])$.
    Reindexing, we obtain
    \[
        \gsinv([\gamma_e]) = [\tau_E^{-1} \cdot \gamma_{g^{-1}(e)} \cdot \tau_E]
    \]
    for all $e \in \edges{(\sg{U_E} - T)}. $

    \begin{figure}[htpb]
        \centering
        \includegraphics[width=0.8\textwidth]{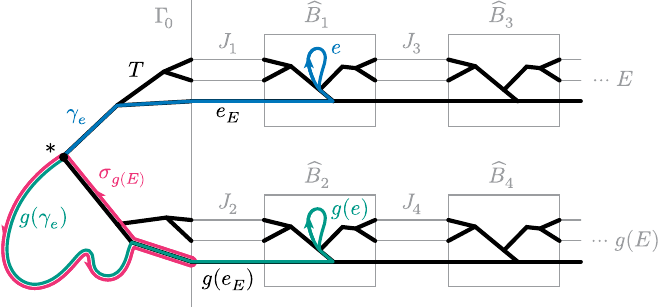}
        \caption{Schematic for the proof of \cref{thm:homotopy-inverse}.}
        \label{fig:maxtree}
    \end{figure}

    We are now ready to define the homotopy inverse $\map[g']{\Gamma'}$.
    Per the theorem statement, we want $g'$ to restrict to $g^{-1}$ in a neighborhood of each end, so we begin by declaring that $g'(x) = g^{-1}(x)$ for all points $x \in  \sg{\Gamma_+} \cup \sg{\Gamma_-}$.
    Inside the core, $g'$ will fix the subtree $T_0$.
    The combination of these two directives means $g'$ acts as follows on the vertices of $\Gamma$:
    \[
        g'(v) = \begin{cases}
            v         & \text{if } v \in \verts{\Gamma_0}             \\
            g^{-1}(v) & \text{if } v \in \verts{(\Gamma - \Gamma_0)}.
        \end{cases}
    \]

    We still need to define how $g'$ maps points of $\Gamma - (\sg{\Gamma_+} \cup \sg{\Gamma_-} \cup T_0)$, but since this is a union of open 1-cells, and the only requirement left to satisfy is that $g'_* = \gsinv$, it will be enough specify the image of each remaining edge $e$ as a path from $g'(\bd_0 e)$ to $g'(\bd_1 e)$.
    Any such edge lies in either $\Gamma_0 - T$ or $J_1 \cup J_{-1}$, and those outside the core must be contained in some nesting neighborhood $U_E$.
    Letting $\eta_e$ denote a reduced representative of $\gsinv([\gamma_e])$ for any edge $e$ outside $T$, we set $g'$ as follows:
    \[
        g'(e) = \begin{cases}
            \path{\bd_0e, \pt} \cdot \eta_e \cdot \path{\pt, \bd_1e}                            & \text{if } e \in \edges{(\Gamma_0 - T)}            \\
            \path{\bd_0e_E, \pt} \cdot \tau_E^\inv \cdot \path{\pt, g^{-1}(\bd_1 e_E)}          & \text{if } e = e_E \text{, a critical edge}        \\
            \path{\bd_0e, \pt} \cdot \eta_e \cdot \tau_E^\inv \cdot \path{\pt, g^{-1}(\bd_1 e)} & \text{if } e \in \edges{(J_{\pm 1} \cap U_E - T)}.
        \end{cases}
    \]

    Having fully determined $g'$ up to homotopy, it remains to verify that $g'_* = \gsinv$.
    We will go about this directly by considering the homotopy class of $g'(\gamma_e)$ for each type of edge $e \in \edges{(\Gamma - T)}$.
    There are three cases to consider, depending on whether $e$ lies in $\Gamma_0$, $J_{\pm 1}$, or $\sg{\Gamma_\pm}$.

    If $e \in \edges{(\Gamma_0 - T)}$, then the first and final subpaths of $\gamma_e = \path{\pt, \bd_0e} \cdot e \cdot \path{\bd_1 e, \pt}$ lie in $T_0$, and are therefore fixed by $g'$.
    It follows that
    \[
        g'(\gamma_e) \homot \path{\pt, \bd_0e} \cdot \path{\bd_0e, \pt} \cdot \eta_e \cdot \path{\pt, \bd_1e} \cdot \path{\bd_1 e, \pt} \homot \eta_e.
    \]

    Alternatively, if $e$ is a non-critical edge contained in $J_{\pm 1} \cap U_E$, we have
    \[
        \gamma_e \homot \path{\pt, \bd_0e} \cdot e \cdot \path{\bd_1e, \bd_1e_E} \cdot e_E^{-1} \cdot \path{\bd_0e_E, \pt},
    \]
    where $\path{\pt, \bd_0e}$ and $\path{\bd_0e_E, \pt}$ are in $T_0$, while $\path{\bd_1e, \bd_1e_E}$ remains in $\sg{U_E}$.
    Because $g'$ fixes the first two paths while sending the last one to $g^{-1}(\path{\bd_1e, \bd_1e_E}) = \path{g^{-1}(\bd_1 e), g^{-1}(\bd_1 e_E)}$,
    the expansion of $g'(\gamma_e)$ is also homotopic to $\eta_e$ after cancellation:
    \begin{align*}
        g'(\gamma_e) \homot \;
               & \path{\pt, \bd_0e} \cdot \left(\path{\bd_0e, \pt} \cdot \eta_e \cdot \tau_E^{-1} \cdot \path{\pt, g^{-1}(\bd_1 e)}\right) \cdot \path{g^{-1}(\bd_1 e), g^{-1}(\bd_1 e_E)} \\
               & \hphantom{\path{\pt, \bd_0e}} \cdot \left(\path{g^{-1}(\bd_1 e_E), \pt} \cdot \tau_E \cdot \path{\pt, \bd_0e_E}\right) \cdot \path{\bd_0e_E, \pt}                         \\
        \homot \; & \eta_e.
    \end{align*}

    Finally, we can write any $e \in \edges{(\sg{\Gamma_\pm} - T)}$ contained in the neighborhood $\sg{U_E}$ as
    \[
        \gamma_e \homot \path{\pt, \bd_0e_E} \cdot e_E \cdot \alpha_e \cdot e_E^\inv \cdot \path{\bd_0e_E, \pt},
    \]
    where $\alpha_e$ designates the loop $\path{\bd_1e_E, \bd_0e} \cdot e \cdot \path{\bd_1e, \bd_1e_E}$.
    Since $\alpha_e$ is a path in $\sg{U_E}$, we see
    \[
        g'(\alpha_e) = g^\inv (\alpha_e) \homot \path{g^{-1}(\bd_1 e_E), g^{-1}(\bd_0 e)} \cdot g^{-1}(e) \cdot \path{g^{-1}(\bd_1 e), g^{-1}(\bd_1 e_E)},
    \]
    and consequently,
    \[
        \path{\pt, g^\inv(\bd_1 e_E)} \cdot g'(\alpha_e) \cdot \path{g^{-1}(\bd_1 e_E), \pt} \homot \gamma_{g^{-1}(e)}.
    \]
    Thus,
    \begin{align*}
        g'(\gamma_e) \homot \; & \path{\pt, \bd_0e_E} \cdot \left(\path{\bd_0e_E, \pt} \cdot \tau_E^{-1} \cdot \path{\pt, g^{-1}(\bd_1 e_E)}\right) \cdot g'(\alpha_e)               \\
                               & \hphantom{\path{\pt, \bd_0e_E}} \cdot \left(\path{g^{-1}(\bd_1 e_E), \pt} \cdot \tau_E \cdot \path{\pt, \bd_0e_E}\right) \cdot \path{\bd_0e_E, \pt} \\
        \homot \;              & \tau_E^\inv \cdot \gamma_{g^{-1}(e)} \cdot \tau_E,
    \end{align*}
    and since this loop represents $\gsinv([\gamma_e])$, the proof is complete.
\end{proof}

\subsection{End-periodic mapping classes and embeddings of mapping tori}
If $\map[g, g']{\Gamma}$ are as in the statement of \cref{thm:homotopy-inverse},
then $g' g$ and $g g'$ must be properly homotopic to the identity, since they match it everywhere outside a compact set.
Thus, every end-periodic homotopy equivalence is a PHE of its domain.
In light of this, we call $\zeta \in \Maps(\Gamma)$ an \term{end-periodic mapping class} if it has an end-periodic representative.
This section is devoted to proving the following theorem about end-periodic mapping classes.

\embeddingTheorem

\begin{proof}
    The proof relies on three key lemmas, to be established formally in the sections to follow.
    The first of these, \cref{lemma:coupling-possible}, says that we can find compatible end-periodic homotopy equivalences $\map[g, g']{\Gamma}$ 
    such that $g$ is a representative of $\zeta$.
    As such, we can glue their compactified mapping tori $W$ and $W'$ together along a decoration preserving map $\map[h]{\bd_\pm W}{\bd_\mp W'}$ to form the $h$-couple $M$.

    By \cref{prop:Theta}, we already know that $M$ can be realized as a finite mapping torus in many ways, namely as $Z_f$, 
    where $f$ is the first return map of any principal subgraph.
    The next ingredient in the construction is \cref{lemma:f-homotopy-equivalence}, 
    which confirms the existence of a principal subgraph $\Theta$ for $M$ whose first return map is a homotopy equivalence.
    Thus, the natural inclusion $Z \to W \to M$ is a flow preserving embedding of mapping tori.

    It remains to show $\map{W}{M}$ is $\pi_1$-injective.
    To do so, we appeal to \cref{lemma:boundary-inclusion-pi1-injective}, 
    which says components of $\bd W \sim \bd W'$ include $\pi_1$-injectively in both $W$ and $W'$. 
    It follows that $\pi_1(M)$ splits as a graph of groups whose vertices are $\pi_1(W)$ and $\pi_1(W')$, 
    and whose edges correspond to the fundamental groups of boundary components.
    As such, the standard tools of Bass--Serre theory tell us the map $\map{\pi_1(W)}{\pi_1(M)}$ is an embedding, 
    and we have our result.
\end{proof}

\subsubsection{Finding a compatible homotopy equivalence}
We now set about proving \cref{lemma:coupling-possible}, which allows us to apply the coupling construction of 
\cref{sec:coupling} in the context of any end-periodic mapping class.

\begin{lemma}
    \label{lemma:coupling-possible}
    If $\Gamma^*$ is an infinite graph with finitely many ends, and $\zeta \in \Maps(\Gamma^*)$ is an end-periodic mapping class,
    then there exist compatible end-periodic homotopy equivalences $\map[g, g']{\Gamma}$ such that $g \in \zeta$, and $\Gamma$ is a graph with no valence-1 vertices. 
\end{lemma}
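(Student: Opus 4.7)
The plan is to produce the desired pair \((g, g')\) by first conditioning the ambient graph and then building \(g'\) as a symmetric partner to a representative of \(\zeta\).

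First, I would reduce to a graph without valence-1 vertices. Starting from an end-periodic representative \(g_0 \in \zeta\) on \(\Gamma^*\), I would iteratively collapse each maximal finite tree attached at a valence-1 vertex via a compactly supported proper homotopy equivalence, handled simultaneously on each orbit of ends so that the collapses respect the end-periodic structure. This yields a graph \(\Gamma\) properly homotopy equivalent to \(\Gamma^*\) with no valence-1 vertices and an end-periodic representative \(g \in \zeta\) acting on \(\Gamma\).

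Second, I would take \(g'\) to be an end-periodic homotopy inverse of \(g\) furnished by \cref{thm:homotopy-inverse}, agreeing with \(g^{-1}\) on neighborhoods of the ends. Fixing a well-chosen proper core \(\Gamma_0\) for \(g\) contained in the region where the inverse acts literally as \(g^{-1}\), the same \(\Gamma_0\) serves as a well-chosen proper core for \(g'\) with attracting and repelling roles swapped. The resulting block decompositions satisfy \(B_n^{g'} = B_{-n}^g\) for all \(n \ne 0\), every end has the same period under \(g\) and \(g'\), and by \cref{prop:boundary-structure} the boundary components of \(W_g\) and \(W_{g'}\) associated to a common leading end are canonically isomorphic as unoriented decorated graphs: their partitions into subgraph and joining edges agree and their subdivision counts match.

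The subtle point is matching derived orientations. A direct computation from \cref{rmk:derived-orientations} shows that, under the canonical identification, the derived orientation of an ideal joining edge in \(\bd_+ W_g\) is the reverse of the derived orientation on the corresponding edge in \(\bd_- W_{g'}\), and likewise across the other boundary pair. I would resolve this by taking the candidate \(h\) component-wise, so that on each boundary component \(\Sigma_E\) it swaps the ideal juncture class \([v]\) with the ideal subgraph class \([g^{\pm \pd{E}}(v)]\) joined to it by a joining edge. To guarantee this swap extends to a graph automorphism of each boundary component, I would, if necessary, enlarge \(g\) within its mapping class by inserting small symmetric gadgets inside each nesting neighborhood so that every boundary component admits the required \(\ZZ/2\) symmetry. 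The no-valence-1 hypothesis on \(\Gamma\) is used precisely here, to rule out the principal local obstruction (a pendant vertex being swapped with a higher-valence one) to such an automorphism.

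I expect the hardest step to be this last one: arranging an endpoint-swapping isomorphism that extends to a genuine graph automorphism on every boundary component simultaneously while keeping \(g\) in \(\zeta\) and \(\Gamma\) free of valence-1 vertices. Most of the technical work should lie in this combinatorial refinement, while the reduction to a graph without valence-1 vertices and the construction of \(g'\) follow fairly directly from \cref{thm:homotopy-inverse} and \cref{prop:boundary-structure}.
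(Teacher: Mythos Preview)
Your diagnosis is accurate up through the identification of the key obstruction: the canonical isomorphism $\bd_\pm W_g \cong \bd_\mp W_{g'}$ reverses derived orientations on joining edges, and this is indeed the whole difficulty. Your proposed remedy, however, has a genuine gap. You want a component-wise automorphism of each $\Sigma_E$ that swaps the two endpoints of every joining edge, and you suggest obtaining this by ``inserting small symmetric gadgets inside each nesting neighborhood'' to force a $\ZZ/2$ symmetry. This step is not carried out, and it is not clear it can be: such an automorphism must simultaneously swap the endpoints of \emph{every} joining edge while fixing the isomorphism type of the subgraph part, and there is no reason a generic boundary component admits one. Moreover, inserting gadgets alters $\Gamma$ (and hence the ends and the mapping class data), so the phrase ``while keeping $g \in \zeta$'' needs justification you do not supply. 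The no-valence-1 hypothesis does not by itself produce the required symmetry.

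The paper sidesteps this entirely with a much simpler idea: rather than forcing a $\ZZ/2$ symmetry on an arbitrary boundary graph, it first replaces $g$ within its mapping class by a \emph{boundary-collapsed} representative, meaning one whose nonzero blocks have a single vertex per component (achieved by collapsing a maximal forest in each $\sg{B_n}$, which is a proper homotopy equivalence). Every boundary component $\Sigma_E$ is then a rose on one vertex, and the orientation-reversing automorphism $\sigma$ that inverts each joining edge is automatic. The removal of valence-1 vertices is done \emph{after} boundary-collapsing, at which point only finitely many remain (all in the core), so no orbit bookkeeping is needed. Your overall architecture is right; what is missing is this collapsing step, which replaces your hardest step with a triviality.
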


The lemma is proved in two steps.
Suppose initially that $\map[g]{\Gamma}$ is any end-periodic map whose mapping class is $\zeta$.
Since it is a homotopy equivalence, we may construct as in \cref{thm:compactified-mapping-torus} an end-periodic homotopy inverse $g'$.
We observe this function is almost compatible with $g$, but has one major shortcoming.

\begin{proposition}
    Let $\map[g]{\Gamma}$ be an end-periodic homotopy equivalence with inverse $\map[g']{\Gamma'}$ constructed as in \cref{thm:homotopy-inverse}.
    There exists a coorientation reversing isomorphism $\map[\varphi]{\bd_\pm W_g}{\bd_\mp W_{g'}}$ which preserves ideal joining edges but reverses derived orientations 
    relative to a natural decoration of these boundaries.
\end{proposition}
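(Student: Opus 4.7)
The plan is to exploit the fact that the homotopy inverse $g'$ produced in \cref{thm:homotopy-inverse} agrees with $g^{-1}$ on a neighborhood of every end. Consequently, the dynamics of $g$ and $g'$ near infinity are exact inverses, and this should force the boundaries of their compactified mapping tori to be canonically identifiable once attracting/repelling labels are swapped.

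First I would set up matching nesting data for both maps. Choose a well-chosen core $\Gamma_0$ for $g$, and after enlarging it if necessary (so that the locus where $g' \ne g^{-1}$ is strictly inside it), show that the same collection $\{U_E\}$ serves as a well-chosen system of nesting neighborhoods for $g'$, with the roles of attracting and repelling exchanged: $\Ends_\pm(\Gamma) = \Ends_\mp'(\Gamma)$. In particular the nesting domains satisfy $\Gamma_\pm = \Gamma'_\mp$, the periods $\pd{E}$ are the same for $g$ and $g'$, and the two leading-end data agree up to sign. From $g'|_{U_E} = g^{-1}|_{U_E}$ it follows that the associated block decompositions are related by $B_n = B'_{-n}$ for all $n$, and hence $\sg{B_n} = \sg{B'_{-n}}$ and $J_n = J'_{-n}$, with the standard orientation on each joining edge (as an abstract edge of $\Gamma$) being the same for both maps, since the convention only depends on which end of the edge meets the subgraph side.

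Next I would define $\varphi$ componentwise. By \cref{prop:boundary-structure}, every component of $\bd_+ W_g$ has the form $\Sigma_E = \cl{B_1 \cap U_E}/(v \sim g^{\pd{E}}(v))$ for a leading attracting end $E$ of $g$. The same $E$ is a leading repelling end of $g'$, and the corresponding component $\Sigma'_E$ of $\bd_- W_{g'}$ equals $\cl{B'_{-1} \cap U_E}/(v \sim (g')^{-\pd{E}}(v))$. Since $B'_{-1} = B_1$ and $(g')^{-\pd{E}} = g^{\pd{E}}$ on $U_E$, these two quotients are literally the same graph. Define $\varphi$ on this component to be the identity on $\cl{B_1 \cap U_E}$ passed to the quotient; do the analogous thing on $\bd_- W_g$ versus $\bd_+ W_{g'}$. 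Verifying that $\varphi$ is a well-defined isomorphism is then routine, as is checking the coorientation reversal: positive boundaries of $W_g$ correspond to negative boundaries of $W_{g'}$ simply by the swap of attracting/repelling ends.

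The content of the proposition is the statement about decorations. Under $\varphi$, an ideal subgraph edge maps to an ideal subgraph edge and an ideal joining edge to an ideal joining edge, because the partition of $\edges{B_1}$ into subgraph and joining edges coincides with the partition of $\edges{B'_{-1}}$. The subdivision on each joining edge has the same $\pd{E}$ vertices because the periods agree. The nontrivial point, and the step I expect to be the main obstacle, is tracking the derived orientations. Here I would invoke \cref{rmk:derived-orientations}: on $\bd_+ W_g$ the derived orientation on an ideal joining edge is opposite its standard orientation, whereas on $\bd_- W_{g'}$ it coincides with the standard orientation. Since the standard orientations on the two sides agree (by the previous paragraph, as abstract edges of $\Gamma$ they are literally the same edge with the same orientation convention), applying $\varphi$ carries the derived orientation on the $+$ side to the reverse of the derived orientation on the $-$ side, exactly as claimed. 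This discrepancy is precisely what prevents $\varphi$ from being decoration preserving, and hence motivates the further modification needed before coupling can be carried out.
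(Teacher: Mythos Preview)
Your proposal is correct and follows essentially the same approach as the paper: choose a core on whose complement $g'=g^{-1}$, observe $B_n=B'_{-n}$ so that the quotient descriptions of $\bd_\pm W_g$ and $\bd_\mp W_{g'}$ from \cref{prop:boundary-structure} literally coincide, and then invoke \cref{rmk:derived-orientations} to see that derived orientations get reversed. The paper's proof is slightly terser (defining $\varphi$ via $\psi_g^\infty(e)\mapsto\psi_{g'}^\infty(e)$ rather than through the quotient identification), but the argument is the same.
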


\begin{proof}
    By \cref{thm:homotopy-inverse}, we can find a well-chosen core $\Gamma_0$ for $g$ such that $g'$ restricts to $g^\inv$ on $\Gamma - \Gamma_0$.
    It follows that $\Gamma_0$ is also a core for $g'$, and writing $\seq[n \in \ZZ]{B_n}$ and $\seq[n \in \ZZ]{B'_n}$ for the 
    block decompositions associated to $\Gamma_0$ induced by $g$ and $g'$ respectively, we observe that $B_n = B'_{-n}$ for all $n \in \ZZ$.
    Letting $W = W_g$ and $W' = W_{g'}$, we obtain by \cref{prop:boundary-structure} a homeomorphism $\map[\varphi]{\bd_\pm W}{\bd_\mp W'}$ 
    mapping $\psi_g^\infty(e) \mapsto \psi_{g'}^\infty(e)$ for each edge $e$ of $B_{1} \cup B_{-1}$.

    Not only does $\varphi$ preserve the ideal joining edges of each boundary component, 
    it can be arranged to respect the subdivisions of these edges in the decorated boundary graphs.
    Indeed, as $e$ has equal period $q$ under both $g$ and $g'$, we see that $\psi_g^\infty(e)$ and $\psi_{g'}^\infty(e)$ 
    represent joining edges of their respective boundaries which are subdivided exactly $q$ times.
    One can compose $\varphi$ with an isotopy supported on $\psi_g^\infty(e)$ to ensure such subdividing vertices align.
    By \cref{rmk:derived-orientations}, the derived orientation on $\psi_g^\infty(e)$ as an edge of $\bd_\pm W$
    must be opposite the derived orientation $\psi_{g'}^\infty(e)$ obtains as an edge of $\bd_\mp W'$, 
    so $\varphi$ reverses decorated edges.
\end{proof}

As things stand, $g$ and $g'$ are not necessarily compatible, because $\varphi$ is not decoration preserving. 
However, one way forward would be to find a component-wise automorphism $\map[\sigma]{\bd W}$ that reverses the orientation of every ideal joining edge,
since the composition $\varphi \sigma$ would then respect derived orientations.
Since a priori there is no reason to expect $\bd W$ should posses such an automorphism, ensuring its existence requires some control over the structure of boundary components.
The observation that an edge-reversing homeomorphism always exists for a graph with only one vertex motivates our next definition.

\begin{definition}[Boundary-collapsed maps]
    We call the end-periodic map $\map[g]{\Gamma}$ \term{boundary-collapsed} if each component of $\bd W$ has exactly one vertex.
\end{definition}

By \cref{prop:boundary-structure}, it is clear that $g$ is boundary-collapsed if and only if it induces a block decomposition 
of $\Gamma$ wherein each nonzero block has one vertex per component.
Our claim is that every end-periodic mapping class contains a boundary-collapsed representative.

\begin{lemma}
    \label{lemma:collapsed-map-exists}
    Let $\Gamma$ be an infinite connected graph and $\map[g]{\Gamma}$ an end-periodic graph map.
    Then there exists an end-periodic, boundary-collapsed graph map $\map[g^\dag]{\Gamma^\dag}$ such that $\Gamma$ is proper homotopy equivalent to $\Gamma^\dag$, and the diagram
    \begin{center} 
        \includegraphics[]{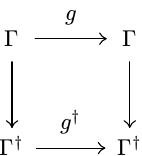}       
    \end{center}
    commutes up to homotopy.
    Furthermore, $\Gamma^\dag$ can be arranged to have no valence-1 vertices.
\end{lemma}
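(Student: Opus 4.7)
The plan is to realize $\Gamma^\dag$ as a proper homotopy equivalent tree collapse of $\Gamma$, designed so that each outermost shell $B_{\pm 1} \cap U_E$ is crushed to a single vertex. First I would fix a proper core $\Gamma_0$ and well-chosen nesting neighborhoods for $g$. By \cref{prop:Bn-components}, each component $B_{\pm 1} \cap U_E$ is connected, and since no path in this set can traverse a joining edge through the juncture, the subgraph $\sg{B_{\pm 1} \cap U_E}$ is connected as well. For each leading end $E$, choose a maximal tree $T_E^\pm \subseteq \sg{B_{\pm 1} \cap U_E}$ spanning every vertex of $B_{\pm 1} \cap U_E$, and assemble the locally finite forest
\[
    F \coloneq \bigcup_{E \in \Ends'_+(\Gamma)} \bigcup_{n \ge 0} g^n(T_E^+) \;\cup\; \bigcup_{E \in \Ends'_-(\Gamma)} \bigcup_{n \ge 0} g^{-n}(T_E^-),
\]
whose component trees are pairwise disjoint since each lies in a distinct block component. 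Set $\Gamma^\dag \coloneq \Gamma / F$ and let $\map[p]{\Gamma}{\Gamma^\dag}$ be the resulting quotient, which is a proper homotopy equivalence.

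The main subtlety is ensuring $g$ descends across $p$. While $g$ preserves $F$ on attracting translates and on deeper repelling translates (where $g$ sends $g^{-n}(T_E^-)$ to $g^{-(n-1)}(T_E^-)$ for $n \ge 1$), the forward image $g(T_E^-)$ of an outermost repelling tree escapes $F$ into the adjacent core region, breaking well-definedness at the collapsed point. My plan is to first homotope $g$ to a map $g^*$ via a compactly supported proper homotopy that, for each leading repelling $E$ and a fixed base vertex $v_E \in T_E^-$, collapses $T_E^-$ to the single point $g(v_E)$; edges incident to $T_E^-$ are modified by prepending compensating paths in $g(T_E^-)$. Because the modification is supported inside $B_{-1} \cap U_E$, which is disjoint from the smaller nesting neighborhood $g^{-1}(U_E) \subsetneq U_E$ on which $g$ is unchanged, $g^*$ remains end-periodic and is properly homotopic to $g$.

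After this fix, $g^*$ sends every tree of $F$ either to another tree of $F$ or to a single vertex, so it descends to a graph map $\map[g^\dag]{\Gamma^\dag}$ satisfying $g^\dag \circ p = p \circ g^*$. Combined with the proper homotopy $g \simeq g^*$, this yields commutativity of the diagram in the lemma statement up to proper homotopy. I would verify end-periodicity of $g^\dag$ using the nesting neighborhoods $p(U_E)$ for attracting ends and $p(g^{-1}(U_E))$ for repelling ends, on which $g^\dag$ restricts to a homeomorphism of the largest subgraph with the nesting basis property inherited via $p$. Boundary-collapsedness is then immediate from \cref{prop:boundary-structure}: each component of $\bd W_{g^\dag}$ corresponds to $p(B_{\pm 1} \cap U_E)$, whose vertices are all identified to a single point by $p$. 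Finally, any residual valence-1 vertices in $\Gamma^\dag$ can be iteratively removed by collapsing free edges. The main obstacle is this homotopy modification step: careful bookkeeping will be required to confirm that end-periodicity survives the adjustment and that the compensating paths used in $g^*$ do not disrupt the homeomorphism property of $g^\dag$ on its restricted domain.
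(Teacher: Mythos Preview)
Your proposal is correct and follows essentially the same strategy as the paper: collapse a $g$-equivariant forest of maximal trees in the components of $\sg{B_n}$ for $n \neq 0$, repair the failure of $g$ to descend at the outermost repelling block by a compactly supported homotopy that crushes each tree $T_E^-$ to a point, and finally prune valence-1 vertices. The only organizational difference is that the paper packages the forest construction into the end-invariant maximal tree built in \cref{lemma:maximal-tree} (taking $F = \bigcup_{n \neq 0} T \cap \sg{B_n}$), whereas you assemble $F$ directly from translates of chosen trees $T_E^\pm$; the resulting forests and the subsequent homotopy fix are the same, and the paper likewise records the new core as $\pi(\Gamma_0 \cup B_{-1})$, matching your use of $p(g^{-1}(U_E))$ on the repelling side.
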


\begin{proof}
    Begin by fixing a core $\Gamma_0$ for $g$ and using it to construct end-invariant maximal tree $T$ as in \cref{lemma:maximal-tree}.
    We define $\Gamma'$ to be the quotient of $\Gamma$ obtained by collapsing each component of the forest $T \cap \sg{B_n}$ to a vertex for all $n \neq 0$.
    Since collapsed regions represent compact, simply connected, and pairwise disjoint subgraphs, the natural map $\map[\pi]{\Gamma}{\Gamma'}$ is both proper and a deformation retraction, so that the graph $\Gamma'$ has the same homotopy type and number of ends as the original.

    The end-invariance of $T$ ensures that $g(T \cap \sg{B_n}) = T \cap \sg{B_{n + 1}}$ for all $n \neq 0$, so the descent of $g$ to $\Gamma'$ is automatically well-defined everywhere except $\pi(\sg{B_{-1}})$.
    We can thus obtain an end-periodic map $g'$ on $\Gamma'$ by first homotoping $g$ to be constant on each component $C$ of $\sg{B_{-1}}$ before passing to the quotient.
    Choose a base vertex $v \in C$, and for every edge $e \in \edges{C - T}$, let $\gamma_e$ denote the loop $\path{v, \bd_0e}_T \cdot e \cdot \path{\bd_1e, v}_T$.
    Since $g$ maps $\gamma_e$ to a loop in $B_0$ based at $g(v)$, it is homotopic rel $\Gamma - C$ to a function $g_1$ that sends $T \cap C$ to $g(v)$ while taking each $e \in \edges{C - T}$ to $g(\gamma_e)$.
    We observe that $g_1$ is still end-periodic, with a well-chosen core now given by $\Gamma_0 \cup B_{-1}$ (the augmentation is necessary since $g_1$ need not be a homeomorphism of $B_{-1}$).
    Furthermore, $g_1$ descends to a map $\map[g']{\Gamma'}$, which must likewise be end-periodic with core $\Gamma'_0 \coloneq \pi(\Gamma_0 \cup B_{-1})$.
    Each nonzero block associated to $\Gamma'_0$ is the quotient of a nonzero block $B_n$ of $\Gamma$, and since $\pi(B_n)$ has exactly one vertex per component, it follows that $g'$ is boundary-collapsed.

    If $\Gamma'$ has no valence-1 vertices, then $g'$ is the map we are seeking.
    Otherwise, it is clear that all valence-1 vertices lie in the finite subgraph $\Gamma'_0$.
    These can be removed by iterating the following procedure.

    Supposing $v \in \Gamma'$ is a valence-1 vertex with incident edge $e$, let $\Gamma^\dag$ be the subgraph of $\Gamma'$ obtained by removing $e \cup \{v\}$, and $\map[\pi']{\Gamma'}{\Gamma^\dag}$ the deformation retraction which collapses $\cl{e}$ to the vertex opposite $v$.
    We note that the map $g^\dag$ given by the restriction of $\pi' g'$ to $\Gamma^\dag$ is end-periodic because it matches $g'$ on all but finitely many edges.
    In particular the finite subgraph $\Gamma_0^\dag \coloneq \pi'(\Gamma'_0)$ represents a core for $g^\dag$: if $\cl{\Gamma'_\pm}$ denotes the closure of the positive/negative nesting domain associated to $\Gamma'_0$, then the subgraphs $g'(\cl{\Gamma'_-})$ and $(g')^{-1}(\cl{\Gamma'_+})$, which have no valence-1 vertices by \cref{def:wellchosen.homeo}, cannot contain $v$.
    This means $(g^\dag)^{\pm 1}$ restricts to $(g')^{\pm 1}$ on $\cl{\Gamma'_\mp}$, so that $\Gamma'_+$ and $\Gamma'_-$ are nesting domains for $g^\dag$ whose union coincides with $\Gamma^\dag - \Gamma_0^\dag$.
    Consequently, $g^\dag$ is boundary-collapsed.
    Because it also forms a square diagram with $g'$ that commutes up to homotopy, we conclude $g^\dag$ satisfies all requirements of the lemma.
    Furthermore, since any remaining valence-1 vertices of $\Gamma^\dag$ are contained in $\Gamma_0^\dag$, 
    whose vertex count is one less than $\nverts{\Gamma'_0}$, this removal process eventually terminates with a map whose domain is free of valence-1 vertices.
\end{proof}

Everything is now in place to prove \cref{lemma:coupling-possible}.

\begin{proof}[Proof of \Cref{lemma:coupling-possible}]
    Let $\map[g]{\Gamma}$ be an end-periodic representative of $\zeta$.
    By \cref{lemma:collapsed-map-exists}, we may assume $g$ is boundary-collapsed, and that its domain has no valence-1 vertices.
    \Cref{thm:homotopy-inverse} gives an end-periodic map $\map[g']{\Gamma}$ whose
    compactified mapping torus $W'$ comes with a homeomorphism $\map[\varphi]{\bd_\pm W}{\bd_\mp W'}$
    that respects subdivisions of ideal joining edges but reverses their derived orientations.
    Since $g$ is boundary-collapsed, every component of the graph $\bd W$ has one vertex, and we can define a homeomorphism
    $\map[\sigma]{\bd W}$ that acts on vertices and subgraph edges by the identity while sending each joining edge to its inverse.
    The composition $\varphi \sigma$ is then isotopic to a decoration preserving map, so $g$ and $g'$ are compatible.
\end{proof}

\subsubsection{Finding an appropriate subgraph $\Theta$}
Our next task is to prove the following.

\begin{lemma}
    \label{lemma:f-homotopy-equivalence}
    Suppose $\map[g]{\Gamma}$ and $\map[g']{\Gamma'}$ are
    $h$-compatible end-periodic homotopy equivalences of graphs with no valence 1 vertices.
    Then there exist cellulations of $W = W_g$ and $W' = W_{g'}$ such that the principal subgraph
    $\Theta$ of their $h$-couple has a first return map which is a homotopy equivalence.
\end{lemma}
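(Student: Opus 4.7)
The plan is to prove $f$ is a homotopy equivalence by constructing a right homotopy inverse $\map[\bar f]{\Theta}$. Since $\Theta$ is a finite connected graph, $\pi_1(\Theta)$ is a finitely generated free group, hence Hopfian: any surjective endomorphism is an isomorphism. Producing $\bar f$ with $f \circ \bar f \homot \id_\Theta$ therefore forces $f_*$ to be surjective, hence an isomorphism, and so $f$ is a homotopy equivalence.

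To construct $\bar f$, I would use as building blocks the end-periodic homotopy inverses $\map[\hat g]{\Gamma}$ and $\map[\hat g']{\Gamma'}$ of $g$ and $g'$ supplied by \cref{thm:homotopy-inverse}, which by construction agree with $g^{-1}$ and $(g')^{-1}$ on nesting neighborhoods of their respective graphs. Cellulate $W$ and $W'$ with a uniform cutoff $m$ taken sufficiently large relative to the cores of $g$, $g'$, $\hat g$, and $\hat g'$. On the decomposition $\Theta = \Delta \cup \Delta' \cup S_+ \cup S_-$, define $\bar f = \hat g$ on $\Delta - B_{-m}$ and $\bar f = \hat g'$ on $\Delta' - B'_{-m}$; on the complementary ``wrap-around'' pieces $\cl{B_{-m} \cup S_-}$ and $\cl{B'_{-m} \cup S_+}$, let $\bar f$ be the combinatorial isomorphisms onto $\cl{B'_m \cup S_-}$ and $\cl{B_m \cup S_+}$ that invert the isomorphisms of \cref{prop:properties-of-Theta}\refitem{prop:properties-of-Theta.isomorphisms-of-lambda}.

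The main obstacle is verifying $f \circ \bar f \homot \id_\Theta$ and, in particular, ensuring that the piecewise definitions of $\bar f$ agree at their interfaces. On the wrap-around pieces the composition $f \circ \bar f$ is the identity by construction, being an isomorphism paired with its inverse. On the bulk, $f \circ \bar f$ reduces to $g \circ \hat g$ or $g' \circ \hat g'$, each homotopic to the identity by \cref{thm:homotopy-inverse}. The delicate step is the agreement at the juncture vertices separating bulk from wrap-around regions; for $m$ large these vertices lie in nesting neighborhoods where $\hat g$ and $\hat g'$ are prescribed to equal $g^{-1}$ and $(g')^{-1}$ explicitly, and the required compatibility with the wrap-around isomorphisms can then be arranged, possibly after a compactly supported adjustment of $\hat g$ or $\hat g'$ that does not affect their role as homotopy inverses. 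Assembling the local homotopies obtained on each piece then yields the desired global $f \circ \bar f \homot \id_\Theta$, completing the argument.
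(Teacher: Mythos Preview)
Your approach is correct and genuinely different from the paper's. The paper does not construct a homotopy inverse at all; instead it factors $f$ directly as a product of type-1 Stallings folds followed by a homeomorphism. The key ingredient there is \cref{lemma:folds-to-homeo}, which shows that an end-periodic homotopy equivalence of a graph without valence-1 vertices is a finite sequence of type-1 folds followed by a homeomorphism; since these folds all occur inside the cores $\Gamma_0$ and $\Gamma'_0$, they can be performed on $\Theta$ as well, and what remains is checked to be a homeomorphism using \cref{prop:properties-of-Theta}. Your route instead leverages \cref{thm:homotopy-inverse} and the Hopfian property of finitely generated free groups, which is conceptually clean and avoids the folding machinery entirely; the paper's approach, by contrast, yields a structural decomposition of $f$ that is reused elsewhere (e.g.\ \cref{cor:euler-characteristic-preserved}).

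One remark on your ``delicate step'': the interface compatibility you worry about in fact holds on the nose, with no adjustment of $\hat g$ or $\hat g'$ needed. The overlap between $\Delta - B_{-m}$ and $\cl{B_{-m} \cup S_-}$ consists of vertices $v$ that are either initial vertices of joining edges in $J_{-m}$ or $\Delta$-endpoints of edges in $S_-$; in either case $g^{-1}(v)$ is again such a vertex (one index lower), hence lies in $\cl{S_-} \cup \cl{B'_m} \subset \cl{B'_m \cup S_-}$, and since $f = g$ there, it is the unique preimage of $v$ under the wrap-around isomorphism. The analogous checks at the other three interfaces are identical. So your piecewise $\bar f$ glues to a continuous map without modification, and $f \bar f$ equals the identity outside $\Gamma_0 \cup \Gamma'_0$, where the homotopies $g\hat g \homot \id$ and $g'\hat g' \homot \id$ are supported.
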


Since $\Theta$ is a finite graph, our main tool will be Stallings folding \cite{Stallings:finiteGraphs}.
Recall that a \term{fold} is a graph map given by the natural quotient $\map[f]{\Gamma}{\Gamma / (\cl{e_1} \sim \cl{e_2})}$ identifying a pair of closed, oriented edges $e_1 \neq e_2$ with the same initial vertex $\bd_0 e_1 = \bd_0 e_2$.
Note that $f$ is a homotopy equivalence if and only if $\bd_1 e_1 \neq \bd_1 e_2$, in which case it is called a \term{type-1 fold}.
Otherwise, $f$ is a \term{type-2 fold}, which clearly fails to be $\pi_1$-injective.

Given arbitrary graphs $\gX, \gY$ and a graph map $\map[f]{\gX}{\gY}$ which does not collapse edges, 
we let $\gX^*$ denote the subdivision of $\gX$ that makes $\map[f]{\gX^*}{\gY}$ combinatorial.
If $\gX^*$ is finite, one can always write $f$ as a finite composition of combinatorial graph maps
\[
    \gX^* = \gX_1 \xrightarrow{f_1} \gX_2 \xrightarrow{f_2} \cdots \space \gX_k \xrightarrow{f_k} \gY
\]
where each $f_1, \dots, f_{k - 1}$ is a fold, and the final map $f_k$ is a unique immersion \cite{Stallings:finiteGraphs}.
When $f$ is a homotopy equivalence and $\gX$ has no valence-1 vertices, it is easy to show such an $f_k$ must also be a homeomorphism.
The first step toward proving \cref{lemma:f-homotopy-equivalence} will be demonstrating a similar fact for end-periodic graph maps, namely:

\begin{proposition}
    \label{lemma:folds-to-homeo}
    Every end-periodic graph map $\map[g]{\Gamma}$ can be written as a finite composition
    \[
        \Gamma^* = \Gam_1 \xrightarrow{g_1} \Gam_2 \xrightarrow{g_2} \cdots \space \Gam_k \xrightarrow{g_k} \Gamma
    \]
    where the first $k - 1$ maps are folds, and the final map is an immersion.
    Furthermore, if $g$ is a homotopy equivalence, and $\Gamma$ has no valence-1 vertices, then $g_k$ is a homeomorphism.
\end{proposition}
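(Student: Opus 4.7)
The plan is to reduce to a finite Stallings-folding problem. First I would fix a proper core $\Gamma_0$ via \cref{prop:big-core} and subdivide its edges to obtain a graph $\Gamma^*$ on which $g$ is combinatorial. The well-chosen condition \refitem{def:wellchosen.homeo} makes $g$ restrict to an isomorphism on each $\cl{U_E}$, so any failure of local injectivity is concentrated at vertices of $\cl{\Gamma_0}$.

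Next I would localize all folding to the finite subgraph $K^* = \cl{\Gamma_0^*} \cup B_{-1}$, noting via properness of the core that $g(K^*) \subeq L = \cl{\Gamma_0 \cup B_1}$. A quick case analysis at the remaining vertex types shows that any two distinct oriented edges sharing an initial vertex and a common $g$-image both lie in $K^*$: at positive junctures, edges of $\Gamma_0^*$ map into $\Gamma_0 \cup B_1$ while edges of $J_1$ map into $B_2$; at internal boundary vertices of $B_{-1}$, edges of $B_{-1}$ map into $\Gamma_0 \cup B_1$ while edges of $J_{-2}$ map into $B_{-1}$; and identifications entirely inside an open nesting domain are excluded by the well-chosen condition. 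The map $g|_{K^*} : K^* \to L$ is thus a combinatorial map of finite graphs, which the classical Stallings algorithm factors as $K^* = K_1 \to K_2 \to \cdots \to K_r \to L$ with the first $r-1$ arrows folds and the last an immersion. Each fold on $K^*$ extends uniquely to a fold of the ambient $\Gamma^*$, producing $\Gamma^* = \Gamma_1 \to \cdots \to \Gamma_r$ and a residual map $g_r : \Gamma_r \to \Gamma$; a vertex-by-vertex check (trivial outside the image of $K^*$, tautological inside) verifies $g_r$ is an immersion, giving the desired factorization.

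For the second claim, assume $g$ is a homotopy equivalence and $\Gamma$ has no valence-1 vertices. Subdivision preserves this property, and a fold with $\bd_1 e_1 \ne \bd_1 e_2$ merges two terminal vertices into one of valence $\deg(\bd_1 e_1) + \deg(\bd_1 e_2) - 2 \ge 2$, so every $\Gamma_i$ in the sequence is also valence-1-free. To rule out type-2 folds I would use that the composition of folds $\Gamma^* \twoheadrightarrow \Gamma_i$ is $\pi_1$-surjective: a hypothetical type-2 fold would kill the bigon class $[e_1 \cdot e_2^{-1}] \in \pi_1(\Gamma_i)$, which pulls back to some nontrivial $\alpha \in \pi_1(\Gamma^*)$; since $g_*(\alpha) = [g(e_1) \cdot g(e_2)^{-1}] = 1$, this contradicts injectivity of $g_*$. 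Hence every fold is type-1, the composition of folds is a homotopy equivalence, and $g_k$ is a proper $\pi_1$-iso immersion.

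The main obstacle I expect is this closing step: upgrading a proper, $\pi_1$-injective immersion between connected locally finite graphs without valence-1 vertices to a homeomorphism. In the finite-graph setting one completes the immersion to a covering of $\Gamma$, uses the $\pi_1$-iso to force the cover to be trivial, and observes that the complementary subgraph $\Gamma \setminus \Gamma_k$ would otherwise contribute valence-1 vertices. The infinite-graph case is subtler because the complementary forest could in principle be leafless (e.g.\ a bi-infinite ray attached at a single point), so naive completion does not directly force surjectivity. To handle this I would lift to universal covers: the immersion lifts to a $\pi_1(\Gamma)$-equivariant immersion of trees, which is automatically an embedding, and $\pi_1$-iso together with equivariance forces the image tree to coincide with the target. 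Descending through the group quotient then recovers $\Gamma_k = \Gamma$ and hence the homeomorphism.
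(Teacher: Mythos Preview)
Your reduction to a finite Stallings problem on the core, extended by the identity outside, is the paper's approach. The gap is in the closing universal-cover step. The lift $\widetilde{g_k}\colon \widetilde{\Gamma_k} \to \widetilde{\Gamma}$ is indeed an embedding of trees, and this (together with equivariance and $\pi_1$-surjectivity of $(g_k)_*$) does yield injectivity of $g_k$. But your assertion that ``$\pi_1$-iso together with equivariance forces the image tree to coincide with the target'' is false without further input: take $\Gamma$ to be a circle with an infinite ray attached and $\Gamma_k$ the circle alone; the inclusion is a proper $\pi_1$-iso immersion of leafless graphs whose equivariant lift embeds the axis $\mathbb{R}$ properly inside $\mathbb{R}$-with-rays-at-every-integer. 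This is exactly the leafless-complement obstruction you yourself flagged, and passing to covers does not dissolve it. The missing ingredient is the end structure---that $g_k$ agrees with a homeomorphism outside a compact set, so the image is cocompact---but your sketch never invokes this, and even granting it you would still need an argument ruling out a finite leafless complement.

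The paper avoids covers entirely and argues surjectivity and injectivity of $g_k$ directly. For surjectivity: each edge $e$ of $\Gamma$ either lies on an immersed circle, in which case $e \notin g(\Gamma)$ would force $g_*$ to factor through the proper subgroup $\pi_1(\Gamma - e)$, or else $e$ separates $\Gamma$ with one side acyclic; the no-valence-1 hypothesis forces that side to contain an end, and since $g$ permutes ends the image meets both components of $\Gamma - e$, so $e \subseteq g(\Gamma)$ by connectedness. For injectivity: if $g_k(u)=g_k(v)$ with $u \neq v$, a reduced path $\rho$ from $u$ to $v$ maps under the immersion $g_k$ to a nontrivial reduced loop; choosing a reduced loop $\gamma$ at $u$ with $(g_k)_*[\gamma]=[g_k\rho]^{-1}$ and cancelling the reduced-path concatenation $g_k\rho \cdot g_k\gamma$ down to the constant loop forces two distinct edges at a common vertex of $\Gamma_k$ to share a $g_k$-image, contradicting the immersion property.
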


\begin{proof}
    Since $g$ is end-periodic, it restricts to an isomorphism everywhere outside some compact core $C$ 
    (we temporarily put aside the notation $\Gamma_0$ avoid confusing folded graphs $\Gamma_n$ with enlargements).
    Thus, $\map[\restr{g}{C} = f]{C^*}{g(C)}$ is a combinatorial map of finite graphs which factors as the composition
    \[
        C^* = C_1 \xrightarrow{f_1}  C_2 \xrightarrow{f_2} \cdots \: C_k \xrightarrow{f_k} g(C),
    \]
    where $f_k$ is an immersion and the others are type-1 folds.
    Extend $f_1$ over $\Gamma^* \eqcolon \Gam_1$ by the identity to define $\map[g_1]{\Gam_1}{\Gam_2}$, where $\Gam_2 \coloneq g_1(\Gam_1)$.
    Continuing inductively, one obtains a sequence $\Gam_1 \xrightarrow{g_1} \Gam_2 \xrightarrow{g_2} \cdots \space \Gam_k$ 
    in which each $g_n$ is the extension of $f_n$ by the identity and therefore a fold.
    Because $\Gam_n - C_n = \Gamma - C$ for all $n$, and $g$ restricted to $\Gamma - C$ is a homeomorphism onto its image,
    the extension of $f_k$ to $\Gam_k$ by $g$, which we denote $\map[g_k]{\Gam_k}{\Gamma}$, must be an immersion satisfying $g = g_k \cdots g_1$.

    Suppose now that $\map[g]{\Gamma}$ is a homotopy equivalence and $\Gamma$ has no valence-1 vertices.
    Our claim is that under such circumstances, $g_k$ must be a homeomorphism.

    We start by proving $g_k$ is a surjection, which amounts to showing $g$ is onto since the images $g_k(\Gam_k) = g(\Gamma)$ coincide.
    By continuity, it is enough to verify $g(\Gamma) \subeq \Gamma - \zskele\Gamma$.
    Every open edge $e \in \edges{\Gamma}$ either lies on a cyclically reduced loop (that is, an immersed circle) in $\Gamma$, 
    or its removal results in the separation of $\Gamma$ into two connected subgraphs, at least one of which is acyclic.
    In the first case, the image of $g$ must contain $e$ because $g$ is a homotopy equivalence.
    Otherwise, any acyclic component $A$ of $\Gamma - e$ contains a neighborhood of least one end, because $\Gamma$ has no valence-1 vertices by assumption.
    As $g$ acts on $\Ends(\Gamma)$ by permutation, $g(\Gamma) \cap A$ must be nonempty, 
    and seeing that $g(\Gamma)$ also intersects cyclic components by the first case, 
    it follows that the image of $g$ meets both components of $\Gamma - e$.
    Thus, $e \subeq g(\Gamma)$ by the continuity of $g$ and connectedness of $\Gamma$.

    Now, if $g_k$ failed to be injective, we could find distinct vertices $u, v \in \verts{\Gam_k}$ such that $g_k(u) = g_k(v) = w \in \verts{\Gamma}$, as well as a reduced edge path $\rho = e_1 \mathbin{\cdots} e_n$ in $\Gam_k$ between them. 
    The composition $g_k \rho$ is a loop in $\Gamma$ based at $w$, which can be written $g_k(e_1) \mathbin{\cdots} g_k(e_n)$ since $g_k$ is combinatorial.
    As $g_k$ is an immersion, the path $g_k \rho$ is also reduced, and in particular cannot be null-homotopic.
    Thus, $1 \neq [g_k \rho] \in \pi_1(\Gamma, w)$, and since $g_k$ is a homotopy equivalence, there exists $[\gamma] \in \pi_1(\Gam_k, u)$ such that $(g_k)_*([\gamma]) = [g_k \rho]^{-1}$.
    Writing $\gamma$ as a reduced edge path $d_1 \mathbin{\cdots} d_m$ with $\bd_0 d_1 = \bd_1 d_m = u$, the composition $g_k \gamma = g_k(d_1) \mathbin{\cdots} g_k(d_r)$ is then a reduced representative of $(g_k)_*([\gamma])$ satisfying $[g_k \rho] [g_k \gamma] = 1$.
    It follows that the concatenation
    \[
        g_k \rho \cdot g_k \gamma = g_k(e_1) \mathbin{\cdots} g_k(e_n) \cdot g_k(d_1) \mathbin{\cdots} g_k(d_m)
    \]
    can be homotoped to a constant path by successively removing subpaths of the form $e \cdot e^{-1}$.
    Because $g_k \rho$ and $g_k \gamma$ are both reduced, all cancellations must occur at the interface between them, implying that $m = n$, and $g(e_{n - i}) = g(d_i)^{-1} = g(d_i^{-1})$ for all $i = 1, \dots, n$.
    But since $\gamma$ is a loop while $\rho$ is not, there must also be edges $e_{n - i} \neq d_i^{-1}$, which share a terminal vertex when $i$ is minimal.
    These map to the same edge under $g_k$, contradicting the fact that it is an immersion.
\end{proof}

We pause for a moment to observe a corollary of the previous proposition, 
which points out a structural characteristic of mapping tori of end-periodic homotopy equivalences.

\begin{corollary}
    \label{cor:euler-characteristic-preserved}
    If $\map[g]{\Gamma}$ is an end-periodic homotopy equivalence, 
    then the positive and negative boundaries of $W_g$ have the same Euler characteristic: $\chi(\bd_+ W_g) = \chi(\bd_- W_g)$ .
\end{corollary}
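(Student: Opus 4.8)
The plan is to express $\chi(\bd_\pm W_g)$ through the blocks of a decomposition of $\Gamma$ and then to exploit that $g$, being a homotopy equivalence, preserves the Euler characteristic of each finite enlargement $\Gamma_n$.

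First I would fix a proper core $\Gamma_0$ for $g$ (one exists by \cref{prop:big-core}), with induced block decomposition $\seq[n\in\ZZ]{B_n}$; since $W_g$ is intrinsic to $g$ and $B_{\pm n}\homeo B_{\pm1}$ for every $n$, the Euler characteristics below will not depend on this choice. From the proof of \cref{prop:boundary-structure}, $\bd_+ W_g$ is a finite graph with $\nverts{B_1}$ vertices and $\nedges{B_1}$ edges, and similarly for $\bd_- W_g$, so
\[
  \chi(\bd_+ W_g)=\nverts{B_1}-\nedges{B_1},\qquad \chi(\bd_- W_g)=\nverts{B_{-1}}-\nedges{B_{-1}}.
\]
For $n\ge0$ let $\Gamma_n=\bigcup_{i=-n}^{n}B_i$ be the $n$th enlargement, a finite connected subgraph. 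Since a nonzero block meets the blocks of smaller index only along the initial vertices of its joining edges (the standard orientation of \cref{sec:standard-orientation}), adjoining $B_n$ and then $B_{-n}$ to $\Gamma_{n-1}$ contributes exactly $\nverts{B_1}-\nedges{B_1}$ and $\nverts{B_{-1}}-\nedges{B_{-1}}$ to the Euler characteristic, whence
\[
  \chi(\Gamma_n)=\chi(\Gamma_{n-1})+\big(\nverts{B_1}-\nedges{B_1}\big)+\big(\nverts{B_{-1}}-\nedges{B_{-1}}\big).
\]
On the other hand, using that $\Gamma_0$ is proper, i.e.\ $g(B_{-1}\cup\Gamma_0)=\Gamma_0\cup B_1$, together with $g(B_n)=B_{n+1}$ for $n\ge1$ and $g(B_{-n})=B_{-(n-1)}$ for $n\ge2$, one finds $g(\Gamma_n)=\bigcup_{j=-(n-1)}^{n+1}B_j=\Gamma_{n-1}\cup B_n\cup B_{n+1}$, and the same attaching count gives
\[
  \chi\big(g(\Gamma_n)\big)=\chi(\Gamma_{n-1})+2\big(\nverts{B_1}-\nedges{B_1}\big).
\]

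The crux is to identify $\chi(\Gamma_n)$ with $\chi\big(g(\Gamma_n)\big)$, for which it suffices to show the restriction $\map[\restr{g}{\Gamma_n}]{\Gamma_n}{g(\Gamma_n)}$ is a homotopy equivalence, and here I would invoke \cref{lemma:folds-to-homeo}. Because $\Gamma_0$ is a proper core, $g$ restricts to an isomorphism onto its image on $\Gamma-\Gamma_0$, so the factorization $g=g_k\circ\cdots\circ g_1$ supplied by that proposition may be taken with all of its folds supported inside $\Gamma_0$. As $g$ is a homotopy equivalence, $g_*$ is an isomorphism; since $(g_1)_*,\dots,(g_{k-1})_*$ are surjective while $(g_k)_*$ is injective, a descending argument forces each $g_i$ with $i<k$ to be a type-1 fold and $(g_k)_*$ to be an isomorphism. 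The injectivity argument in the proof of \cref{lemma:folds-to-homeo}, which needs only that $g_k$ is an immersion inducing a $\pi_1$-isomorphism onto its image $g(\Gamma)$, then shows $g_k$ is a homeomorphism onto $g(\Gamma)$. Restricting $g=g_k\circ\cdots\circ g_1$ to $\Gamma_n$ (which contains $\Gamma_0$, hence every folded edge) now exhibits $\restr{g}{\Gamma_n}$ as a composition of type-1 folds and a homeomorphism onto its image, so it is a homotopy equivalence and $\chi(\Gamma_n)=\chi\big(g(\Gamma_n)\big)$.

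Comparing the two displays and cancelling $\chi(\Gamma_{n-1})$ yields $\nverts{B_{-1}}-\nedges{B_{-1}}=\nverts{B_1}-\nedges{B_1}$, that is $\chi(\bd_- W_g)=\chi(\bd_+ W_g)$. I expect the middle step, showing that the restriction of $g$ to $\Gamma_n$ remains a homotopy equivalence, to be the main obstacle: this is precisely where \cref{lemma:folds-to-homeo} and the confinement of the non-injectivity of $g$ to a proper core are indispensable. The Euler-characteristic bookkeeping on each side is routine once one tracks that every block is attached along the initial vertices of its joining edges.
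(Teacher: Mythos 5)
Your strategy is essentially the paper's: fix a proper core, use \cref{prop:boundary-structure} to write $\chi(\bd_\pm W_g) = \nverts{B_{\pm 1}} - \nedges{B_{\pm 1}}$, factor $g$ into type-1 folds followed by an embedding so that $g$ preserves the Euler characteristic of finite subgraphs, and compare a finite subgraph containing the core with its image. (The paper compares $\Gamma_0 \cup B_{-1}$ with $g(\Gamma_0 \cup B_{-1}) = \Gamma_0 \cup B_1$; your recursion with $\Gamma_n$ versus $g(\Gamma_n)$ is the same computation after cancelling an extra copy of $\chi(B_1) + \chi(B_{-1})$.)

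The gap is the valence-1 case, which the corollary does not exclude. You correctly observe that the \emph{injectivity} half of the proof of \cref{lemma:folds-to-homeo} does not use the no-valence-1-vertices hypothesis, so $g_k$ is still a homeomorphism onto its image. But your computation of $\chi(g(\Gamma_n))$ rests on the set-level identity $g(B_{-1}\cup\Gamma_0)=\Gamma_0\cup B_1$, i.e.\ on \emph{surjectivity} of $g$ onto $\Gamma_0\cup B_1$, and surjectivity is precisely the half of that lemma which does use the hypothesis. It can genuinely fail: let $\Gamma$ be a line with a loop at each integer and one extra edge $e$ hanging off a valence-1 vertex at $0$, and let $g$ translate by one while collapsing $e$ to a point. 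This is an end-periodic homotopy equivalence whose image misses $e$, so $g(\Gamma_n)\subsetneq\Gamma_{n-1}\cup B_n\cup B_{n+1}$ and your second display is unjustified. (In this example the defect is a hanging tree and happens not to affect $\chi$, but your proof neither observes nor proves that $\Gamma - g(\Gamma)$ always consists of such harmless pieces.) The paper closes the gap differently: it first replaces $g$ by the boundary-collapsed map $g^\dag$ of \cref{lemma:collapsed-map-exists}, whose domain has no valence-1 vertices, runs the fold argument there, and then notes that each component of $\bd_\pm W_{g^\dag}$ is obtained from the corresponding component of $\bd_\pm W_g$ by collapsing a tree, so the Euler characteristics agree. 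You need either that reduction or a separate argument that the complement of $g(\Gamma)$ in $\Gamma$ is a disjoint union of finite trees each meeting $g(\Gamma)$ in a single vertex.
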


\begin{proof}
    Fix a proper core $\Gamma_0$ for $g$.
    By \cref{prop:boundary-structure}, $\chi(\bd_\pm W_g) = \nverts{B_{\pm 1}} - \nedges{B_{\pm 1}}$, so we observe
    \begin{align*}
        \chi(\Gamma_0 \cup B_{\pm 1}) & = \nverts{\Gamma_0 \cup B_{\pm 1}} - \nedges{\Gamma_0 \cup B_{\pm 1}}             \\
                                      & = \nverts{\Gamma_0} + \nverts{B_{\pm 1}} - \nedges{\Gamma_0} - \nedges{B_{\pm 1}} \\
                                      & = \chi(\Gamma_0) + \chi(\bd_\pm W_g).
    \end{align*}
    Suppose initially that $\Gamma$ has no valence-1 vertices.
    Then, since $g$ is a homotopy equivalence, it factors as $g = g_k \cdots g_1$ where $g_1, \dots, g_{k - 1}$ are type-1 folds and $g_k$ is a homeomorphism.
    All of these maps preserve Euler characteristic, so $\chi(K) = \chi(g(K))$ for any finite subgraph $K \subeq \Gamma$.
    Since $\Gamma_0$ is a proper core, we have $g(\Gamma_0 \cup B_{-1}) = \Gamma_0 \cup B_1$.
    This implies $\chi(\Gamma_0 \cup B_{-1}) = \chi(\Gamma_0 \cup B_1)$, which yields the desired equality when combined with the equation above.

    Now, if $\Gamma$ does have valence-1 vertices, we can instead consider the end-periodic map $\map[g^\dag]{\Gamma^\dag}$ that results from collapsing $g$ as in \cref{lemma:collapsed-map-exists}.
    Note that this construction allows us to assume $\Gamma^\dag$ is free of valence-1 vertices, so we have $\chi(\bd_+ W_{g^\dag}) = \chi(\bd_- W_{g^\dag})$ by the previous case.
    Since each component of $\bd_\pm W_{g^\dag}$ corresponds to a component of $\bd_\pm W_g$ with a maximal subtree collapsed, we see at once that $\chi(\bd_\pm W_g) = \chi(\bd_\pm W_{g^\dag})$, and the conclusion follows.
\end{proof}

With \cref{lemma:folds-to-homeo} established, we can give the proof of \cref{lemma:f-homotopy-equivalence}.

\begin{proof}[Proof of \Cref{lemma:f-homotopy-equivalence}]
    Let $\Gamma_0$ and $\Gamma'_0$ be cores for $g$ and $g'$ inducing $h$-compatible cellulations of $W$ and $W'$.
    By \cref{prop:big-core}, we can enlarge $\Gamma_0$ if necessary to ensure it is a proper core for $g$.
    Since the positive and negative blocks of the decomposition induced by this enlargement are isomorphic to the blocks $B_{\pm 1}$ associated to the original core, 
    it fixes the same decoration of $\bd W$, so that $h$ remains decoration preserving with respect to this new cellulation of $W$.
    Since the decoration is also invariant under choice of boundary neighborhoods, we can require that $W$ be cellulated with a cutoff of at least 2 everywhere.
    Thus, by \cref{prop:rebase-interior-subgraph}, we may assume with no loss of generality that the interior subgraph $\Delta$ of $W$ is exactly $\Gamma_2 = \bigcup_{i = - 2}^2 B_i$.
    In the same way, we assume $\Gamma'_0$ is a proper core for $g'$ such that $\Delta' = \Gamma'_2 = \bigcup_{i = - 2}^2 B'_i$.
    The resulting $h$-couple $M(W, W', h)$ has a principal subgraph $\Theta = \Delta \cup \Delta' \cup S_+ \cup S_-$ whose structure 
    is similar to the schematic in \cref{fig:Theta-schematic}.
    \begin{figure}[htpb]
        \centering
        \includegraphics[]{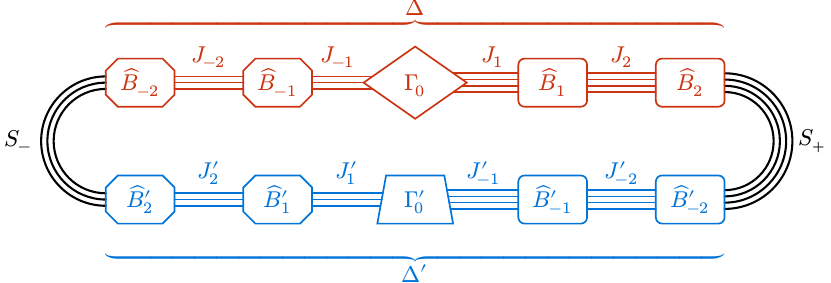}
        \caption{
            Schematic depiction of $\Theta$. 
            This cartoon depicts a scenario where all ends of $\Gamma$ have period 1, 
            though in general $S_\pm$ may also contain edges between vertices of $\bd_\pm \Gamma_0 \cup \widehat{B}_{\pm 1}$ 
            and $\bd_\mp \Gamma'_0 \cup \widehat{B}'_{\mp 1}$.
        }
        \label{fig:Theta-schematic},
    \end{figure}

    Letting $f$ denote the first return map of $\Theta$ under the extended semiflow, 
    \cref{prop:properties-of-Theta.f-matches-g} shows that $\restr{f}{\Delta - B_2} = \restr{g}{\Delta - B_2}$.
    Since $\Gamma_0$ is a proper core for $g$, it follows that $g(B_{-1} \cup \Gamma_0 \cup B_1) = \Gamma_0 \cup B_1 \cup B_2$. 
    Furthermore, as an end-periodic homotopy equivalence of a graph with no valence-1 vertices, \cref{lemma:folds-to-homeo} implies $g = g_k g_{k - 1} \dots g_1$, 
    where $g_k$ is a homeomorphism onto $\Gamma$, and the rest are all type-1 folds.
    Setting $G_i \coloneq g_{i-1} \cdots g_1$ with $G_0 \coloneq \id_\Gamma$ allows us to express the domain and codomain of these functions as $\map[g_i]{G_{i}(\Gamma)}{G_{i+1}(\Gamma)}$.
    Now, because $g$ is a homeomorphism everywhere outside $\Gamma_0$, and this core contains the image of every edge mapped non-combinatorially, 
    it follows by induction that every fold $g_i$, $1 \le i < k$ must identify two edges in $G_i(\Gamma_0)$.
    Thus, we obtain a folding sequence for $f$ which mirrors that of $g$ by defining functions
    $\map[f_i]{F_i(\Theta)}{F_{i+1}(\Theta)}$ (with $F_0 \coloneq \id_\Theta, F_i \coloneq f_{i - 1} \cdots f_1$), 
    which match $g_i$ on $F_i(\Gamma_0)$, and restrict to the identity elsewhere on $F_i(\Theta)$.
    This gives $f = f_k f_{k - 1} \cdots f_1$, where the first $k - 1$ functions are type-1 folds, 
    and the restriction of $f_k$ to $F_k(B_{-1} \cup \Gamma_0 \cup B_{1})$ is a homeomorphism onto $\Gamma_0 \cup B_1 \cup B_2$.

    Noting that $\restr{f}{\Delta' - B'_2} = \restr{g'}{\Delta' - B'_2}$, 
    the same procedure also allows us to express $f$ as $f = f'_{k'} f'_{k' - 1} \cdots f'_1$, 
    where each $f'_i$, $1 \le i < k'$ is a type-1 fold of two edges in $F'_i(\Gamma'_0)$, and $f'_{k'}$ restricts to a homeomorphism 
    $F'_{k'}(B'_{-1} \cup \Gamma'_0 \cup B'_1) \to \Gamma'_0 \cup B'_1 \cup B'_2$.
    Since the folded regions are disjoint, we can perform folds $f'_1$ through $f'_{k' - 1}$ immediately after $f_1$ through $f_{k - 1}$ in order to factor $f$ as
    \[
        f = f^* F'_{k'} F_k = f^* f_\ell^* \cdots f_1^*, 
        \quad \text{where} 
        \quad \ell = k + k',
        \quad f_i^* = 
        \begin{cases}
            f_i             & \text{if } 1 \le i < k \\
            f'_{i - k + 1}  & \text{if } k \le i < \ell.
        \end{cases}
    \]
    The map $f^*$ acts as $f_k$ on 
    $f_\ell^* \cdots f_1^*(B_{-1} \cup \Gamma_0 \cup B_{1})$ and as $f'_{k'}$ on $f_\ell^* \cdots f_1^*(B'_{-1} \cup \Gamma'_0 \cup B'_1)$, 
    which is to say a homeomorphism onto $\Gamma_0 \cup B_1 \cup B_2$ and $\Gamma'_0 \cup B'_1 \cup B'_2$ respectively.
    On the subgraphs $\Pi_+ \coloneq \cl{B_2 \cup S_+ \cup B'_{-2}}$ and $\Pi_- \coloneq \cl{B'_2 \cup S_- \cup B_{-2}}$, 
    which are unaffected by the initial folds, 
    $f^*$ restricts to $f$ itself.
    Our last order of business will be to demonstrate that $\Pi_+$ and $\Pi_-$ are respectively 
    isomorphic to $\cl{S_+ \cup B'_{-2} \cup B'_{-1}}$ and $\cl{S_- \cup B_{-2} \cup B_{-1}}$ under $f$.
    This claim, once established, shows $f^*$ must be a homeomorphism onto $\Theta$.
    Thus, all functions in the decomposition of $f$ induce isomorphisms of $\pi_1$, proving the lemma.

    The proof that $f$ is an isomorphism between $\Pi_1$ and the closure of $S_+ \cup B'_{-2} \cup B'_{-1}$ 
    is in fact an easy consequence of \cref{prop:properties-of-Theta}.
    By \ref{prop:properties-of-Theta.f-matches-g}, we already know that $f$ restricts to $g'$ on $\cl{B'_{-2}}$, and is therefore an isomorphism onto $\cl{B'_{-1}}$ by end-periodicity.
    At the same time, $f$ is an isomorphism $\cl{B_2 \cup S_+} \to \cl{B'_{-2} \cup S_+}$ by \ref{prop:properties-of-Theta.isomorphisms-of-lambda}.
    Together, these imply $\restr{f}{\Pi_+}$ is an isomorphism with the desired image.
    The same holds for $\restr{f}{\Pi_-}$ by direct analogy, so we have our result.
\end{proof}

\subsubsection{\texorpdfstring{$\pi_1$}{pi\_1}-injectivity of the boundary}

Our final lemma is straightforward. We must show:

\begin{lemma}
    \label{lemma:boundary-inclusion-pi1-injective}
    If the end-periodic map \(\map[g]{\Gamma}\) induces an injection on \(\pi_1(\Gamma)\), 
    then the inclusion \(\Sigma_E \to W\) of each boundary component \(\Sigma_E \subeq \bd_\pm W\) is also \(\pi_1\)-injective.
\end{lemma}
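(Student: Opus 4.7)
The plan is to reduce, in three stages, to showing that a natural sub-HNN extension embeds in the ascending HNN presentation of \(\pi_1(W)\). Without loss of generality take \(E\) to be attracting (the repelling case follows by replacing \(g\) with \(g^{-1}\)), and let \(V_E\) denote the component of \(\overline{W_+}\) containing \(\Sigma_E\).

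First, I would argue that the forward semiflow provides a deformation retraction \(H \colon V_E \times [0,1] \to V_E\) onto \(\Sigma_E\), defined by \(H(x, t) = \psi_g(x, t/(1-t))\) for \(t < 1\) and \(H(x, 1) = \psigoo(x)\); continuity at \(t = 1\) is exactly what the definition of neighborhoods of ideal points in the compactification supplies. This gives \(\pi_1(\Sigma_E) \cong \pi_1(V_E)\), so it suffices to show that \(V_E \hookrightarrow W\) is \(\pi_1\)-injective. Next, I would identify \(V_E \setminus \Sigma_E\) with the mapping torus of \(g|_{\Omega_E}\), where \(\Omega_E = \bigsqcup_{0 \le i < \pd{E}} g^i(U_E)\) collects the nesting neighborhoods in the \(g\)-orbit of \(E\); since \(g\) cyclically permutes the \(\pd{E}\) components and takes \(U_E\) strictly into itself after \(\pd{E}\) iterates, this mapping torus has fundamental group \(\pi_1(U_E) *_{g_*^{\pd{E}}}\), an ascending HNN extension. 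A cellular inventory shows that the ideal 1-cells and 2-cells added during compactification contribute equal numbers of new generators and relations, with each ideal edge eliminable via its adjoining ideal 2-cell; hence the inclusion \(V_E \setminus \Sigma_E \hookrightarrow V_E\) induces an isomorphism on \(\pi_1\).

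Finally, since \(g_*\) is injective by hypothesis, \(\pi_1(W) \cong \pi_1(Z) = \pi_1(\Gamma) *_{g_*}\) is itself an ascending HNN extension. The subgraph inclusion \(U_E \hookrightarrow \Gamma\) yields an injection on \(\pi_1\) compatible with \(g_*^{\pd{E}}\), giving a natural homomorphism \(\pi_1(U_E) *_{g_*^{\pd{E}}} \to \pi_1(\Gamma) *_{g_*}\) that identifies the sub-HNN's stable letter with \(t^{\pd{E}}\). Appealing to the Bass--Serre tree of the ambient HNN—where \(\pi_1(U_E)\) stabilizes the base vertex and \(t^{\pd{E}}\) translates along an axis—one concludes this homomorphism is an embedding (this is a standard fact: if \(B \le A\) and \(\phi^n(B) \subseteq B\), then \(\langle B, t^n \rangle \le A *_\phi\) is isomorphic to \(B *_{\phi^n|_B}\)). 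Chaining the three identifications produces the desired embedding \(\pi_1(\Sigma_E) \hookrightarrow \pi_1(W)\).

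The technical heart of the argument is the last step: verifying this sub-HNN injection amounts to showing that Britton-reduced words in \(\pi_1(U_E) *_{g_*^{\pd{E}}}\) remain reduced in \(\pi_1(\Gamma) *_{g_*}\) after substituting \(s = t^{\pd{E}}\). This is a bookkeeping exercise sensitive to when letters of \(\pi_1(\Gamma)\) lie in \(g_*(\pi_1(\Gamma))\), and it is where the injectivity hypothesis on \(g_*\) is genuinely used.
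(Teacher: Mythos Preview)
Your argument is correct and takes a genuinely different route from the paper. The paper's proof is more direct and elementary: it introduces a cohomology class $\lambda \in H^1(W;\ZZ)$ dual to the principal subgraph $\Lambda$, recording signed intersection number with $\Lambda$. For a loop $\gamma$ in $\Sigma_E$ with $\lambda([\gamma]) \ne 0$, nontriviality in $H_1(W)$ already forces nontriviality in $\pi_1(W)$; when $\lambda([\gamma]) = 0$, the paper lifts $\gamma$ along $\psigoo$ to a path in $\Gamma_+$ which closes up precisely because the winding number vanishes, and then invokes injectivity of $g_*$ on that loop directly via the HNN relations of $\pi_1(Z)$. Your approach instead gives a structural identification of $\pi_1(\Sigma_E)$ as the ascending HNN extension $\pi_1(U_E) \ast_{g_*^{\pd{E}}}$ and embeds it into $\pi_1(\Gamma) \ast_{g_*}$ via the sub-HNN fact you cite. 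What your route buys is an explicit algebraic description of $\pi_1(\Sigma_E)$ that may be independently useful; what the paper's route buys is brevity and the avoidance of any Britton normal-form verification. One small remark on your step~(3): the cellular inventory should also account for the ideal $0$-cells $v_\infty^j$ and ideal vertical $1$-cells $t_\infty^j$, which arrive in matched pairs as contractible whiskers and so do not affect $\pi_1$; once those are dispatched, your $e_\infty^i/f_\infty^i$ elimination goes through as stated.
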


\begin{proof}
    Let $\Sigma_E$ be a component of the positive boundary $\bd_+ W$ (the proof we give has a natural analog for components of $\bd_- W$).
    Fix a cellulation of $W$, and let $\Lambda$ denote the corresponding principal subgraph.
    We will make the simplifying assumption that $\Sigma_E$ is a rose with vertex $v_\infty$ and edges $e_\infty^1, \dots, e_\infty^k$ in standard orientation.
    This streamlines notation considerably, since it allows us to view each edge as a generator of $\pi_1(\Sigma_E)$.
    The general argument can be obtained by fixing an end-invariant maximal tree 
    $T$ and considering a generating set for $\pi_1(\Sigma_E)$ corresponding to edges of $\Sigma_E - (\sg{B_1} \cap T)$.

    Since the nicely embedded graph $\Lambda$ has an oriented $[0, 1]$-bundle neighborhood in $W$ determined by the semiflow, 
    it is dual to a cohomology class $\lambda \in H^1(W; \ZZ)$ which evaluates elements of $H_1(W)$ 
    by counting the signed intersections of $\Lambda$ with transverse representative 1-cycles.
    Since $\Lambda \cap \Sigma_E$ consists of exactly $\pd{E}$ points along each ideal joining edge, we see
    \[
        \lambda([e_\infty^i]) = \begin{cases}
            0   & \text{if } e_\infty^i \text{ is a subgraph edge} \\
            - \pd{E} & \text{if } e_\infty^i \text{ is a joining edge}.
        \end{cases}
    \]
    Note that the minus sign arises because $\Sigma_E$ lies in the positive boundary, per \cref{rmk:derived-orientations}.

    The preimages of $v_\infty$ and $e_\infty^i$ under $\psigoo$ are sets of disjoint vertices and edges in 
    $\Gamma_+$ which we denote by $\seq[n \ge 1]{v_n}$ and $\seq[n \ge 1]{e_n^i}$ respectively.
    Since $\psi_g^\infty$ induces homeomorphisms $\St(v_n, \Gamma_+) \homeo \St(v_\infty, \Sigma_E)$ and $e_n^i \homeo e_\infty^i$,
    we see that for each $n \ge 1$ there is a unique lift of the ideal edge $e_\infty^i$ to an edge $e_m^i$ of $\Gamma_+$ beginning at $v_n$.
    Provided that $n > \pd{E}$, this lift has terminal vertex $v_n$ if $e_\infty^i$ is a subgraph edge, 
    $v_{n + \pd{E}}$ if $e_\infty^i$ is a joining edge in standard orientation, and $v_{n - \pd{E}}$ otherwise.
    Since this terminal vertex represents a basepoint for lifting subsequent edges, 
    it follows by induction that any closed edge loop $\gamma$ in $\Sigma_E$ can be lifted to an edge path $\wt{\gamma}$ in $\Gamma_+$ 
    beginning at $v_n$ and ending at $v_{n - \lambda([\gamma])}$ for all $n > \lambda([\gamma])$.

    With this established, $\pi_1$-injectivity of $\Sigma_E \to W$ easily follows.
    Let $\gamma$ be a nontrivial loop in $\Sigma_E$.
    If $\lambda([\gamma]) \neq 0$, then $\gamma$ must be essential in $W$, since its homotopy class lies outside the kernel of $\pi_1(W) \to H_1(W) \xrightarrow{\lambda} \ZZ$.
    Otherwise, $\gamma$ is homotopic to a nontrivial loop $\wt{\gamma}$ in $\Gamma$, and the $\pi_1$-injectivity of $g$ implies $g_*^k ([\, \wt{\gamma} \,]) \neq 1$ for all positive $k$.
    Since relations in $\pi_1(W)$ are of the form $t \alpha t^{-1} = g_*(\alpha)$ for $\alpha \in \pi_1(\Gamma)$, 
    this implies $\wt{\gamma}$ is essential in the mapping torus.
\end{proof}

\bibliographystyle{amsalpha}
\bibliography{refs}

\end{document}